\newcommand{\eref}[1]{(\ref{#1})}
\theoremstyle{plain}
\newtheorem{Thm}{Theorem}[section]
\newtheorem{Prop}[Thm]{Proposition}
\newtheorem{Lem}[Thm]{Lemma}
\newtheorem*{Claim}{Claim}
\newtheorem{Cor}[Thm]{Corollary}
\theoremstyle{remark}
\newtheorem{Rem}[Thm]{Remark}
\newtheorem{Exa}[Thm]{Example}
\def\real{{\mathbb R}}
\def\rational{{\mathbb Q}}
\def\C{\mathcal{C}}
\def\O{\mathcal{C}}
\def\P{\mathcal{P}}
\def\S{\mathcal{U}}
\def\U{\mathcal{U}}
\def\ome{\tilde{\om}}
\def\a{a_J}
\def\ta{\tau}
\def\I{I}
\def\i{[0,\delta]}
\def\ii{[0,\vep]}
\def\T{\tilde{T}}
\def\const{\operatorname{const}}
\def\closure{\operatorname{closure}}
\def\ind{\operatorname{symb}}
\def\dist{\operatorname{dist}}
\def\inte{\operatorname{int}}
\def\Lip{\operatorname{Lip}}
\def\max{\operatorname{max}}
\def\min{\operatorname{min}}
\def\mod{\operatorname{mod}}
\def\sign{\operatorname{sign}}
\def\supp{\operatorname{supp}}
\def\al{\alpha}
\def\be{\beta}
\def\vep{\varepsilon}
\def\om{\omega}
\def\la{\lambda}
\def\La{\Lambda}
\def\ka{\kappa}
\begin{document}

\title[Typical points]
{Typical points for one-parameter families of piecewise expanding maps of the interval}
\author[Daniel Schnellmann]{}
\subjclass{Primary: 37E05; 37A05; Secondary: 37D20.}

\email{daniel.schnellmann@ens.fr}

\thanks{Research supported by the Swedish Research Council and grant KAW 2005.0098 from the Knut and 
Alice Wallenberg Foundation.}

\maketitle

\centerline{\scshape Daniel Schnellmann}
\medskip
{\footnotesize
   \centerline{Ecole Normale Sup\'erieure}
   \centerline{D\'epartment de math\'ematiques et applications (DMA)} 
   \centerline{45 rue d'Ulm}
   \centerline{75230 Paris cedex 05, France}
} 

\begin{abstract}
Let $\I\subset\real$ be an interval and $T_a:[0,1]\to[0,1]$, $a\in\I$, a one-parameter family of 
piecewise expanding maps such that for each $a\in\I$ the map $T_a$ admits a unique absolutely continuous 
invariant probability measure $\mu_a$. We establish sufficient conditions on such a one-parameter family 
such that a given point $x\in[0,1]$ is typical for $\mu_a$ for a full Lebesgue measure set of parameters 
$a$, i.e., 
$$
\frac{1}{n}\sum_{i=0}^{n-1}\delta_{T_a^i(x)}
\overset{\text{weak-}*}{\longrightarrow}\mu_a,\qquad\text{as}\ n\to\infty,
$$
for Lebesgue almost every $a\in\I$. In particular, we consider $C^{1,1}(L)$-versions of $\be$-transformations, 
piecewise expanding unimodal maps, and Markov structure preserving one-parameter families.
For families of piecewise expanding unimodal maps we show that the turning point is almost surely typical 
whenever the family is transversal.
\end{abstract}



\section{Introduction}
Let $\I\subset\real$ be an interval and $T_a:[0,1]\to[0,1]$, $a\in\I$, a 
one-parameter family of maps of the unit interval such that, for every $a\in\I$, $T_a$ is piecewise $C^2$ and 
$\inf_{x\in[0,1]}|\partial_x T_a(x)|\ge\la>1$, where $\la$ is independent on $a$. 
Assume that, for all $a\in\I$, $T_a$ admits a unique (hence ergodic) absolutely continuous 
invariant probability measure (a.c.i.p.) $\mu_a$. According to \cite{lasota} and \cite{liyorke}, 
for Lebesgue almost every $x\in[0,1]$, some iteration of $x$ by $T_a$ is contained in the support of $\mu_a$.
From Birkhoff's ergodic theorem we derive that Lebesgue almost every point $x\in[0,1]$ 
is {\em typical\/} for $\mu_a$, i.e., 
$$
\frac{1}{n}\sum_{i=0}^{n-1}\delta_{T_a^i(x)}
\overset{\text{weak-}*}{\longrightarrow}\mu_a,\qquad\text{as}\ n\to\infty.
$$
In this paper we are interested in the question if the same kind of statement holds in 
the parameter space, i.e., if a chosen point $x\in[0,1]$ is typical for $\mu_a$ for 
Lebesgue a.e. $a\in\I$, or more general, if, for some given $C^1$ map 
$X:\I\to[0,1]$, $X(a)$ is typical for $\mu_a$ for Lebesgue a.e. $a$ in $\I$.  In Section~\ref{s.uniform} we 
try to establish sufficient conditions on a one-parameter family such that the following statement is true.

\begin{center}
{\em For Lebesgue a.e. $a\in\I$, $X(a)$ is typical for $\mu_a$.\/}
\end{center}

The method we use in this paper is a dynamical one. It is essentially inspired 
by the result of Benedicks and Carleson \cite{bc} where they prove that for the quadratic 
family $f_a(x)=1-ax^2$ on $(-1,1)$ there is a set $\Delta_{\infty}\subset(1,2)$ 
of $a$-values of positive Lebesgue measure for which $f_a$ admits almost surely an a.c.i.p. and 
for which the critical point is typical with respect to this a.c.i.p. 
The main tool in their work is to switch from the parameter space to the dynamical 
interval by showing that the $a$-derivative $\partial_a f_a^j(1)$ is comparable to the 
$x$-derivative $\partial_x f_a^j(1)$. This will also be the essence of the basic 
condition on our one-parameter family $T_a$ with an associated map $X$, i.e., we require that the $a$- and the 
$x$-derivatives of $T_a^j(X(a))$ are comparable  (see condition~(I) below).

Some typicality results related to this paper can be found in \cite{schmeling}, \cite{bruin}, and 
\cite{fp}. The one-parameter families $T_a$, $a\in\I$, considered in these papers have in common that their 
slopes are constant for a fixed parameter value, i.e., for each $a\in\I$ there is a constant $\la_a>1$ 
such that $|T_a'|\equiv\la_a$ on $[0,1]$. 
The advantage of our method and the main novelty of this paper is that we can drop this restriction and, thus, we 
are able to consider much more general families. 
This paper consists mainly of two parts. In the first part, which corresponds to 
Sections~\ref{s.uniform}-\ref{s.keylemmas}, we establish a general criteria for typicality.
In the second part, which corresponds to Sections~\ref{s.beta}-\ref{s.markov}, 
we apply this criteria to several well-studied one-parameter families and derive various 
typicality results for these families. Some of the results are presented in the following sections of this introduction. 

We will shortly give a motivation and an overview of our criteria for typicality. Let $B\subset[0,1]$ be a (small) interval and 
set $x_j(a)=T_a^j(X(a))$, $a\in\I$, i.e., $x_j(a)$ is the forward iteration by $T_a^j$ of the points we are interested in. 
For $h\ge1$ fixed, the main estimate to be established in the method we apply is roughly of the form:
\begin{equation}
\label{eq.intromain1}
\frac{1}{|I|}|\{a\in\I\ ;\ x_{j_1}(a)\in B, ... ,x_{j_h}(a)\in B\}|\le(C|B|)^h,
\end{equation}
where $1\le j_1<...<j_h\le n$ ($n$ large) are $h$ integers with large ($\ge\sqrt{n}$) gaps between each other and $C\ge1$ is some constant. 
Such an estimate is easier to establish on the phase space for a fixed map $T_a$ in the family, i.e., it is easier to verify the estimate
\begin{equation}
\label{eq.intromain2}
|\{x\in[0,1]\ ;\ T_a^{j_1}(x)\in B, ... ,T_a^{j_h}(x)\in B\}|\le(C|B|)^h.
\end{equation}
(See also inequality~\eref{m.exact}.)
Hence, in order to prove \eref{eq.intromain1}, the main idea in the first part of this paper is to compare sets 
in the parameter space $\I$ with sets in the phase space $[0,1]$. This will be possible if the following three conditions, conditions~(I)-(III), 
are satisfied. The first two conditions are rather natural while the last condition is a bit technical and restrictive. 
(However, as we will see in Sections~\ref{s.beta}-\ref{s.markov} these conditions are satisfied for a broad class of important 
one-parameter families of piecewise expanding maps.) 
Condition~(I) roughly says 
that $T_a^j$ and $x_j$ are comparable, i.e., there exists a constant $C\ge1$ such that
\begin{equation}
\label{eq.intromain0}
C^{-1}\le\frac{|D_ax_j(a)|}{|\partial_xT_a^j(X(a))|}\le C,
\end{equation}
for all $j\ge1$, and $a\in\I$ for which the derivatives are defined.  This is a well-known condition for one-parameter families 
of maps on the interval. In the case of piecewise expanding unimodal maps if one chooses $X$ to be equal to the 
turning point (or some iteration of it to make the $x$-derivative well-defined), 
this condition is equivalent to saying that the family is transversal (cf. Lemma~\ref{l.nondegenerate}). 
In fact, in order that \eref{eq.intromain0} holds for all $j\ge1$ it is enough to require that 
the map $x_j:\I\to[0,1]$ has a sufficiently high initial expansion for some $j<\infty$ (see Lemma~\ref{l.startcalc}), 
which makes condition~(I) easy to check.
Condition~(II) requires that the density for $\mu_a$ is uniformly in $a$ bounded above and below away from $0$. 
This ensures that there is a constant $C\ge1$ such that for all $a\in\I$ we have the estimate
\begin{equation}
\label{eq.intromain21}
|\{x\in\supp(\mu_a)\ ;\ T_a^j(x)\in B\}|\le C|B|,
\end{equation}
for all $j\ge1$ (cf. inequality \eref{eq.perronfrobenius}).
Condition~(III) requires that there is a kind of order relation in the one-parameter 
family in the sense that for each two parameter values $a,a'\in\I$ satisfying $a<a'$ the following holds. The symbolic 
dynamics of $T_a$ is contained in the symbolic dynamics of $T_{a'}$ and, furthermore, if $\om$ is a (maximal) interval of 
monotonicity for $T_a^j$ and $\om'$ is the corresponding (maximal) interval of monotonicity for $T_{a'}^j$ (i.e., $\om'$ 
has the same combinatorics as $\om$ up to the $j$-th iteration), then the following holds. 
The minimal distance of points in the image $T_a^j(\om)$ to points in the image $T_{a'}^j(\om')$ is bounded from 
above by $|a'-a|$, i.e., when the parameter are close then the images are close. Further, the size of $T_a^j(\om)$ is 
bounded above by the size of $T_{a'}^j(\om')$.

We shortly point out how to get an estimate as in \eref{eq.intromain1}. 
If conditions~(I)-(III) are satisfied we first divide the parameter interval into intervals $J\subset\I$ of length $1/n$ ($n$ large) 
in order to have good distortion estimates when switching from maps on the parameter interval to maps on the dynamical 
interval. On each such interval $J$ we will establish roughly the estimate 
\begin{multline}
\label{eq.intromain3}
|\{a\in J\ ;\ x_{j_1}(a)\in B, ... ,x_{j_h}(a)\in B\}|\\
\lesssim |\{x\in J_x\ ;\ T_{\a}^{j_1}(x)\in2B, ... ,T_{\a}^{j_h}(x)\in2B\}|\le\frac1n(C|B|)^h,
\end{multline}
where $\a$ is the right boundary point of $J$, $J_x\subset[0,1]$ is an interval of approximately
size $1/n$ containing the image $X(J)$, and $2B$ is the interval twice as long as $B$ and having the same midpoint as $B$. 
The first inequality in \eref{eq.intromain3} is essentially due to condition~(I) and (III) where the main ingredient is 
Lemma~\ref{l.distortion1}. 
Using the estimate \eref{eq.intromain21}, the last inequality in \eref{eq.intromain3} 
(or, similarly, inequality \eref{eq.intromain2})
is straightforward to verify whenever the family has the property that the image by 
$T_a^j$ of each (maximal) monotonicity interval for $T_a^j$ has size close to $1$ (this is, e.g., the case when the family 
preserves a Markov structure; cf. Section~\ref{s.markov}). However, in the general case there are many 
monotonicity intervals with very small images which makes the proof more technical. A sufficient 
upper bound for the measure of exceptionally small monotonicity intervals is established in 
Lemma~\ref{l.remainderintervals} where an important ingredient is the 
$\sqrt{n}$ gap between the $j_i$'s which makes it for too small intervals possible to 'recover'.

\subsection{$\beta$-transformations}
The example in Section~\ref{s.beta} is a $C^{1,1}(L)$-version of the $\be$-trans\-for\-ma\-tion. By saying that a 
function is $C^{1,1}(L)$, we mean that it is $C^1$ and its derivative is in $\Lip(L)$, i.e., its derivative is Lipschitz 
continuous with Lipschitz constant bounded above by $L$. 
For a sequence $0=b_0<b_1<...$ of real numbers such that $b_k\to\infty$ as $k\to\infty$ and a constant $L>0$, 
let $T:[0,\infty)\to[0,1]$ be a right continuous function which is $C^{1,1}(L)$ on each interval 
$[b_k,b_{k+1})$, $k\ge0$. Furthermore, we assume that:
\begin{itemize}
\item
$T(b_k)=0$ for each $k\ge0$.
\item 
For each $a>1$,
$$
1<\inf_{x\in[0,1]}\partial_xT(ax)\quad\text{and}\quad\sup_{x\in[0,1]}\partial_xT(ax)<\infty.
$$
\end{itemize}
See Figure~\ref{f.beta}.

\begin{figure}[htb]
\centering
\includegraphics[width=8cm]{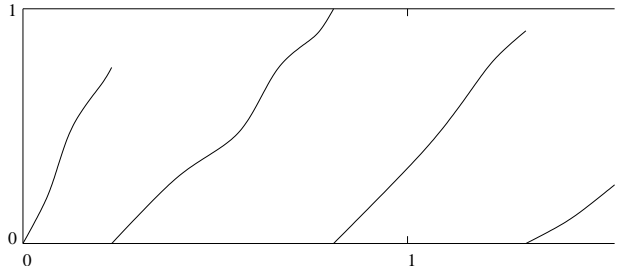}
\caption{A possible beginning of a graph for $T:[0,\infty)\to[0,1]$.}
\label{f.beta}
\end{figure}

Given a map $T$ as above, we obtain a $C^{1,1}(L)$-version of the $\be$-transformation 
$T_a:[0,1]\to[0,1]$, $a>1$, by defining $T_a(x)=T(ax)$, $x\in[0,1]$. 
As we will see in Section~\ref{s.beta}, for each $a>1$, $T_a$ admits a unique a.c.i.p. $\mu_a$, 
and there are many functions $X$ for which we have almost sure typicality.

\begin{Thm}
\label{t.intro2}
If $X:(1,\infty)\to(0,1]$ is $C^1$ and $X'(a)\ge0$, then $X(a)$ is typical for $\mu_a$ for Lebesgue a.e. $a>1$.
\end{Thm}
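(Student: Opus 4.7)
The plan is to verify the three general hypotheses~(I)--(III) of Section~\ref{s.uniform} for the family $T_a(x) = T(ax)$ with the given initial map $X$, and then invoke the main criterion. Since the desired conclusion is a Lebesgue-a.e.\ statement on $(1,\infty)$, it suffices to establish it on an arbitrary compact subinterval $\I = [a_0,a_1] \subset (1,\infty)$. On such an $\I$ the family is uniformly expanding with rate $\lambda := \inf_{a\in\I,\,x\in[0,1]}\partial_x T(ax) > 1$ by hypothesis on $T$, the $C^{1,1}(L)$ structure supplies uniform distortion bounds on each branch, and continuity of $X$ together with $X(\I) \subset (0,1]$ yields $c_0 := \inf_{a\in\I}X(a) > 0$.

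For condition~(I), I would apply the chain rule to $x_j(a) = T_a(x_{j-1}(a))$, divide by $\partial_x T_a^j(X(a))$, and telescope. Using the identity $\partial_a T_a(x)/\partial_x T_a(x) = x/a$ specific to this family, one obtains
\begin{equation*}
\frac{D_a x_j(a)}{\partial_x T_a^j(X(a))}
= X'(a) + \sum_{i=0}^{j-1}\frac{x_i(a)}{a\,\partial_x T_a^i(X(a))}.
\end{equation*}
The hypothesis $X'(a) \ge 0$ together with the positivity of $\partial_x T_a$ on each branch makes every summand non-negative, so the ratio is bounded below by the $i=0$ term $X(a)/a \ge c_0/a_1 > 0$. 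The upper bound follows from $x_i(a) \le 1$, $\partial_x T_a^i(X(a)) \ge \lambda^i$, and a uniform bound on $|X'|$ over~$\I$, giving a constant independent of~$j$. Alternatively, verifying the initial expansion bound for some small $j$ and invoking Lemma~\ref{l.startcalc} would suffice.

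Condition~(II) requires the a.c.i.p.\ density to be uniformly bounded above and below on its support, over $a \in \I$. For this $C^{1,1}(L)$ class of $\beta$-like maps, the upper bound comes from standard Lasota--Yorke estimates exploiting the $C^{1,1}$ control and the uniform expansion, while the lower bound on the support follows from a suitable adaptation of Parry's argument; both are carried out in Section~\ref{s.beta}. Condition~(III) exploits that increasing $a$ refines the monotonicity partition of $T_a$: the breakpoints of $T_a$ in $[0,1]$ are those $b_k/a$ that lie there, and passing to $a' > a$ both drags each existing breakpoint leftward and introduces new ones. On corresponding branches both $T_a$ and $T_{a'}$ map onto $[0,T(b_{k+1}^{-}))$, so the alphabet of $T_a$ embeds into that of $T_{a'}$ and this persists under iteration. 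The distance bound $\dist(T_a^j(\omega),T_{a'}^j(\omega')) \lesssim |a'-a|$ and the image-size comparison then follow inductively from a chain-rule estimate for $\partial_a T_a^j$ parallel to the one used for~(I), with the $C^{1,1}(L)$ hypothesis preventing distortion blow-up.

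The main obstacle I anticipate is condition~(III): matching monotonicity branches of $T_a$ with those of $T_{a'}$ when the alphabet strictly grows as $a$ increases, and propagating the distance and image-size comparisons uniformly in~$j$ without accumulating distortion errors. The uniform lower bound on the invariant density in~(II) is the other delicate point, since the classical piecewise-affine Parry argument must be adapted to the $C^{1,1}(L)$ setting where the density is no longer a step function. Once (I)--(III) are in place on every compact $\I \subset (1,\infty)$, the general criterion from Section~\ref{s.uniform} delivers almost-sure typicality of $X(a)$ on $\I$, and since $(1,\infty)$ is exhausted by such $\I$, the theorem follows.
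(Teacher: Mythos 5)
Your strategy — verify conditions (I)--(III) for the family $T_a(x)=T(ax)$ on subintervals of $(1,\infty)$ and then invoke Theorem~\ref{t.main}, using the identity $\partial_a T_a(x)/\partial_x T_a(x)=x/a$ and the non-negativity of all terms in the telescoped chain-rule formula to get a uniform lower bound on the ratio in condition~(I) — is essentially the paper's route through Theorem~\ref{t.beta} and Corollary~\ref{c.beta}, and your $i=0$ bound $D_ax_j(a)/\partial_xT_a^j(X(a))\ge X(a)/a$ is exactly what drives inequality~\eref{eq.betastartcalc}.

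However, there is a genuine gap. Condition~(I) is stated via the parameter-space partitions $\P_j|\I$, which by construction only contain an interval through $a$ if $x_i(a)\in K(a)\setminus\{b_0(a),\dots,b_{p_0}(a)\}$ for $0\le i<j$, where $K(a)=\supp(\mu_a)$. By Lemma~\ref{l.uniqueb}, in the general $C^{1,1}(L)$ setting $K(a)$ is an interval $[0,b(a)]$ that may be strictly smaller than $[0,1]$. If $X(a)\notin K(a)$, then for no $j$ is $a$ contained in an element of $\P_j|\I$, so the second requirement of condition~(I) (finiteness of the set of parameters lying in no partition element) fails, and your pointwise derivative bound is not attached to any partition element in the sense the criterion requires. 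You never address why $X(a)\in K(a)$, and for a general $T$ (as opposed to $T(x)=x\bmod 1$, where $K(a)=[0,1]$) this can fail on sets of positive measure. The paper resolves this by passing from $Y$ to $X(a)=T_a^{j_0}(Y(a))$: the exponential lower bound~\eref{eq.betastartcalc}, combined with property~\eref{eq.propertyK}, lets one cover $\I$ (after discarding a countable set) by subintervals $\tilde{\I}$ on each of which, for a suitable $j\ge j_0$, $y_j|_{\tilde{\I}}$ is $C^1$, satisfies the initial-expansion threshold of Lemma~\ref{l.startcalc}, \emph{and} lands in $K(a)$. Related to this, your reduction to ``arbitrary compact'' $\I=[a_0,a_1]$ is too coarse: properties~(i)--(iii) of Section~\ref{ss.prel} require the branch count $p(a)$ and the support $K(a)$ to be constant on the parameter intervals you work with, and ensuring this is possible (modulo a countable nowhere-dense set) is precisely the content of Lemma~\ref{l.uniqueb}. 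These two reductions are not cosmetic and are missing from your proposal.
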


If we choose $X(a)=b_1/a$ then $X'(a)<0$ and $T_a^j(X(a))=0$ for all $j\ge1$. Hence, if the condition $X'(a)\ge0$ 
in Theorem~\ref{t.intro2} is not satisfied, we cannot any longer guarantee almost sure typicality for the a.c.i.p.
For an illustration of some curves on which we have a.s. typicality see Figure~\ref{f.birkhoff} 
(when $a$ is fixed, we can apply Birkhoff's ergodic theorem and get a.s. 
typicality on the associated vertical line). Observe that if we choose $T:[0,\infty)\to[0,1]$ 
by $T(x)=x\mod1$, then $T_a(x)=ax\mod 1$ is the usual $\be$-transformation. 
Theorem~\ref{t.intro2} generalizes a result due to 
Schmeling \cite{schmeling} where it is shown that for the usual $\beta$-transformation 
the point $1$ is typical for the associated a.c.i.p. for Lebesgue a.e. $a>1$.

\begin{figure}[htb]
\centering
\includegraphics[width=8cm]{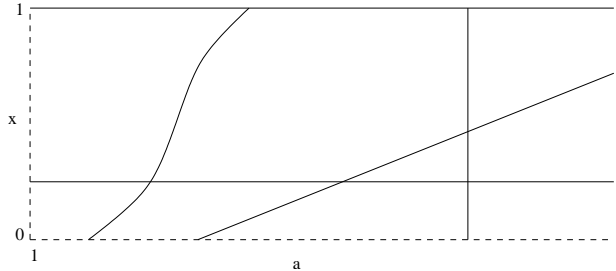}
\caption{Lines and curves on which we have a.s. typicality for the $C^{1,1}(L)$-version of 
the $\beta$-transformation.}
\label{f.birkhoff}
\end{figure}

\subsection{Unimodal maps}
In Section~\ref{s.unimodal} we investigate one-parameter families of piecewise expanding unimodal maps.
A map $T:[0,1]\to[0,1]$ is a {\em piecewise expanding unimodal map} if it is continuous and if there exists a turning point 
$c\in(0,1)$ such that $T|_{[0,c]}$ and $T|_{[c,1]}$ are $C^{1,1}(L)$, $\inf_{x\in[0,c]}T'(x)>1$ and $\sup_{x\in[c,1]}T'(x)<-1$, 
and $T(c)=1$ and $T^2(c)=0$. (In Section~\ref{s.unimodal} we will use a slightly different representation of piecewise 
unimodal maps which will be more convenient to work with.) 
By \cite{liyorke} (or \cite{wong}), since $T$ is only at one point not $C^{1,1}(L)$, there exists a unique a.c.i.p. $\mu$. We call 
$T$ mixing, if it is topologically mixing on $[0,1]$. (This implies that $\supp(\mu)=[0,1]$.)
Let $\I\subset\real$ be a finite closed interval. For  
a one-parameter family of piecewise expanding unimodal maps $T_a$, $a\in\I$, we make some natural requirements on 
the parameter dependency as, e.g., the following (cf. Section~\ref{ss.prel} properties~(i)-(iii)). For each $x\in[0,1]$ the 
map $a\mapsto T_a(x)$ is $\Lip(L)$ on $\I$ (in particular, this implies that the turning point $c(a)$ of $T_a$ is Lipschitz 
continuous in $a$), and if $J\subset\I$ is an interval on which $x\neq c(a)$, 
then $a\mapsto T_a(x)$ is $C^1(J)$ and $a\mapsto\partial_xT_a(x)$ is $\Lip(L)$ on $J$. The main result in 
Section~\ref{s.unimodal} can be stated as follows.

\begin{Thm}
\label{t.uniintro}
If $T_a$, $a\in\I$, is a one-parameter family of mixing piecewise expanding unimodal maps such that for some 
$j_0\ge3$,
$$
|D_aT_a^{j_0}(c(a))|>\frac{\sup_{a\in\I}\sup_{x\in[0,1]}|\partial_aT_a(x)|}{\inf_{a\in\I}\inf_{x\in[0,1]}|\partial_xT_a(x)|-1},
$$
for all but finitely many $a\in\I$ (i.e., the set of $a\in\I$ for which $a\mapsto T_a^{j_0}(c(a))$ is not differentiable is finite),
then the turning point $c(a)$ is typical for the a.c.i.p. for $T_a$, for almost every $a\in\I$.
\end{Thm}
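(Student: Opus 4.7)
The plan is to deduce the theorem by verifying the three conditions~(I), (II), (III) of the general typicality criterion developed in Sections~\ref{s.uniform}-\ref{s.keylemmas}, applied to the map $X(a):=c(a)$. Since the finite exceptional set in the hypothesis has zero Lebesgue measure, it suffices to prove almost-sure typicality on each closed sub-interval of $\I$ on which $a\mapsto T_a^{j_0}(c(a))$ is $C^1$; the three conditions will be verified on such a sub-interval.

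For condition~(I), set $x_j(a):=T_a^j(c(a))$, $\la:=\inf_{a\in\I,x\in[0,1]}|\partial_xT_a(x)|$, and $M:=\sup_{a\in\I,x\in[0,1]}|\partial_aT_a(x)|$. The chain-rule recursion
$$
D_ax_{j+1}(a)=\partial_aT_a(x_j(a))+\partial_xT_a(x_j(a))\cdot D_ax_j(a)
$$
together with the reverse triangle inequality yields $|D_ax_{j+1}|\ge\la|D_ax_j|-M$. The hypothesis $|D_ax_{j_0}|>M/(\la-1)$ therefore propagates inductively to all $j\ge j_0$ and forces $|D_ax_j|$ to grow at least geometrically. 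Writing $R_j:=D_ax_j/\partial_xT_a^j(c(a))$, the telescoping identity
$$
R_j=R_{j_0}+\sum_{i=j_0}^{j-1}\frac{\partial_aT_a(x_i(a))}{\partial_xT_a^{i+1}(c(a))}
$$
shows that the tail sum is absolutely dominated by $M/((\la-1)|\partial_xT_a^{j_0}(c(a))|)$, which is strictly smaller than $|R_{j_0}|$ by the hypothesis. Thus $|R_j|$ stays bounded away from $0$ and $\infty$ by continuous functions of $a$, hence uniformly on each closed sub-interval, which gives condition~(I) via Lemma~\ref{l.startcalc}.

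For condition~(II), the uniform $C^{1,1}(L)$ regularity of the two branches of $T_a$ together with the uniform expansion $\la>1$ yield a uniform Lasota-Yorke inequality, whence a uniform bounded-variation bound on the density of $\mu_a$; mixing gives $\supp\mu_a=[0,1]$, and standard Perron-Frobenius arguments produce a uniform lower bound on the density. Condition~(III) --- the monotone inclusion of symbolic dynamics together with the Lipschitz control on the position and size of images $T_a^j(\om)$ of corresponding monotonicity intervals --- is where the unimodal structure enters; using the $\Lip(L)$ dependence of both $T_a$ and of the turning point $c(a)$ on $a$, one argues by induction on $j$ that the dynamical partitions evolve continuously with $a$ and that corresponding branches stay close in the required quantitative sense, after possibly reparameterizing $\I$ to fix the orientation of the family. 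Once~(I)-(III) hold on each sub-interval, the general criterion gives Lebesgue a.e. typicality of $c(a)$ on the sub-interval, hence on all of $\I$.

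The main obstacle I expect is condition~(III). For $\be$-transformations and Markov-preserving families the ordering of symbolic dynamics in the parameter is essentially built in, but for general piecewise expanding unimodal families the turning point itself moves with $a$, so monotonicity intervals of $T_a^j$ can be created or destroyed at specific parameter values; uniformly controlling $|T_a^j(\om)|$ against $|T_{a'}^j(\om')|$ and $\dist(T_a^j(\om),T_{a'}^j(\om'))$ against $|a'-a|$ for all $j$ will require careful bookkeeping of the kneading-type information of $T_a$ and is the expected technical crux of Section~\ref{s.unimodal}.
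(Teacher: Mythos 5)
Your overall strategy is the same as the paper's: pass to sub-intervals where $a\mapsto T_a^{j_0}(c(a))$ is $C^1$, then verify conditions~(I)--(III) so that Theorem~\ref{t.main} applies. The verification of (I) via the recursion/telescoping identity is essentially correct, and (II) via a uniform Lasota--Yorke estimate plus exactness is also what the paper does. There is, however, a genuine gap in your treatment of condition~(III), and a smaller issue with the way you invoke Lemma~\ref{l.startcalc}.

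On condition~(III): you say that ``one argues by induction on $j$ that the dynamical partitions evolve continuously with $a$'' and that the rest is ``careful bookkeeping,'' but continuity alone is nowhere near enough. Condition~(III) is a \emph{monotone embedding} of symbolic dynamics for $a_1\le a_2$, and the paper's Lemma~\ref{l.helptwo} establishes it by identifying the boundary points of each $\omega\in\P_j(a)$ with iterates $T_a^k(0)$, $T_a^l(0)$ of the turning point (using a canonical ``minimal'' choice of $k,l$), and then proving that whenever $k\ge j_0$ (resp.~$l\ge j_0$) one has $D_aT_a^k(0)<0$ (resp.~$D_aT_a^l(0)>0$). This sign structure is not a continuity statement; it is deduced from condition~(I) together with the sign-constancy \eref{eq.signconst}, and it is exactly what forces $T_{a_1}^j(\om)\subset T_{a_2}^j(\om')$ modulo the boundary terms with small $k,l$, which are handled using the separation assumption \eref{eq.j0} on a further subdivision of $\ii$. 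The subsequent derivation of \eref{eq.dreidist} and \eref{eq.drei2} then branches into several cases (whether $0$ lies in $T_{a_1}^j(\om_1)$, in $T_{a_2}^j(\om_2)$, both, or neither; whether $\min\{k_2,l_2\}\ge j_0$; whether $T_{a_1}^{l_1-k_2+1}(0)\ge T_{a_2}^{l_2-k_2+1}(0)$; etc.). None of this falls out of the $\Lip(L)$ dependence on $a$ by itself, and you have not supplied a substitute argument, so the ``crux'' you flag remains unproved. You also remark that a difficulty is that $c(a)$ moves with $a$; the paper removes that difficulty up front by conjugating to a representation on $[-K,1]$ in which the turning point is frozen at $0$, so the partition point does not actually move — this is another piece you would need to make explicit.

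On condition~(I): you assert that the hypothesis $|D_ax_{j_0}|>M/(\la-1)$ ``gives condition~(I) via Lemma~\ref{l.startcalc},'' but the hypothesis of Lemma~\ref{l.startcalc} is the strictly stronger inequality $\inf_a|y_{j_0}'(a)|\ge M/(\la-1)+2L$; the hypothesis of Theorem~\ref{t.uniintro} contains no $+2L$ slack. The paper explicitly addresses this: because the turning point is frozen at $0$ in the chosen coordinates, the $2L$ in \eref{eq.startcomp} (which guards against drifting partition points) can be replaced by any $\kappa>0$, after which a neighborhood argument (inequality \eref{eq.signuni}) closes the gap. As written, your direct citation of Lemma~\ref{l.startcalc} does not quite apply. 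You should also separate out the case where $0$ is periodic for $T_0$, where $T^i_a(0)$ may equal $0$ for some $i<j_0$ at $a=0$ and the differentiability and positivity argument requires the extra analysis of \eref{eq.deg1}.
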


Other ways of stating Theorem~\ref{t.uniintro} are: If there exists a $j_0\ge3$ such that the map $X(a)=T_a^{j_0}(c(a))$ 
satisfies condition~(I) (see Section~\ref{ss.main}) then the turning point is almost surely typical; or if 
the family is non-degenerate (or transversal) in each point $a\in\I$ (see Section~\ref{s.unimodal}) then 
the turning point is almost surely typical. In Section~\ref{s.unimodal} we will state a more local version 
of Theorem~\ref{t.uniintro}.

In Section~\ref{s.tent} we will apply Theorem~\ref{t.uniintro} to unimodal maps 
with slopes constant to the left and to the right of the turning point, the so called {\em skew tent maps}. 
Let these slopes be $\al$ and $-\be$ where $\al,\be>1$ and denote the corresponding skew tent map by 
$T_{\al,\be}$. (In order that $T_{\al,\be}$ maps the unit interval into itself, 
we have also to assume that $\al^{-1}+\be^{-1}\ge1$.)
Fix two points $(\al_0,\be_0)$ and $(\al_1,\be_1)$ in the set 
$\{(\al,\be)\ ;\ \al,\be>1\ \text{and}\ \al^{-1}+\be^{-1}\ge1\}$ such that 
$\al_1\ge\al_0$, $\be_1\ge\be_0$, and at least one of these two inequalities is sharp. 
Let
$$
\al:[0,1]\to[\al_0,\al_1]\quad\text{and}\quad\be:[0,1]\to[\be_0,\be_1]
$$
be functions in $C^1([0,1])$ such that $(\al(0),\be(0))=(\al_0,\be_0)$, $(\al(1),\be(1))=(\al_1,\be_1)$, 
and, for all $a\in[0,1]$, if $\al_0\neq\al_1$, then $\al'(a)>0$ and if $\be_0\neq\be_1$, then $\be'(a)>0$.
Consider the one-parameter family $T_a$, 
$a\in[0,1]$, where $T_a:[0,1]\to[0,1]$ is the skew tent map defined by $T_a=T_{\al(a),\be(a)}$. 
The main result of Section~\ref{s.tent} is the following.

\begin{Thm}
\label{t.intro3}
The turning point for the skew tent map $T_a$ is typical 
for $\mu_a$ for Lebesgue a.e. $a\in[0,1]$.
\end{Thm}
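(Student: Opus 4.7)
To prove Theorem~\ref{t.intro3}, the plan is to apply Theorem~\ref{t.uniintro} (or its local version promised in Section~\ref{s.unimodal}) to the family $T_a = T_{\al(a),\be(a)}$, $a \in [0,1]$. This reduces the task to verifying two things: (a) that $T_a$ is a one-parameter family of mixing piecewise expanding unimodal maps with the required $C^1$ dependence on $a$, and (b) that there exists $j_0 \ge 3$ for which the transversality bound
\[
|D_a T_a^{j_0}(c(a))| > \frac{\sup_{a,x}|\pa_a T_a(x)|}{\inf_{a,x}|\pa_x T_a(x)|-1}
\]
holds for all but finitely many $a$.

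Condition (a) is essentially bookkeeping. The turning point is $c(a) = 1/\al(a)$, which is $C^1$ in $a$; on each of the two branches, $T_a(x)$ is jointly $C^1$ in $(a,x)$ with $\pa_x T_a \in \{\al(a),-\be(a)\}$ Lipschitz in $a$; uniform expansion follows from $\al_0,\be_0>1$. Topological mixing of $T_a$ on the invariant interval containing $c(a)$ and its forward orbit is classical for skew tent maps with $\al,\be>1$ and $\al^{-1}+\be^{-1}\ge 1$, and one may restrict to this invariant interval without loss.

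For (b), I compute $T_a(c(a)) = 1$ (so $D_a c_1 \equiv 0$) and $c_2(a) := T_a^2(c(a)) = 1 - \be(a) + \be(a)/\al(a)$, so
\[
D_a c_2(a) = \be'(a)\Bigl(\frac{1}{\al(a)}-1\Bigr) - \frac{\be(a)\,\al'(a)}{\al(a)^2} \le 0,
\]
with strict inequality at any $a$ where $\al'(a)>0$ or $\be'(a)>0$. Since at least one of $\al,\be$ is strictly increasing, $D_a c_2$ is strictly negative on a nontrivial subinterval; propagating through the recursion
\[
D_a c_{j+1}(a) = \pa_a T_a(c_j(a)) + \pa_x T_a(c_j(a))\cdot D_a c_j(a),
\]
with $|\pa_x T_a(c_j)| \ge \la := \min(\al_0,\be_0)>1$ and $|\pa_a T_a|$ uniformly bounded, yields geometric growth of $|D_a c_j|$ once it exceeds an explicit threshold of order $\sup|\pa_aT_a|/(\la-1)$. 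Hence a fixed $j_0$ produces the required lower bound off a finite bad set.

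The main obstacle will be the possibility of sign cancellation in the recursion, since $\pa_a T_a(c_j)$ and $\pa_x T_a(c_j)\,D_a c_j$ may oppose each other depending on which branch of $T_a$ the point $c_j(a)$ lies on. I would rule out systematic cancellation by stratifying $[0,1]$ into finitely many sub-intervals on which the combinatorial itinerary of $c_1(a),\ldots,c_{j_0}(a)$ relative to $c(a)$ is constant; on each such sub-interval $c_j(\cdot)$ is a rational function of $\al(\cdot),\be(\cdot)$, hence $D_a c_{j_0}$ has at most finitely many zeros there. The local version of Theorem~\ref{t.uniintro} then applies on each sub-interval, and summing yields full Lebesgue-measure typicality on $[0,1]$.
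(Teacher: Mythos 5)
Your overall game plan (reduce to Theorem~\ref{t.uniintro}/\ref{t.main1} and then verify the transversality threshold for some $j_0\ge3$) is the right one and matches the paper. But the verification of the threshold is where the proposal breaks down, and it is exactly the hard part.

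First, a secondary issue: mixing is not automatic for skew tent maps with $\al,\be>1$ and $\al^{-1}+\be^{-1}\ge1$. Whenever $\al\le\be/(\be^2-1)$ the map $T_{\al,\be}$ is renormalizable and not topologically mixing on its invariant interval. The paper explicitly performs a finite renormalization reduction (Section~\ref{s.tent}, using \eref{eq.unirenorm}--\eref{eq.nonrenorm}) before applying Theorem~\ref{t.main1}. Your claim that mixing is ``classical'' as stated is false and this reduction is a necessary step.

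The central gap is in item~(b). You acknowledge that the recursion $D_a c_{j+1}=\pa_aT_a(c_j)+\pa_xT_a(c_j)\,D_a c_j$ may suffer sign cancellation, and you propose to handle it by partitioning $[0,1]$ into finitely many intervals of constant itinerary up to time $j_0$, on which $D_a c_{j_0}$ is a rational function with finitely many zeros. This does not close the gap: near each zero of $D_a c_{j_0}$ there is an entire \emph{open} interval of parameters on which $|D_a c_{j_0}|$ is below the required threshold $\Lambda_0$, so the hypothesis $|D_a T_a^{j_0}(c(a))|>\Lambda_0$ ``for all but finitely many $a$'' fails there. Increasing $j_0$ creates more combinatorial pieces and potentially more zeros, and nothing in your argument rules out cancellation persisting for all $j$; in fact for a general one-parameter family nothing prevents $\sum_i \pa_aT_a(c_i)/T_a^{i\prime}(c_1)$ from vanishing at some parameter, which is exactly the degenerate (non-transversal) case. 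The paper avoids this by importing the deep monotonicity result of Misiurewicz and Visinescu (stated here as Proposition~\ref{p.mv}): both $\pa_\al T_{\al,\be}^j(0)$ and $\pa_\be T_{\al,\be}^j(0)$ have the \emph{same} sign as $T_a^{j-1}{}'(1)$ and a uniform exponential lower bound $\ka\,\be_0^{[(j-3)/m_1]}$. Since $\al'(a),\be'(a)\ge0$ and at least one is bounded below, $D_aT_a^j(0)=\al'\pa_\al T^j+\be'\pa_\be T^j$ is a sum of two terms of like sign, so there is \emph{no} cancellation and $|D_aT_a^j(0)|$ grows exponentially uniformly in $a$. That is what allows a single $j_0$ to work for all but finitely many $a$. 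Your proposal contains no substitute for this sign-coherence input, so it does not deliver the required inequality.
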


Theorem~\ref{t.intro3} generalizes a result due to Bruin \cite{bruin} where almost sure typicality is shown 
for the turning point of symmetric tent maps (i.e., when $\al(a)\equiv\be(a)$). (See also \cite{bm}.)

\subsection{One-parameter families of Markov maps}
In Section~\ref{s.markov} we consider one-pa\-ra\-me\-ter families preserving a certain Markov structure. 
A simple example for such a family are the maps $T_a:[0,1]\to[0,1]$ defined by
\begin{equation*}
T_a(x)=\left\{\begin{array}{ll}
              \frac{x}{a}\quad&\text{if}\ x<a,\\
              \frac{x-a}{1-a}\quad&\text{otherwise},
              \end{array}\right.
\end{equation*}
where the parameter $a\in(0,1)$. See Figure~\ref{f.markov}. By \cite{liyorke}, since this map has only one 
point of discontinuity, it admits a unique a.c.i.p. $\mu_a$ (which coincides in this case with the Lebesgue measure on 
$[0,1]$). In Example~\ref{ex.markov} in Section~\ref{s.markov} we will show the following. 

\begin{Prop}
\label{p.intro4}
If $X:(0,1)\to(0,1)$ is a $C^1$ map such that $X'(a)\le0$, then
$X(a)$ is typical for $\mu_a$ for a.e. parameter $a\in(0,1)$.
\end{Prop}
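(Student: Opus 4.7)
The plan is to verify conditions~(I)--(III) of Section~\ref{s.uniform} for the family $T_a$ and the map $X$, and then to invoke the general typicality criterion established there. Since the conclusion is almost-sure typicality on $(0,1)$, a countable exhaustion reduces the problem to an arbitrary compact subinterval $\I=[a_0,a_1]\subset(0,1)$. On such an $\I$, the image $X(\I)$ is a compact subset of $(0,1)$, so $X(a)\in[\delta,1-\delta]$ for some $\delta>0$, and $\inf_{a\in\I}\min(1/a,1/(1-a))\ge\la$ for some $\la>1$.

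Conditions~(II) and~(III) will come essentially for free. For every $a\in(0,1)$ the invariant density of $\mu_a$ is constantly $1$, giving~(II). For~(III), both branches of $T_a$ are Markov and map bijectively onto $[0,1]$, so the symbolic dynamics is the full $\{0,1\}$-shift for every $a$; consequently the combinatorics of $T_a^j$ is independent of $a$, every maximal monotonicity interval $\om$ of $T_a^j$ satisfies $T_a^j(\om)=[0,1]$, and both the image-closeness and the size-comparison clauses of~(III) are trivial.

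The core of the argument is condition~(I). A direct computation on the two branches yields
$$
\frac{\partial_aT_a(x)}{\partial_xT_a(x)}=\begin{cases}-x/a&\text{if }x<a,\\-(1-x)/(1-a)&\text{if }x\ge a,\end{cases}
$$
and in both cases this ratio lies in $[-1,0]$; in particular it is \emph{non-positive}. Writing $x_i(a)=T_a^i(X(a))$, the chain-rule recursion, divided by $\partial_xT_a^j(X(a))$, telescopes to
$$
\frac{D_ax_j(a)}{\partial_xT_a^j(X(a))}=X'(a)+\sum_{i=0}^{j-1}\frac{\partial_aT_a(x_i(a))/\partial_xT_a(x_i(a))}{\partial_xT_a^i(X(a))},
$$
valid off the countable, measure-zero set of parameters at which some $x_i$, $i<j$, equals~$a$. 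Under the hypothesis $X'(a)\le0$, every term on the right is non-positive, so no cancellation can occur and the absolute value of the left-hand side equals the sum of the absolute values on the right. The upper bound then follows from the geometric series $\sum\la^{-i}$, while the $i=0$ summand alone provides the lower bound $\min(X(a)/a,(1-X(a))/(1-a))\ge\delta$. This gives $|D_ax_j/\partial_xT_a^j|\in[\delta,\sup_\I|X'|+\la/(\la-1)]$ uniformly in $j\ge1$ and $a\in\I$, which is condition~(I). The main theorem of Section~\ref{s.uniform} then delivers the proposition.

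The main (and essentially the only) obstacle is the lower bound in~(I): without the sign hypothesis $X'(a)\le0$, the term $X'(a)$ could be opposite in sign to the accumulated correction sum, producing cancellations that drive the ratio to zero and invalidate~(I). The hypothesis is used precisely to prevent this, and nowhere else.
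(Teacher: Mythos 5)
Your strategy mirrors the paper's: reduce to a compact subinterval, observe (II) and (III) are immediate for this Markov family, and use the sign hypothesis $X'(a)\le 0$ to force all contributions to the telescoped $a$-derivative to share one sign, hence no cancellation. The paper reaches condition~(I) slightly more indirectly, by feeding the sign argument into its general criterion (Lemma~\ref{l.startcalc}) to first produce a $j_0$ with $|y_{j_0}'(a)|$ above a threshold; your direct verification of the two-sided bound in~(I), with the $i=0$ term supplying the lower bound, is cleaner and avoids that detour. Your branch-by-branch computation of $\partial_a T_a/\partial_x T_a$ is correct.

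There is one genuine gap. Condition~(I) has two clauses: the two-sided bound on the ratio, which you establish, \emph{and} the requirement that for each $j$ only finitely many $a\in\I$ fail to lie in an element of $\P_j|\I$ (equivalently, that $x_j$ is piecewise $C^1$ with finitely many pieces). You dismiss the exceptional set as ``countable, measure-zero,'' which is not what the condition asks for and is not enough for the machinery of Section~\ref{s.proof} (the partitions $\P_j|J$ must be finite collections of intervals). The fix uses exactly the monotonicity you already have: every term in the telescoped sum is $\le 0$, so $x_j'(a)<0$ strictly on each element of $\P_j|\I$ (for $j\ge 1$, the $i=0$ term is strictly negative since $X(a)\in(0,1)$); meanwhile the breakpoint of $T_a$ is $a$ itself, which is strictly increasing. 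A strictly decreasing map and a strictly increasing map cross at most once on any interval, so each element of $\P_j|\I$ spawns at most two new cut-points when passing to $\P_{j+1}|\I$, and the count stays finite by induction. The paper makes precisely this argument; you should include it.
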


Observe that if $X(a)=p_a$ where $p_a$ is the unique point of periodicity $2$ in the interval $(0,a)$, then 
$X'(a)>0$ and $X(a)$ is not typical for $\mu_a$ for any $a\in\I$. 
The very simple structure of the family in this section makes it a good candidate for serving the reader 
as a model along the paper. Example~\ref{ex.markov} in Section~\ref{s.markov} is formulated slightly more generally by 
composing $T_a$ with a $C^{1,1}(L)$ homeomorphism $g:[0,1]\to[0,1]$.

\begin{figure}[htb]
\centering
\includegraphics[width=8cm]{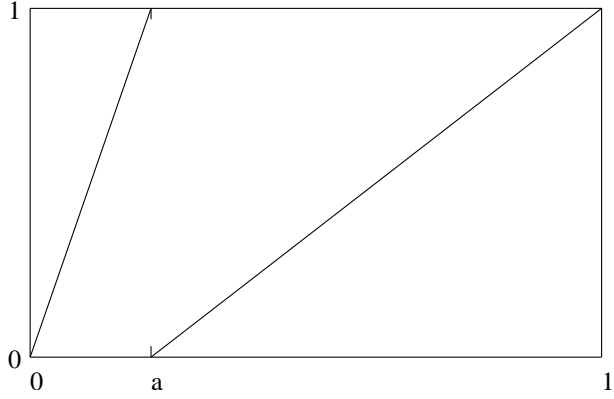}
\caption{A Markov structure preserving one-parameter family $T_a$ where $a\in(0,1)$.}
\label{f.markov}
\end{figure}


\section{Piecewise expanding one-parameter families}
\label{s.uniform}

\subsection{Preliminaries}
\label{ss.prel}

In this section we introduce the basic notation and put up a general model for 
one-parameter families of piecewise expanding maps of the unit interval. 
A map $T:[0,1]\to\real$ will be called {\em piecewise $C^{1,1}(L)$\/} if there exists a partition 
$0=b_0<b_1<...<b_p=1$ of the unit interval such that for each $1\le k\le p$ the restriction 
of $T$ to the open interval $(b_{k-1},b_k)$ is a $C^{1,1}(L)$ function. Observe that, by the Lipschitz 
property, it follows that 
$T$ restricted to $(b_{k-1},b_k)$ can be extended to the closed interval $[b_{k-1},b_k]$ as a 
$C^{1,1}(L)$ function. Let $\I\subset\real$ be an interval of finite length and $T_a:[0,1]\to[0,1]$, 
$a\in\I$, a one-parameter family of piecewise $C^{1,1}(L)$ maps where the Lipschitz constant $0<L<\infty$ is 
independent on the choice of the parameter $a$. 
We assume that there are real numbers $1<\la\le\La<\infty$ such that for every $a\in\I$, 
\begin{equation}
\label{eq.la}
\la\leq\inf_{x\in[0,1]}|\partial_x T_a(x)|\quad\text{and}\quad\sup_{x\in[0,1]}|\partial_x T_a(x)|\leq\La.
\end{equation}  
Let $0=b_0(a)<b_1(a)<...<b_{p(a)}(a)=1$ be the partition of the unit interval associated to $T_a$. 
We make the following natural assumption on the parameter dependence. 

\begin{itemize}
\item[(i)] 
The number of monotonicity intervals for the $T_a$'s is constant, i.e., $p(a)\equiv p_0$, 
and the partition points $b_k(a)$, $0\le k\le p_0$, are $\Lip(L)$ on $\I$. 
Furthermore, there is a constant $\delta_0>0$ such that 
$$
b_k(a)-b_{k-1}(a)\ge\delta_0,
$$
for all $1\le k\le p_0$ and $a\in\I$.
\item[(ii)]
If $x\in[0,1]$ and $J\subset\I$ is a parameter interval 
such that $b_k(a)\neq x$, for all $a\in J$ and $0\le k\le p_0$, then $a\mapsto T_a(x)$ is $C^1(J)$ and 
both maps $a\mapsto T_a(x)$ and $a\mapsto\partial_x T_a(x)$ are $\Lip(L)$ on $J$.
\end{itemize}

In the sequel, instead of referring to \cite{lasota} and \cite{liyorke}, we will refer to a paper by 
S. Wong \cite{wong} who extended the results in \cite{lasota} and \cite{liyorke} on piecewise 
$C^2$ maps to a broader class of maps containing also piecewise $C^{1,1}(L)$ maps.
For a fixed $a\in\I$, by \cite{wong}, the number of ergodic a.c.i.p. for $T_a$ is finite and the support of an ergodic a.c.i.p. is 
a finite union of intervals. Since we are always interested in only one ergodic a.c.i.p., we can 
without loss of generality assume that for each $T_a$, $a\in\I$, there is a unique (hence ergodic) a.c.i.p. which we denote by $\mu_a$.
Let $K(a)=\supp(\mu_a)$. By \cite{wong}, for Lebesgue a.e. $x\in[0,1]$, 
the accumulation points of the forward orbit of $x$ is identical with $K(a)$, i.e.,
\begin{equation}
\label{eq.propertyK}
K(a)=\bigcap_{N=1}^{\infty}\overline{\{T_a^n(x)\}_{n=N}^{\infty}}.
\end{equation}
For $a\in\I$, let 
$$
\{D_1(a),...,D_{p_1(a)}(a)\}=\{\text{connected components of $K(a)\setminus\{b_0(a),...,b_{p_0}(a)\}$}\}, 
$$
i.e., the $D_k(a)$'s are the monotonicity intervals for $T_a:K(a)\to K(a)$. We assume the following.
\begin{itemize}
\item[(iii)]
The number of $D_k(a)$'s is constant in $a$, i.e., $p_1(a)\equiv p_1$ for all $a\in\I$. The boundary points of 
$D_k(a)$, $1\le k\le p_1$, change continuously in $a\in\I$.
\end{itemize}

\subsection{Partitions}
\label{ss.partitions}
For a fixed parameter value $a\in\I$, we denote by $\P_j(a)$, $j\ge1$, the partition on the dynamical interval 
consisting of the maximal open intervals of smooth monotonicity for the map $T_a^j: K(a)\to K(a)$. 
More precisely, $\P_j(a)$ denotes the set of open intervals $\om\subset K(a)$ such that 
$T_a^j:\om\to K(a)$ is $C^{1,1}(L)$ and $\om$ is maximal, i.e., for every other open interval $\ome\subset K(a)$ 
with $\om\subsetneq\ome$, $T_a^j:\ome\to K(a)$ is no longer $C^{1,1}(L)$. 
Clearly, the elements of $\P_1(a)$ are the interior of the intervals $D_k(a)$, $1\le k\le p_1$.
For an interval $H\subset K(a)$, we denote by $\P_j(a)|H$ the restriction 
of $\P_j(a)$ to the set $H$. 
For a set $J\subset K(a)$, for which there exists, $1\le k\le p_1$, such that $J\subset D_k(a)$, 
we denote by $\ind_a(J)$ the index (or symbol) $k$.

We will define similar partitions on the parameter interval $\I$. 
Let $X:\I\to[0,1]$ be a piecewise $C^1$ map from the parameter interval $\I$ into the dynamical interval $[0,1]$. 
The points $X(a)$, $a\in\I$, will be our candidates for typical points. The forward orbit of a point $X(a)$ under the map $T_a$ 
we denote as 
$$
x_j(a):=T_a^j(X(a)),\qquad\quad j\ge0.
$$

\begin{Rem}
\label{rem.critical}
Since a lot of information for the dynamics of $T_a$ is contained in the forward orbits of the 
partition points $b_k(a)$, $0\le k\le p_0$, it is of interest to know how the forward orbits of these points are distributed. 
Hence, an evident choice of the map $X$ would be 
$$
X(a)=\lim_{\begin{subarray}{c}
           x\to b_k(a) \\
           x\in\omega
           \end{subarray}}
     T_a(x),
$$
where $\omega\in\P_1(a)$ is an interval adjacent to $b_k(a)$.
\end{Rem}

Let $J\subset\I$ be an open interval (or a finite union of open intervals) and let $\C$ denote the finite number of $a$ values in 
which $X$ is not differentiable. By $\P_j|J$, $j\ge1$, we denote the partition consisting 
of all open intervals $\om$ in $J\setminus\C$ such that 
for each $0\le i<j$, $x_i(a)\in K(a)\setminus\{b_0(a),...,b_{p_0}(a)\}$, for all $a\in\om$, and such that $\om$ is maximal, 
i.e., for every other open interval $\ome\subset J$ with $\om\subsetneq\ome$, there exist 
$a\in\ome$ and $0\le i<j$ such that $x_i(a)\in\{b_0(a),...,b_{p_0}(a)\}$. 
Observe that this partition might be empty. This is, e.g., the case when $X(a)\notin K(a)$ for all $a\in\I$ or when 
$T_a$ is the usual $\beta$-transformation and the map $X$ is chosen to be equivalently equal to $0$. 
However, such trivial situations are excluded by condition~(I) formulated in the next Section~\ref{ss.main}. Knowing 
that condition~(I) is satisfied, then the partition $\P_j|J$, $j\ge1$, can be thought of as the set of the 
(maximal) intervals of smooth monotonicity for $x_j:J\to[0,1]$ 
(in order that this is true one should also assume that $|x_j'(a)|>L$ for all $a$ contained in a partition element of $\P_j|J$).
We set $\P_0|J=J$ (and we will write $\P_j|\I$ instead of $\P_j|\inte(\I)$).
If for an interval $J'$ in the parameter space and for some integer $j\ge0$ the 
symbol $\ind_a(x_j(a))$ exists for all $a\in J'$, then it is constant and we denote this symbol by $\ind(x_j(J'))$.
Finally, in view of condition~(I) below, observe that if a parameter $a\in\I$ is contained in an element of 
$\P_j|\I$, $j\ge1$, then also the point $X(a)(=x_0(a))$ is contained in an element of $\P_j(a)$ which implies 
that $T_a^j$ is differentiable in $X(a)$.

\subsection{Main statement}
\label{ss.main}
In this section we will state our main result, Theorem~\ref{t.main}. Let $n$ be large. In order to have good distortion 
estimates we will, in the proof of Theorem~\ref{t.main}, split up the interval $\I$ into smaller intervals 
$J\subset\I$ of size $1/n$. The main idea in this paper is to switch from the map
$x_j:J\to[0,1]$, $j\le n$, to the map $T_a^j:J_x\to[0,1]$ where $a$ is the right boundary point of $J$ and $J_x$ is an 
interval of size $\approx 1/n$ oriented around $X(J)$ (assume $X:J\to[0,1]$ has no discontinuities). 
Since the dynamics of the map $T_a$ is well-understood, we derive similar dynamical properties for the map $x_j$, 
which then can be used to prove Theorem~\ref{t.main}. To be able to
switch from $x_j$ to $T_a^j$, we put three conditions, conditions~(I)-(III), on our one-parameter family and on the 
map $X$ associated to it.

In condition (I) we require that the derivatives of $x_j$ and
$T_a^j$ are comparable. This is the very basic assumption in this
paper. Of course, the choice of the map $X:\I\to[0,1]$ plays here an
important role. If, e.g., for every parameter $a\in\I$, $X(a)$ is a
periodic point for the map $T_a$, then $x_j$ will have bounded
derivatives and the dynamics of $x_j$ is completely different from the
dynamics of $T_a$. Henceforth, we will use the notations 
$$
T_a'(x)=\partial_xT_a(x)\qquad\text{and}\qquad x_j'(a)=D_a x_j(a),\quad j\ge1.
$$ 

\begin{itemize}
\item[(I)]
There is a constant $C_0\ge1$ such that  
for $\om\in\P_j|\I$, $j\ge1$, we have 
$$
\frac{1}{C_0}\le\left|\frac{x_j'(a)}{T_a^j\,'(X(a))}\right|\le C_0,
$$
for all $a\in\om$. Furthermore, the number of $a\in\I$, which are not contained in any element $\om\in\P_j|I$ is finite.
\end{itemize}

Given a $C^1$ map $Y:\I\to[0,1]$, as the following basic lemma asserts, to verify that there exists $j_0\ge0$ such that 
condition~(I) is satisfied for the map $X(a)=T_a^{j_0}(Y(a))$ it is sufficient to compute the $a$-derivative of $T_a^j(Y(a))$ 
for a finite number of $j$'s. This makes it easy to check condition~(I) numerically. The proof of this lemma is given in 
Section~\ref{ss.lem22}.

\begin{Lem}
\label{l.startcalc}
Assume that the parameter interval $\I$ is closed, let $Y:\I\to[0,1]$ be $C^1$, and denote $y_j(a)=T_a^j(Y(a))$. 
If there exists $j_0\ge0$ such that $y_{j_0}(a)\in K(a)$, $a\in\I$, 
and $y_{j_0}$ is piecewise $C^1$ (with finitely many pieces) such that
\begin{equation}
\label{eq.startcomp}
\inf_{a\in\I}|y_{j_0}'(a)|\ge\frac{\sup_{a\in\I}\sup_{x\in K(a)}|\partial_aT_a(x)|}{\la-1}+2L,
\end{equation}
then condition~(I) is satisfied for the map $X(a)=T_a^{j_0}(a)$.
\end{Lem}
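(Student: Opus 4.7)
The plan is to expand the chain rule for $x_j(a) = T_a^j(X(a)) = y_{j_0+j}(a)$ as a telescoping sum so that the ratio $x_j'(a)/T_a^j{}'(X(a))$ becomes visible explicitly. Set $M := \sup_{a\in\I}\sup_{x\in K(a)}|\partial_a T_a(x)|$. Since $X(a) = y_{j_0}(a)\in K(a)$ and $K(a)$ is forward-invariant under $T_a$, every $x_i(a)$ lies in $K(a)$. Fix $\om\in\P_j|\I$ and $a\in\om$: each $x_i(a)$, $0\le i<j$, avoids the partition points of $T_a$, so all derivatives below are well-defined, and $|T_a^i{}'(X(a))|\ge\la^i$ by \eref{eq.la}. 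Dividing the one-step identity $x_i'(a) = \partial_a T_a(x_{i-1}(a)) + T_a'(x_{i-1}(a))\,x_{i-1}'(a)$ by $T_a^i{}'(X(a))$ and telescoping gives
\begin{equation*}
\frac{x_j'(a)}{T_a^j{}'(X(a))} \;=\; X'(a) \;+\; \sum_{i=0}^{j-1}\frac{\partial_a T_a(x_i(a))}{T_a^{i+1}{}'(X(a))}.
\end{equation*}

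The tail is dominated in absolute value by $\sum_{i=0}^{\infty}M\la^{-i-1} = M/(\la-1)$, so by the reverse triangle inequality together with the hypothesis $|X'(a)| = |y_{j_0}'(a)| \ge M/(\la-1)+2L$,
\begin{equation*}
\left|\frac{x_j'(a)}{T_a^j{}'(X(a))}\right| \;\ge\; |X'(a)| - \frac{M}{\la-1} \;\ge\; 2L.
\end{equation*}
The matching upper bound $|x_j'/T_a^j{}'(X)|\le|X'(a)|+M/(\la-1)$ reduces the problem to a uniform bound on $|X'|$. Since $\I$ is closed and $Y$ is $C^1$, $C_Y := \sup_\I|Y'| < \infty$; iterating $|y_i'(a)| \le \La\,|y_{i-1}'(a)| + M$ on any piece of the piecewise $C^1$ structure of $y_{j_0}$ yields $|X'(a)| \le \La^{j_0}C_Y + M(\La^{j_0}-1)/(\La-1) =: C_X$. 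Setting $C_0 := \max\bigl(1/(2L),\,C_X + M/(\la-1)\bigr)$ then produces the inequality in Condition~(I).

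It remains to check that $\I\setminus\bigcup_{\om\in\P_j|\I}\om$ is finite, and I plan to do this by induction on $j$. Passing from $\P_{j-1}|\I$ (finitely many pieces by induction) to $\P_j|\I$ only adds the constraint $x_{j-1}(a) \notin \{b_0(a),\ldots,b_{p_0}(a)\}$; on each $\om \in \P_{j-1}|\I$ the telescoping estimate applied with $j$ replaced by $j-1$ gives $|x_{j-1}'(a)| \ge 2L\la^{j-1}$, which strictly exceeds the Lipschitz constant $L$ of $b_k$ from property~(i), forcing $a \mapsto x_{j-1}(a) - b_k(a)$ to be strictly monotone on $\om$ and hence to vanish at most once per piece. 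Summing over the finitely many pieces, labels $k$, and using the finiteness of the set $\C$ where $X$ fails to be $C^1$, closes the induction. The main obstacle is really only bookkeeping: one must recognize that the geometric convergence of the $\partial_a T_a$-tail, driven purely by the uniform expansion $\la>1$, is precisely the mechanism that propagates the hypothesized lower bound on $|y_{j_0}'|$ to every forward iterate, and one must separately pull the matching upper bound on $|X'|$ out of the $C^1$ regularity of $Y$ via the chain rule, since the piecewise $C^1$ hypothesis on $y_{j_0}$ alone does not make that bound transparent.
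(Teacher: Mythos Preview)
Your proof is correct and follows the same route as the paper's: both derive the telescoping identity for $x_j'(a)/T_a^j{}'(X(a))$, bound the tail geometrically by $M/(\la-1)$ to extract the lower bound $2L$ from \eref{eq.startcomp}, obtain a uniform upper bound (the paper does this by taking $k=0$ in the analogous formula for $y_j'/T_a^j{}'(Y)$, you by iterating the one-step recursion to bound $|X'|$ directly---equivalent bookkeeping), and then prove finiteness of the exceptional set by induction on $j$ using $|x_{j-1}'|>L$ against the $\Lip(L)$ bound on the $b_k$ from property~(i). One small caveat: in your iteration $|y_i'|\le\La|y_{i-1}'|+M$ the points $y_{i-1}(a)$ for $i\le j_0$ need not lie in $K(a)$, so $M$ should be replaced there by the global bound $L$ coming from property~(ii); the paper's upper bound in \eref{eq.startcalc2} has the same harmless sloppiness, and the fix is immaterial to the conclusion.
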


We turn to condition (II). For $a\in\I$, let $\varphi_a$ denote the density for $\mu_a$. We assume that 
$\varphi_a$ is uniformly in $a$ bounded away from $0$. 

\begin{itemize}
\item[(II)]
There exists a constant $C_1\ge1$ such that, for each $a\in\I$,
$$
\frac1{C_1}\le\varphi_a(x)\le C_1,
$$
for $\mu_a$ almost every $x\in[0,1]$. 
\end{itemize}

\begin{Rem}
\label{r.tomas}
In fact, instead of the uniformity of the constant $C_1$ one could $C_1$ allow to depend 
measurably on the parameter $a\in\I$. In Proposition~\ref{p.main} and its proof below, one could then consider 
closed sets $\I_\vep\subset\I$ with $|\I\setminus\I_\vep|\le\vep$ on which one has by Lusin uniformity of $C_1$. 
The proof of Proposition~\ref{p.main} restricted to such a set $\I_\vep$ could then be 
adapted by observing that for the intervals $J$ in the proof it is only important that the right boundary point of 
$J$ lies in $\I_\vep$. However, in all the examples we are considering in this paper the uniformity of $C_1$ is 
easy to establish.
\end{Rem}

While conditions~(I) and (II) are very natural requirements the following condition~(III) is more restrictive. 
However, as we will see in Sections~\ref{s.beta}-\ref{s.markov}, condition~(III) is satisfied for 
important families of piecewise expanding maps. In particular, in Section~\ref{s.unimodal} we will see that condition~(III) is 
satisfied for all non-degenerate families of piecewise expanding unimodal maps. 
For two non-empty sets $A_1,A_2\subset\real$, $\dist(A_1,A_2)$ denotes the infimum of the distances $|a_1-a_2|$ over all
possible points $a_1\in A_1$ and $a_2\in A_2$.

\begin{itemize}
\item[(III)]
There is a constant $C_2$ such that the following holds. 
For all $a_1,a_2\in\I$, $a_1\le a_2$, and $j\ge1$ there is a mapping
$$
\S_{a_1,a_2,j}:\P_j(a_1)\to\P_j(a_2),
$$
such that, for all $\om\in\P_j(a_1)$, the elements $\om$ and $\S_{a_1,a_2,j}(\om)$ have the same symbolic dynamics:
\begin{equation}
\label{eq.drei1}
\ind_{a_1}(T_{a_1}^i(\om))=\ind_{a_2}(T_{a_2}^i(\S_{a_1,a_2,j}(\om))),\quad0\le i<j, 
\end{equation}
their images lie close together:
\begin{equation}
\label{eq.dreidist}
\dist(T_{a_1}^j(\om),T_{a_2}^j(\S_{a_1,a_2,j}(\om)))\le C_2|a_1-a_2|,
\end{equation}
and the size of the image of $\om$ can be estimated above by the size of the image of $\S_{a_1,a_2,j}(\om)$:
\begin{equation}
\label{eq.drei2}
|T_{a_1}^j(\om)|\le C_2|T_{a_2}^j(\S_{a_1,a_2,j}(\om))|.
\end{equation}
\end{itemize}

Finally, we state the main result of this paper.

\begin{Thm}
\label{t.main}
Let $T_a:[0,1]\to[0,1]$, $a\in\I$, be a piecewise expanding one-parameter family as described in 
Section~\ref{ss.prel}, satisfying properties~(i)-(iii), and conditions~(II) and (III). If for a piecewise $C^1$ map $X:\I\to[0,1]$ 
condition~(I) is fulfilled, then $X(a)$ is typical for $\mu_a$ for Lebesgue almost every $a\in\I$.
\end{Thm}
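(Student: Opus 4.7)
\bigskip

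\noindent\textbf{Proof proposal.}
The plan is to reduce weak-$*$ convergence of the empirical measures to controlling, for each interval $B$ in a countable basis, the visit frequencies
$$
\frac{1}{n}\#\{0\le i<n\ ;\ x_i(a)\in B\}\longrightarrow\mu_a(B),\qquad\text{for a.e. }a\in\I.
$$
The engine of the proof is the parameter--space decorrelation estimate announced in \eref{eq.intromain1}: for $n$ large and for integers $1\le j_1<\dots<j_h\le n$ with $j_{k+1}-j_k\ge\sqrt n$,
\begin{equation}
\label{eq.planmain}
\frac{1}{|\I|}\bigl|\{a\in\I\ ;\ x_{j_1}(a)\in B,\dots,x_{j_h}(a)\in B\}\bigr|\le (C|B|)^h.
\end{equation}
Assuming \eref{eq.planmain}, I would expand a high moment of the centered Birkhoff sum,
$$
\int_\I\Bigl(\tfrac{1}{n}\sum_{i=0}^{n-1}\mathbf{1}_B(x_i(a))-\mu_a(B)\Bigr)^{2h}da,
$$
split the sum over $(i_1,\dots,i_{2h})$ into a $\sqrt n$-separated ``main term'' controlled by \eref{eq.planmain} and an ``off-diagonal'' error which, by a crude counting, contributes at most $n^{-h/2}\cdot\operatorname{const}^h$. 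Chebyshev together with Borel--Cantelli along a subsequence $n_k=2^k$ (and interpolation between consecutive $n_k$'s via property (ii) and the bound from (II)) then gives a.e.\ convergence of the empirical frequency for one fixed $B$. Applying the result to a countable family of intervals generating the Borel $\sigma$-algebra and using condition (II) to pass from indicator frequencies to integrals against continuous functions yields the weak-$*$ conclusion.

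The core of the argument is thus \eref{eq.planmain}. I would obtain it by summing \eref{eq.intromain3} over a partition of $\I$ into $\approx n$ intervals $J$ of length $1/n$ (chosen so that their right endpoints avoid the finite exceptional set in~(I) and so that $X|_J$ has no discontinuities; the finitely many bad $J$'s contribute $O(1/n)$ and are negligible). On such an interval $J$ with right endpoint $\alpha$, the first inequality in \eref{eq.intromain3} is the transfer from parameter to phase space. For each $\omega\in\P_j|J$, condition (I) identifies $|x_j'(a)|$ with $|T_a^{j}{}'(X(a))|$ up to the constant $C_0$, so bounded distortion for $T_a^j$ on $\omega$ (Lemma~\ref{l.distortion1}) gives bounded distortion for $x_j$ on $\omega$; applied to the pull-back of $B$ this converts $|\{a\in\omega\ ;\ x_j(a)\in B\}|$ into a length estimate on the phase space of the single map $T_\alpha$. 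Condition~(III) then identifies the corresponding piece in $\P_j(\alpha)$ via $\mathcal U_{a,\alpha,j}$, and the displacement \eref{eq.dreidist} and size comparison \eref{eq.drei2} turn the constraint ``$x_j(a)\in B$'' into ``$T_\alpha^{j}(x)\in 2B$'' after absorbing the $O(1/n)$ parameter drift into the doubling from $B$ to $2B$ (for $|B|\lesssim 1/n$ the trivial bound $|J|$ suffices and the final statement of the theorem is unaffected).

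The second inequality in \eref{eq.intromain3}, the phase-space estimate
$$
\bigl|\{x\in J_x\ ;\ T_\alpha^{j_1}(x)\in 2B,\dots,T_\alpha^{j_h}(x)\in 2B\}\bigr|\le\tfrac{1}{n}(C|B|)^h,
$$
is where expansion and bounded densities carry the weight. The starting point is \eref{eq.intromain21} (a consequence of (II) and Perron--Frobenius): for any interval $E$ and any $j\ge 1$, $|T_\alpha^{-j}(E)\cap K(\alpha)|\lesssim|E|$. I would iterate this inductively on $k$: having localized $x$ to a union of monotonicity pieces of $T_\alpha^{j_k}$ with $T_\alpha^{j_k}(x)\in 2B$, apply the one-step bound to the image under $T_\alpha^{j_{k+1}-j_k}$ to gain another factor of $C|B|$. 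This iteration is clean only when each surviving piece of $\P_{j_k}(\alpha)$ has macroscopic image in $K(\alpha)$, so that its further refinement under $T_\alpha^{j_{k+1}-j_k}$ truly fills space; the factor $1/n$ on the right comes from the initial localization to $J_x$ and the $h$-fold product comes from the iteration.

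The main obstacle, and the only place where the $\sqrt n$ gap is essential, is the presence of ``exceptional'' monotonicity intervals of $T_\alpha^{j_k}$ whose image is much smaller than $K(\alpha)$. For such pieces the one-step bound is wasteful and the inductive scheme threatens to lose a factor. The resolution, which I would isolate as Lemma~\ref{l.remainderintervals}, is to show that the total Lebesgue measure of the union of these small-image pieces decays exponentially in the number of iterates since their creation, so that between $j_k$ and $j_{k+1}$ they ``recover'': the $\sqrt n$ buffer is precisely enough time for any small image at step $j_k$ to grow back to comparable-to-$1$ size by step $j_{k+1}$, after which the standard one-step bound applies. Handling this small-image bookkeeping carefully, in particular verifying that the recovery estimate is uniform in $\alpha\in\I$ thanks to the uniform constants $\la,\La,L,\delta_0,C_1$, is the main technical step; everything else in the proof, including the distortion estimates, the partition accounting, and the final Borel--Cantelli, is then a routine assembly.
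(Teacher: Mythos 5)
Your description of the engine of the proof — the $\sqrt n$-separated decorrelation estimate, the subdivision of $\I$ into length-$1/n$ intervals $J$, the transfer from $\P_j|J$ to $\P_j(\a)$ via condition~(III) plus distortion, the Perron--Frobenius bound \eref{eq.intromain21} for the phase-space step, and the treatment of small-image pieces via a lemma of type Lemma~\ref{l.remainderintervals} exploiting the $\sqrt n$-gap — matches the paper's Proposition~\ref{p.main} and its proof closely, and these ingredients are correctly identified.

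However, your top-level statistical argument has a genuine gap. You propose expanding the \emph{centered} high moment
$\int_\I\bigl(\tfrac1n\sum_i \mathbf 1_B(x_i(a))-\mu_a(B)\bigr)^{2h}\,da$
and controlling the $\sqrt n$-separated main term by \eref{eq.planmain}. But \eref{eq.planmain} is only a one-sided estimate: it bounds $\bigl|\{a : x_{j_1}(a)\in B,\dots,x_{j_h}(a)\in B\}\bigr|$ from \emph{above} by $(C|B|)^h|\I|$. To get cancellation in the centered moment you would need the two-sided statement
$\bigl|\{a : x_{j_1}(a)\in B,\dots,x_{j_h}(a)\in B\}\bigr|\approx \int_\I\mu_a(B)^h\,da$,
i.e.\ genuine decorrelation with the correct leading constant, and that is considerably stronger than anything the machinery you outline provides. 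Without cancellation, the $2h$-th centered moment does not go to zero as $n\to\infty$ (it stays of size $(C|B|)^{2h}$, which is $n$-independent), so Chebyshev gives nothing, and the Borel--Cantelli along $n_k=2^k$ step fails. The paper deliberately avoids this problem: it bounds the \emph{uncentered} $h$-th moment $\int_\I F_n^h\,da\le\operatorname{const}(C|B|)^h$ and invokes a Borel--Cantelli-type lemma (Lemma~A.1 of \cite{bs}) to deduce only the one-sided conclusion $\varlimsup_{n} F_n(a)\le C|B|$ for a.e.\ $a$. That one-sided bound, applied to every $B$ in the countable basis, forces every weak-$*$ subsequential limit $\nu_a$ of $\frac1n\sum_{j=1}^n\delta_{x_j(a)}$ to have density bounded by $C$, hence to be absolutely continuous; since $\nu_a$ is automatically $T_a$-invariant and the a.c.i.p.\ is unique, $\nu_a=\mu_a$ and convergence follows. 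This soft step is the reason the upper bound \eref{eq.planmain} suffices, and it is precisely what your centered-moment route is missing.
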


In the considered examples below, we will usually not verify conditions (I)-(III)
for the whole interval $\I$ for which the corresponding 
family is defined. Instead we will cover $\I$ by a countable number of smaller intervals 
and verify these conditions on these smaller intervals.

\subsection{Proof of Lemma~\ref{l.startcalc}}
\label{ss.lem22}
Let $j\ge1$ and assume in the following formulas that, for the parameter values $a\in\I$ under 
consideration, $y_j$ and $T_a^j$ are differentiable in $a$ and $Y(a)$, respectively.
For $0\le k<j$ we have 
\begin{equation}
\label{eq.startcalc1}
y_j'(a)=T_a^{j-k}\,'(y_k(a))y_k'(a)+\sum_{i=k+1}^jT_a^{j-i}\,'(y_i(a))\partial_aT_a(y_{i-1}(a)),
\end{equation}
which implies
\begin{equation}
\label{eq.startcalc}
\frac{y_j'(a)}{T_a^j\,'(Y(a))}
=\frac{1}{T_a^k\,'(Y(a))}\left(y_k'(a)+\sum_{i=k+1}^j\frac{\partial_aT_a(y_{i-1}(a))}{T_a^{i-k}\,'(y_k(a))}\right).
\end{equation}
For $j>j_0$, choosing $k=0$ and $k=j_0$, respectively, we get the following upper and lower bounds: 
\begin{equation}
\label{eq.startcalc2}
\frac{2L}{|T_a^{j_0}\,'(Y(a))|}\le\left|\frac{y_j'(a)}{T_a^j\,'(Y(a))}\right|\le
\sup_{a\in\I}\left(|Y'(a)|+\frac{\sup_{x\in K(a)}|\partial_aT_a(x)|}{\la-1}\right),
\end{equation}
where for the lower bound we used the assumption \eref{eq.startcomp}. Setting $X(a)=y_{j_0}(a)$, 
this implies the existence of a constant $C_0$ as required in condition~(I). 
It is only left to show that for each $j\ge 1$ the number of $a\in\I$ which 
are not contained in any element $\om\in\P_j|\I$ is finite (the partition $\P_j|\I$ is taken w.r.t. the map $X(a)$). 
This is easily done by induction over $j$. 
By the assumption in Lemma~\ref{l.startcalc}, $y_{j_0}:\I\to[0,1]$ is not differentiable only in a finite number of points 
and, further, $y_{j_0}(a)\in K(a)$. Since $y_{j_0}'(a)>L$ and since by property (i) the boundary points $b_k(a)$ are $\Lip(L)$, 
we have that $y_{j_0}(a)\in K(a)\setminus\{b_1(a),...,b_k(a)\}$ for all but finitely many $a\in\I$.
Thus, the number of $a\in\I$ which are not contained in any element $\om\in\P_1|\I$ is finite. 
Assume that $j\ge1$ and consider the partition $\P_{j+1}|\I$. From the lower bound in \eref{eq.startcalc2}, we derive that 
$|x_j'(a)|=|y_{j_0+j}'(a)|\ge\la^{j-j_0}2L>L$ for all $a$ contained in an element of $\P_j|\I$. 
As above we derive that only a finite number of $a$ contained in an element of $\P_j|\I$ can be mapped by $x_{j+1}$ to a 
partition point $b_k(a)$, $1\le k\le p_0$. This concludes the proof of Lemma~\ref{l.startcalc}.

\section{Proof of Theorem~\ref{t.main}}
\label{s.proof}

The idea of the proof of Theorem~\ref{t.main} is inspired by Chapter III in 
\cite{bc} where Benedicks and Carleson prove the existence of an a.c.i.p. for a.e. parameter in a certain 
parameter set (the set $\Delta_{\infty}$). Their argument implies that the critical point is in fact typical 
for this a.c.i.p. 

Let 
$$
\mathcal{B}:=\left\{(q-r,q+r)\cap[0,1]\ ;\ q\in\rational, r\in\rational^+\right\}.
$$
We will show that there is a constant $C\ge1$ such that for each $B\in\mathcal{B}$ the function 
$$
F_n(a)=\frac{1}{n}\sum_{j=1}^{n}\chi_B(x_j(a)),
\quad n\geq1,
$$
fulfills
\begin{equation}
\label{eq.limsup}
\varlimsup_{n\to\infty} F_n(a)\leq C|B|,\qquad\text{for a.e. }a\in\I.
\end{equation}
By standard measure theory (see, e.g., \cite{m}), ~\eref{eq.limsup} implies that, for a.e. $a\in\I$, 
every weak-$*$ limit point $\nu_a$ of
\begin{equation}
\label{eq.weakstarlimit}
\frac{1}{n}\sum_{j=1}^n\delta_{x_j(a)},
\end{equation}
has a density which is bounded above by $C$. In particular, $\nu_a$ is absolutely continuous. 
Observe that, by the definition of $x_j(a)$, the measure $\nu_a$ is also invariant for $T_a$ and, hence,  
$\nu_a$ is an a.c.i.p. for $T_a$. By the uniqueness of the a.c.i.p. for $T_a$, we finally derive that, 
for a.e. $a\in\I$, the weak-$*$ limit of \eref{eq.weakstarlimit} exists and
coincides with the a.c.i.p. $\mu_a$. This concludes the proof of Theorem~\ref{t.main}.

In order to prove ~\eref{eq.limsup}, it is sufficient to show that 
for all (large) integers $h\geq1$ there is an integer $n_{h,B}$, growing for fixed $B$ at most exponentially in $h$, 
such that
$$
\int_{\I}F_n(a)^h da\le\const(C|B|)^h,
$$ for all $n\ge n_{h,B}$ (see Lemma~A.1 in \cite{bs}). 

In the remaining part of this section, we assume that $B\in\mathcal{B}$ is fixed. For $h\geq1$, we have
\begin{equation}
\label{eq.integral}
\int_{\I}F_n(a)^h da=\sum_{1\leq j_1,...,j_h\leq n}\frac{1}{n^h}\int_{\I}\chi_B(x_{j_1}(a))
\cdot\cdot\cdot\chi_B(x_{j_h}(a))da.
\end{equation}
Observe that for a fixed parameter $a$, there exist an integer $k$ and a set $A\subset K(a)$ (which is the union of finitely many intervals) 
such that the system $T_a^k:A\to A$ with the measure $\mu_a$ is exact and, hence, it  
is mixing of all degrees (see \cite{rohlin} and \cite{wagner}). It follows that for sequences 
of non-negative integers $j_1^r,...,j_h^r$, $r\geq1$, with 
$$
\lim_{r\to\infty}\inf_{i\neq l}|j_i^r-j_l^r|=\infty,
$$
one has 
\begin{multline}
\label{m.exact}
\int_{A}\chi_B\Big(T_a^{kj_1^r}(x)\Big)\cdot\cdot\cdot\chi_B\left(T_a^{kj_h^r}(x)\right)d\mu_a(x)\\
=\mu_a\left(T_a^{-kj_1^r}(B)\cap...\cap T_a^{-kj_h^r}(B)\cap A\right)
\overset{r\to\infty}{\longrightarrow}\mu_a(B\cap A)^h\leq\left(\|\varphi_a\|_\infty|B|\right)^h.
\end{multline}
Since the maps $T_a^j$ and $x_j$ are, by conditions (I)-(III), 'comparable' it is natural to expect 
similar mixing properties for the maps $x_j$. In fact, in the Section~\ref{ss.proofpropmain}, we are going to prove 
the following statement.

\begin{Prop}
\label{p.main}
Assume that conditions (I)-(III) are satisfied. 
Disregarding a finite number of parameter values in $\I$, we can cover 
$\I$ by a countable number of intervals $\tilde{\I}\subset\I$ such that for each such interval $\tilde{\I}$ 
there is a constant $C\ge1$ such that the following holds. 
For all $h\geq1$, there is an integer $n_{h,B}$ growing at most exponentially in $h$ such that, for all 
$n\ge n_{h,B}$ and for all integer $h$-tuples $(j_1,...,j_h)$ with $\sqrt{n}\le j_1<j_2<...<j_h\leq n-\sqrt{n}$ and 
$j_l-j_{l-1}\ge\sqrt{n}$, $l=2,...,h$, 
$$
\int_{\tilde{\I}}\chi_B(x_{j_1}(a))\cdot\cdot\cdot
\chi_B(x_{j_h}(a))da\le(C|B|)^h.
$$
\end{Prop}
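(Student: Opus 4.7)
The strategy, following the sketch in the introduction, is to cover $\I$ (minus the finite set of non-differentiability points of $X$ and parameter values not lying in any element of $\P_1|\I$) by intervals $\tilde{\I}$ short enough that $X$ is $C^1$ on $\tilde{\I}$, that the symbolic itinerary $\ind(x_0(\tilde{\I}))$ is constant, and that conditions~(I)--(III) together with the bounded-distortion estimate of Lemma~\ref{l.distortion1} apply uniformly. For a fixed such $\tilde{\I}$ and large $n$, I partition $\tilde{\I}$ into subintervals $J$ of length $\asymp 1/n$; for each $J$ write $\al = \sup J$ and choose $J_x\subset[0,1]$ of length $\asymp 1/n$ containing the image $X(J)$. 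The local goal on each $J$ is a bound of the form
\begin{equation*}
|\{a\in J\,:\,x_{j_1}(a)\in B,\ldots,x_{j_h}(a)\in B\}|\le \frac{C'}{n}(C|B|)^h,
\end{equation*}
and summation over the $\asymp n|\tilde{\I}|$ subintervals $J$ then yields the proposition.

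The transfer from parameter space to phase space is the content of the first inequality in~\eref{eq.intromain3}: I compare the set above with
\begin{equation*}
|\{x\in J_x\,:\,T_\al^{j_1}(x)\in 2B,\ldots,T_\al^{j_h}(x)\in 2B\}|.
\end{equation*}
Condition~(I) identifies $|x_{j_i}'(a)|$ with $|T_\al^{j_i}{}'(X(a))|$ up to the constant $C_0$, and Lemma~\ref{l.distortion1} upgrades this pointwise comparison to a bounded-distortion comparison of the two Lebesgue measures on $J$ and $J_x$. Condition~(III), applied with $a_1\in J$ and $a_2=\al$, guarantees that each $x_{j_i}(a)$ lies within $C_2|J|\le C_2/n$ of the $T_\al^{j_i}$-image of the corresponding point in $J_x$ while preserving the symbolic itinerary; choosing $n\ge n_{h,B}$ so that $1/n\ll|B|$ absorbs this error into the inflation of $B$ to $2B$. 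Estimate~\eref{eq.drei2} of condition~(III) additionally ensures that no image shrinks in the wrong direction when passing from $J$ to $\al$.

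On the phase-space side I iterate over the indices $j_h > j_{h-1} > \cdots > j_1$ from the top. At step $l$ I decompose the current admissible set according to the maximal monotonicity pieces $\om\in\P_{j_l-j_{l-1}}(\al)$. For pieces whose $T_\al^{j_l-j_{l-1}}$-image is large compared to $|B|$, bounded distortion together with condition~(II) and the $T_\al$-invariance of $\mu_\al$ yields the per-step factor $C|B|$ via
\begin{equation*}
|\om\cap T_\al^{-(j_l-j_{l-1})}(2B)|\le C_1^2|\om|\cdot\mu_\al(2B)\le 2C_1^2|\om||B|.
\end{equation*}
The main obstacle is the contribution of the exceptionally small monotonicity pieces, whose image is much smaller than $|B|$ and for which the per-step factor $C|B|$ cannot be obtained directly; this is precisely where the hypothesis $j_l-j_{l-1}\ge\sqrt n$ is used, since the expansion $\la^{\sqrt n}$ lets such small pieces recover at the next iterate. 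The total measure of these exceptional pieces is controlled by Lemma~\ref{l.remainderintervals}, and their contribution is absorbed into $C$. Iterating $h$ times gives the per-$J$ bound, and $n_{h,B}$ can be taken polynomial in $h$ and $1/|B|$, which is certainly at most exponential in $h$ as required.
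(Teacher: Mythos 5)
Your proposal is essentially correct and follows the same approach as the paper: cover $\I$ by suitable subintervals $\tilde{\I}$, subdivide into parameter windows $J$ of length $1/n$, transfer the estimate to phase space at the right endpoint $\al$ of $J$ via Lemma~\ref{l.distortion1} (using conditions~(I) and (III)), iterate over the hit times $j_1,\dots,j_h$ with a per-step factor $\asymp|B|$ obtained from the Perron--Frobenius bound and condition~(II), and absorb the monotonicity pieces with too-small images by Lemma~\ref{l.remainderintervals}, whose applicability hinges on the $\sqrt{n}$ gaps. The only cosmetic difference is that you iterate over the $j_l$ from the top while the paper sets up nested sets $\Omega_0\supset\Omega_1\supset\cdots\supset\Omega_h$ from the bottom, and the per-step inequality you write is a loose paraphrase of the paper's inequality~\eref{eq.anteil}, which carries the explicit factor $\delta^{-1}$ coming from normalizing by $|T_\al^j(\ome)|\ge\delta$; but these are presentational, not substantive, departures.
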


Seen from a more probabilistic point of view, Proposition~\ref{p.main} says that whenever the distances between 
consecutive $j_i$'s are sufficiently large, the behavior of the $\chi_B(x_{j_i}(.))$'s is comparable to that 
of independent random variables.
Now, the number of $h$-tuples $(j_1,...,j_h)$ in the sum in ~\eref{eq.integral}, for which $\min_ij_i<\sqrt{n}$ or 
$\min_{k\neq l}|j_{k}-j_{l}|<\sqrt{n}$, is bounded by $2h^2n^{h-1/2}$. Hence, by Proposition~\ref{p.main}, we obtain
$$
\int_{\tilde{\I}}F_n(a)^h da\le(C|B|)^h+\frac{2h^2}{\sqrt{n}}|\tilde{\I}|\le2(C|B|)^h,
$$
whenever 
$$
n\geq\max\left\{n_{h,B},
\left(\frac{2h^2|\tilde{\I}|}{(C|B|)^h}\right)^2\right\}.
$$ 
Since both terms in this lower bound for $n$ grow at most exponentially in $h$, this implies \eref{eq.limsup}, 
for a.e. $a\in\tilde{\I}$. This concludes the proof of Theorem~\ref{t.main}.

\subsection{Proof of Proposition~\ref{p.main}}
\label{ss.proofpropmain}
First we cover the parameter interval $\I$ by smaller intervals $\tilde{\I}$ which will be more convenient 
in the proof of Lemma~\ref{l.remainderintervals} below. Fix an integer $t_0$ so large that $2^{1/t_0}\le\sqrt{\la}$. 
For $a\in\I$, let
$$
\delta(a)=\min\{|\om|\ ;\ \om\in\P_{t_0}(a)\}>0.
$$
Since condition~(III) holds, we can argue as in the proof of Lemma~\ref{l.app1} (see inequality~\eref{eq.kaap}) and, 
disregarding a finite number of parameter values in $\I$, we can cover 
$\I$ by a countable number of closed intervals $\tilde{\I}\subset\I$ such that for each such interval $\tilde{\I}$ there is a constant 
$\delta=\delta(\tilde{\I})>0$ such that  
\begin{equation}
\label{eq.kaap1}
\delta(a)\ge\delta,
\end{equation} 
for all $a\in\tilde{\I}$. We assume also that $\tilde{\I}$ is chosen such that $X:\tilde{\I}\to[0,1]$ is 
$C^1$ (and not piecewise $C^1$).
Henceforth, we fix such an interval $\tilde{\I}$. Instead of $\tilde{\I}$ we will write again $\I$.

Conditions (I)-(III) enables us to switch from maps on the parameter space to maps on the dynamical interval. 
In order to have good distortion estimates, we split up the integral in Proposition~\ref{p.main} and 
integrate over smaller intervals of length $1/n$. 
In the following, we are going to show that there exist a constant $C\ge1$ and an 
integer $n_{h,B}$ growing at most exponentially in $h$ such that, for $n\ge n_{h,B}$, we have
\begin{equation}
\label{eq.JJJ}
\int_{J}\chi_B(x_{j_1}(a))\cdot\cdot\cdot\chi_B(x_{j_h}(a))da\le|J|(C|B|)^h,
\end{equation}
for all intervals $J\subset\I$ of length $1/n$ and all $h$-tuples $(j_1,...,j_h)$ as described in Proposition~\ref{p.main}.
If $n_{h,B}\gg |I|^{-1}$, this immediately implies that, for $n\ge n_{h,B}$,
$$
\int_{\I}\chi_B(x_{j_1}(a))\cdot\cdot\cdot\chi_B(x_{j_h}(a))da\le|\I|(C|B|)^h,
$$
which concludes the proof of Proposition~\ref{p.main} (where one has to adapt the constant $C$ in 
the statement of Proposition~\ref{p.main} to $\max\{|\I|C,C\}$).

Let $n$ be large and $J\subset\I$ an open interval of size $1/n$.
Note that by condition~(I), for $j\ge1$, there are only finitely many parameter values not contained in any element of $\P_j|J$. Hence, 
we can neglect such parameter values and focus on the partitions $\P_j|J$. Consider the set 
$$
\Omega_J=\{\om\in\P_n|J\ ;\ x_{j_i}(\om)\cap B\neq\emptyset,\ 1\le i\le h\}.
$$
Obviously if we show that $|\Omega_J|\le|J|(C|B|)^h$, this implies \eref{eq.JJJ}. 
Our strategy of establishing this estimate for $\Omega_J$ is to compare the elements in $\Omega_J$ with elements in the partition 
$\P_n(\a)$ where $\a$ denotes the right boundary point of $J$. 
The following lemma will allow us to switch from partitions on the parameter space to partitions on the phase space. 
Its proof is given in Section~\ref{ss.distortion1}.

\begin{Lem}
\label{l.distortion1}
Under the assumption that conditions (I) and (III) are satisfied, there is an integer $q\ge1$ and a constant $C'\ge1$ such that 
for all open intervals $J\subset\I$ of length $1/n$ the following holds. There is an at most $q$-to-one map
$$
\S_J:\P_n|J\to\P_n(\a),
$$
such that, for $\om\in\P_n|J$, the images of $\om$ and $\S_J(\om)$ are close:
\begin{equation}
\label{eq.sun2}
\dist(x_j(\om),T_{\a}^j(\S_J(\om)))\le C'/n,\quad\text{for}\ 0\le j\le n-\sqrt{n},
\end{equation}
and the size of $\om$ is controlled by the size of $\S_J(\om)$:
\begin{equation}
\label{eq.sun}
|\om|\le C'|\S_J(\om)|.
\end{equation}
\end{Lem}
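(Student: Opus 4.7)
The plan is to construct $\S_J$ by matching symbolic itineraries and to deduce the two estimates by combining a shadowing argument with condition~(III) and bounded distortion; the at-most-$q$-to-one claim will be the main obstacle. For $\om\in\P_n|J$, condition~(I) ensures $X(a)$ lies in a unique $\om_a\in\P_n(a)$ for every $a\in\om$, and the itinerary $\sigma(\om):=(\ind_a(x_i(a)))_{i=0}^{n-1}$ is independent of $a\in\om$. The natural definition is
$$
\S_J(\om):=\S_{a,\a,n}(\om_a)\in\P_n(\a),
$$
which by \eref{eq.drei1} has itinerary $\sigma(\om)$; since every element of $\P_n(\a)$ is uniquely determined by its length-$n$ itinerary, this is independent of the choice of $a\in\om$.

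For \eref{eq.sun2}, fix $a\in\om$ and set $z_j:=x_j(a)$. The common itinerary keeps $z_j$ in a single branch $D_{k_j}$ of both $T_a$ and $T_\a$, so by property~(ii),
$$
|T_\a(z_j)-z_{j+1}|=|T_\a(z_j)-T_a(z_j)|\le L|a-\a|\le L/n,
$$
and $(z_j)_{j=0}^n$ is a $T_\a$-pseudo-orbit of accuracy $L/n$. The shadowing lemma for uniformly $\la$-expanding maps then produces a genuine $T_\a$-orbit $(T_\a^j(y))_{j=0}^n$ with $|T_\a^j(y)-z_j|\le C'/n$ for all $0\le j\le n$, where $C'$ depends only on $L$ and $\la$. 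Once $n$ is large enough that $C'/n<\delta$ (cf.~\eref{eq.kaap1}), the length-$n$ itinerary of $y$ coincides with $\sigma(\om)$, hence $y\in\S_J(\om)$. Letting $a$ range over $\om$ yields \eref{eq.sun2}.

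For \eref{eq.sun}, by the mean value theorem, $|\om|=|x_n(\om)|/|x_n'(\xi)|$ for some $\xi\in\om$. Condition~(I) gives $|x_n'(\xi)|\ge|T_\xi^n{}'(X(\xi))|/C_0$; the standard bounded-distortion estimate for the piecewise $C^{1,1}(L)$ expanding map $T_\xi^n$ on $\om_\xi$ yields $|T_\xi^n{}'(X(\xi))|\gtrsim|T_\xi^n(\om_\xi)|/|\om_\xi|$; condition~(III) gives $|T_\xi^n(\om_\xi)|\le C_2|T_\a^n(\S_J(\om))|$ and, applied symmetrically, $|\om_\xi|\gtrsim|\S_J(\om)|$. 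The shadowing estimate at $j=n$ also yields $|x_n(\om)|\le|T_\a^n(\S_J(\om))|+O(1/n)$. Chaining these inequalities delivers \eref{eq.sun}.

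The at-most-$q$-to-one property is the main obstacle. Two distinct $\om_1,\om_2\in\P_n|J$ with $\S_J(\om_1)=\S_J(\om_2)$ necessarily share the entire length-$n$ itinerary, so the itinerary map $a\mapsto(\ind_a(x_i(a)))_{i=0}^{n-1}$ must leave and re-enter $\sigma(\om_1)$ as $a$ crosses $J$ from $\om_1$ to $\om_2$. Since by condition~(I) each $|x_j'|$ is much larger than the Lipschitz bound $L$ on the partition curves $b_k$, on each element of $\P_j|J$ the equation $x_j(a)=b_k(a)$ admits at most one root. An inductive bookkeeping on the refinement tree $\P_1|J\prec\P_2|J\prec\ldots\prec\P_n|J$, combined with the smallness $|J|=1/n$, is what will limit the number of components in each itinerary fiber by a constant $q=q(\la,C_0,L,p_0)$ independent of $n$, thereby completing the proof.
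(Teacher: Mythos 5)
Your definition of $\S_J$ and the overall plan match the paper's, but the execution has three genuine gaps.

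First, for \eref{eq.sun2}, the shadowing argument has a hole you do not close. You construct a true $T_\a$-orbit $(T_\a^j(y))$ that $C'/n$-shadows the pseudo-orbit $(z_j)$, but you still need $y\in\S_J(\om)$, i.e.\ that $y$ has the same length-$n$ itinerary as $\om$. Closeness to a pseudo-orbit does not give this: $z_j$ may itself lie within $C'/n$ of a branch boundary, in which case the shadow may cross to a different branch. Your appeal to \eref{eq.kaap1} (which bounds the size of monotonicity intervals, not the distance of $z_j$ from their endpoints) does not repair this. The paper's route avoids the issue entirely: condition~(III) hands you the element $\S_J(\om)\in\P_n(\a)$ with matching itinerary, and the distance estimate \eref{eq.distance} in the proof of Lemma~\ref{l.distortion} is then propagated along the already-paired orbits, with no separate itinerary-matching step needed.

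Second, the chain for \eref{eq.sun} does not close. Condition~(III) is one-sided (stated for $a_1\le a_2$ and bounding $|T_{a_1}^j(\om)|$ by $|T_{a_2}^j(\S_{a_1,a_2,j}(\om))|$), and it controls image sizes, not pre-image sizes; there is no "applied symmetrically" that yields $|\om_\xi|\gtrsim|\S_J(\om)|$. The ingredient you are missing is the cross-parameter distortion estimate of Lemma~\ref{l.distortion}, comparing $|T_\xi^n{}'|$ on $\om_\xi$ with $|T_\a^n{}'|$ on $\S_J(\om)$. With that, the argument runs as in the paper: first $|x_n(\om)|\lesssim|T_\a^n(\S_J(\om))|$ from \eref{eq.drei2}, \eref{eq.dreidist} and condition~(I) (absorbing the $O(\la^{-n})$ self-reference), and then $|\om|\approx|x_n(\om)|/|x_n'|\lesssim|T_\a^n(\S_J(\om))|/|T_\a^n{}'|\approx|\S_J(\om)|$ using condition~(I) and Lemma~\ref{l.distortion}.

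Third, the $q$-to-one claim, which you yourself flag as the main obstacle, is only gestured at. The refinement-tree bookkeeping you sketch is not carried out, and the role of $|J|=1/n$ is not actually what saves the day. The paper's argument is short: fix $l_0$ so large that $|x_j'(a)|\ge L$ for $j\ge l_0$ whenever the derivative is defined; since the partition curves $b_k(a)$ are $\Lip(L)$, on any $\ome\in\P_{l_0}|J$ the itinerary map is injective beyond level $l_0$, so $\S_J$ restricted to $\P_n|\ome$ is one-to-one, and $q=\#\{\om\in\P_{l_0}|\I\}$ works. Note this bound is a global constant depending only on $l_0$ and $\I$, not on $n$ or on $|J|$.
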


Let $J_x\subset[0,1]$ be the interval obtained by the interval having the same midpoint as $X(J)$ and having size 
$|X(J)|+3C'n^{-1}$ intersected with the support $K(\a)$ of the a.c.i.p. for $T_{\a}$. 
(Observe that $\I$ is chosen such that $X:\I\to[0,1]$ has no discontinuities.) 
Lemma~\ref{l.distortion1} allows us to switch from considering $\Omega_J$ to considering the set 
$$
\Omega=\{\om\in\P_n(\a)|J_x\ ;\ T_{\a}^{j_i}(\om)\cap2B\neq\emptyset,\ 1\le i\le h\},
$$
where $2B$ denotes the interval twice as long as $B$ and having the same midpoint as $B$. 
Indeed if $n\gg|B|$ then, by \eref{eq.sun2}, for all $\om\in\Omega_J$, there exists $\om'\in\Omega$ 
such that $\om'=\S_J(\om)$. Thus,  by \eref{eq.sun}, and since the map $\S_J:\Omega_J\to\Omega$ is at most $q$-to-one we obtain
$|\Omega_J|\le qC'|\Omega|$.
Fix an integer $\ta\ge1$ such that $\la^{-\ta}\le|B|/2$, and assume that $n$ is 
so large that $\sqrt{n}\ge\ta$, which ensures that there are at least $\ta$ iterations between two
consecutive $j_i$'s (and between $j_h$ and $n$).
Let $\Omega_0=J_x$ and, for $1\le i\le h$, set 
$$
\Omega_i=\{\om\in\P_{j_i+\ta}(\a)|\Omega_{i-1}\ ;\ T_{\a}^{j_i}(\om)\cap2B\neq\emptyset\}.
$$
Clearly, $\Omega\subset\Omega_h$. Notice that, by the assumption on $\tau$, we have $|T_{\a}^{j_i}(\om)|\le|B|/2$ for 
all $\om\in\P_{j_i+\tau}(\a)$ and, thus,
$$
\Omega_i\subset\{x\in\Omega_{i-1}\ ;\ T_{\a}^{j_i}(x)\in3B\}.
$$ 

Note that, since the density $\varphi_{\a}$ is a fixed point of the Perron-Frobenius operator, we have, for $k\ge1$, 
$$
\varphi_{\a}(y)
            =\sum_{\begin{subarray}{c}
              x\in K(\a)\\
              T_{\a}^k(x)=y
             \end{subarray}}
  \frac{\varphi_{\a}(x)}{|T_{\a}^k\,'(x)|},
$$
for a.e. $y\in K(\a)$. By condition~(II), we get
\begin{equation}
\label{eq.perronfrobenius}
\sum_{\begin{subarray}{c}
              x\in K(\a)\\
              T_{\a}^k(x)=y
            \end{subarray}}
  \frac{1}{|T_{\a}^k\,'(x)|}\le C_1^2,
\end{equation}
for a.e. $y\in K(\a)$. For $\ome\in\P_j(\a)$, $j\ge1$, it follows that 
\begin{align*}
|T_{\a}^j(\{x\in\ome\ ;\ T_{\a}^{j+k}(x)\in3B\})|&=\int_{3B}\sum_{\begin{subarray}{c}
     x\in\ome \\
     T_{\a}^{j+k}(x)=y
     \end{subarray}}
     \left|\frac{T_{\a}^j\,'(x)}{T_{\a}^{j+k}\,'(x)}\right|dy\\
     &\le \int_{3B}\sum_{\begin{subarray}{c}
              x\in K(\a)\\
              T_{\a}^k(x)=y
            \end{subarray}}
  \frac{1}{|T_{\a}^k\,'(x)|}dy\le3C_1^2|B|.
\end{align*}
By Lemma~\ref{l.distortion} (where we set $a_1=a_2=\a$), which is stated and proven in 
the next section, we get the distortion estimate
$|T_{\a}^{j}\,'(x)|/|T_{\a}^{j}\,'(x')|\le C_3$, for $x,x'\in\ome$.
Now if $|T_{\a}^j(\ome)|\ge\delta$ then we get
\begin{align}
\label{eq.anteil}
\nonumber
|\{x\in\ome\ ;\ T_{\a}^{j+k}(x)\in3B\}|
 &\le C_3\frac{|T_{\a}^{j}(\{x\in\ome\ ;\ T_{\a}^{j+k}(x)\in3B\})|}{|T_{\a}^{j}(\ome)|}|\ome|\\ 
 &\le\frac{3C_1^2C_3}{\delta}|B||\ome|.
\end{align}

In the remaining part of this section let $\tilde{C}=3C_1^2C_3\delta^{-1}$. 
In order to apply \eref{eq.anteil}, we will exclude in each set $\Omega_i$ certain intervals with too short images. 
To this end we define, for $0\le i\le h-1$, 
the following exceptional sets (let $j_0=0$):
\begin{multline*}
E_i=\{\om\in\P_{j_{i+1}}(\a)|\Omega_i\ ;\ \nexists\;\ome\in\P_{j_i+k}(\a)|\Omega_i,\ \ta\le k\le j_{i+1}-j_i,\\ 
\text{s.t.}\ \ome\supset\om\ \text{and }|T_{\a}^{j_i+k}(\ome)|\ge\delta\}.
\end{multline*}
The following lemma gives an estimate on the size of these exceptional sets. It is proven in Section~\ref{ss.remainders}.

\begin{Lem}
\label{l.remainderintervals}
There is a number $n_{h,B}$ growing at most exponentially in $h$ such that
$$
|E_i|\le\frac{(\tilde{C}|B|)^h|\Omega_0|}{h},
$$
for all $0\le i\le h-1$ and $n\ge n_{h,B}$.
\end{Lem}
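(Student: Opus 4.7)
\emph{The plan is to} bound $|E_i|$ by a quantity that decays to zero in the window length $m:=j_{i+1}-j_i\ge\sqrt n$, and then to choose $n_{h,B}$ polynomially in $h$ (hence at most exponentially) so that this decay dominates the right-hand side $(\tilde C|B|)^h|\Omega_0|/h$. Because $m$ grows with $n$, the key is quantitative decay of the set of monotonicity intervals that fail to reach image size $\delta$ throughout a long iteration window.

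\emph{Set-up.} For $x\in\Omega_i$ and $k\ge 0$, let $\om_k(x)$ denote the element of $\P_{j_i+k}(\a)|\Omega_i$ containing $x$. For $\tau\le k\le m$, define
$$
G_k=\bigl\{x\in\Omega_i\ ;\ |T_\a^{j_i+k'}(\om_{k'}(x))|<\delta\text{ for all }\tau\le k'\le k\bigr\}.
$$
Unwinding the definition of $E_i$ from the excerpt, one checks that $\bigcup E_i=G_m$, so it suffices to bound $|G_m|$; since $\Omega_i\subset\Omega_0$, any bound involving $|\Omega_i|$ will transfer.

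\emph{Per-block contraction.} The heart of the argument is to establish an estimate
$$
|G_{k+Nt_0}|\le\rho\,|G_k|,\qquad\tau\le k\le m-Nt_0,
$$
for some fixed $N\ge 1$ and $\rho\in(0,1)$ depending only on $\lambda,\delta,C_1,C_3$. For $\om\in\P_{j_i+k}(\a)|\Omega_i$ contributing to $G_k$, the image $T_\a^{j_i+k}(\om)$ has length $<\delta$; since by~\eref{eq.kaap1} every element of $\P_{t_0}(\a)$ has length $\ge\delta$, $T_\a^{j_i+k}(\om)$ meets at most two such elements $D_1,D_2$, so $\om$ has at most two children in $\P_{j_i+k+t_0}(\a)$. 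A child $\om^l$ remaining in $G_{k+t_0}$ must, by the $\lambda^{t_0}$-expansion of $T_\a^{t_0}$, satisfy $|T_\a^{j_i+k}(\om)\cap D_l|<\delta/\lambda^{t_0}$. Applying the bounded distortion of Lemma~\ref{l.distortion} (with $a_1=a_2=\a$) to $T_\a^{j_i+k}$ on $\om$, the total Lebesgue measure of surviving children is at most
$$
\frac{2C_3\,\delta/\lambda^{t_0}}{|T_\a^{j_i+k}(\om)|}\cdot|\om|.
$$
I would then split the sum over contributing $\om$'s into those with \emph{large} image (where the above factor is small, yielding a genuine contraction after iterating $N$ blocks to absorb the distortion constant $C_3$) and those with \emph{small} image (whose aggregate Lebesgue mass can be estimated via the Perron--Frobenius bound~\eref{eq.perronfrobenius} exactly as in the derivation of~\eref{eq.anteil}).

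\emph{Conclusion and main obstacle.} Iterating the contraction gives $|E_i|=|G_m|\le C_\ast\rho^{(m-\tau)/(Nt_0)}|\Omega_0|\le C_\ast\rho^{\sqrt n/(2Nt_0)}|\Omega_0|$ for $n$ large. Choosing $n_{h,B}$ so that $\sqrt{n_{h,B}}\ge Nt_0\bigl(h\log(\tilde C|B|)^{-1}+\log(C_\ast h)\bigr)/\log\rho^{-1}$ yields the claimed inequality and keeps $n_{h,B}=O(h^2)$, which is certainly at most exponential in $h$. The principal technical obstacle is the uniform per-block contraction: elements with extremely small image do not individually shrink under a single $t_0$-block, so one must argue that their aggregate Lebesgue mass is itself small. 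This is precisely where the $\sqrt n$ gap between consecutive $j_i$'s becomes essential: it gives initially small-image elements enough iterations either to recover (grow back to size $\ge\delta$ and exit $G_k$) or to be squeezed down to a mass negligible on the scale $(\tilde C|B|)^h/h$, closing the induction.
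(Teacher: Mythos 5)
Your high-level intuition is right—the $\sqrt n$ window gives small-image cells time to either recover or be squeezed to negligible mass—but the mechanism you propose to make this precise, a per-block Lebesgue contraction $|G_{k+Nt_0}|\le\rho|G_k|$, is false and the patch you sketch does not repair it. If $\om\in\P_{j_i+k}(\a)$ has $|T_\a^{j_i+k}(\om)|\ll\delta/\lambda^{Nt_0}$, then the two surviving children after $Nt_0$ steps can carry essentially \emph{all} of $|\om|$: the factor $2C_3\delta\lambda^{-t_0}/|T_\a^{j_i+k}(\om)|$ you compute is then $\gg1$, and no fixed $N$ fixes this. You acknowledge this, and propose to control the \emph{aggregate} mass of small-image cells within a block by Perron--Frobenius. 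But at an intermediate time $j_i+k$ you only know each such image has length $<\delta$; the images cluster near the (order $n$) boundary points of $\O_{j_i+k}(\a)$, so Perron--Frobenius gives an aggregate bound $\approx C_1^2\delta\,\#\O_{j_i+k}(\a)= O(\delta n)$, which is \emph{not} small and certainly not decaying across blocks. The contraction scheme therefore never gets started, and the induction does not close.

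The paper's argument avoids block-by-block contraction entirely and works with the whole window at once. The key counting fact is the same one you isolate (only the two children adjacent to $\partial\hat\om$ can have image $<\delta$ after a $t_0$-step), but it is used \emph{combinatorially} rather than metrically: starting from $\le p_1^\tau$ cells at time $j_i+\tau$, surviving twice per $t_0$-block for $([\sqrt n]-\tau)/t_0$ blocks gives $\le 2p_1^\tau\,2^{([\sqrt n]-\tau)/t_0}\le 2p_1^\tau\,\sqrt\lambda^{[\sqrt n]}$ cells in $\P_{j_i+[\sqrt n]}(\a)$. Combined with the $\lambda^{[\sqrt n]}$ expansion over the window, the total length of $T_\a^{j_i}(E_{\om'})$ is at most $\gamma_n:=2p_1^\tau\lambda^{-[\sqrt n]/2}$, and crucially each such piece is an interval of length $\le\gamma_n$ abutting a boundary point $b_{\om'}\in\O_{j_i}(\a)$. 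Only \emph{then} is Perron--Frobenius applied (once), to the preimage of the $\gamma_n$-fattened set $\bigcup_{b\in\O_{j_i}(\a)}[b,b+\gamma_n]$, yielding $|E_i|\le C_3 C_1^2\gamma_n\,\#\O_{j_i}(\a)\le 2p_1 C_3 C_1^2\,n\gamma_n$. Since $n\gamma_n$ decays super-exponentially in $\sqrt n$ while the target $(\tilde C|B|)^h|\Omega_0|/h\ge(\tilde C|B|)^h/(nh)$ decays only exponentially in $h$, choosing $n_{h,B}$ with $\sqrt{n_{h,B}}$ growing linearly in $h$ suffices, and $n_{h,B}=O(h^2)$ meets the ``at most exponential'' requirement. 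In short: you need the image smallness $\gamma_n$ to be available \emph{before} applying Perron--Frobenius, which only happens at the end of the window, not per block.
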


Disregarding finitely many points, 
$\Omega_i\setminus E_i$, $0\le i\le h-1$, can be seen as a set of disjoint and open intervals $\ome$ such that each $\ome$ is an 
element of a partition $\P_{j_i+k}|\Omega_i$, $\ta\le k\le j_{i+1}-j_i$, and 
$|T_{\a}^{j_i+k}(\ome)|\ge\delta$. By ~\eref{eq.anteil}, we obtain 
$$
|\{x\in\ome\ ;\ T_{\a}^{j_{i+1}}(x)\in3B\}|\le \tilde{C}|B||\ome|,
$$
which in turn implies that, for $n\ge n_{h,B}$, 
\begin{align*}
|\Omega_{i+1}|
               \le \tilde{C}|B||\Omega_i\setminus E_i|+|E_i|
               \le \tilde{C}|B||\Omega_i|+\frac{(\tilde{C}|B|)^h|\Omega_0|}{h}.
\end{align*}
Hence, we have
\begin{align*}
|\Omega|\le|\Omega_h|\le (\tilde{C}|B|)^h|\Omega_0|+h\frac{(\tilde{C}|B|)^h|\Omega_0|}{h}\le2(\tilde{C}|B|)^h|\Omega_0|.
\end{align*}
Observe that $|\Omega_0|=|J_x|\le(3C'+\sup_{a\in\I}|X'(a)|)|J|$. Since $|\Omega_J|\le qC'|\Omega|$ 
we conclude that $|\Omega_J|\le(C|B|)^h|J|$ where
$C=2q\tilde{C}C'(3C'+\sup_{a\in\I}|X'(a)|)$. This implies \eref{eq.JJJ} which is the estimate 
we had to show.

\section{Switching from the parameter space to the phase space, and estimating the set of partition elements with too small images}
\label{s.keylemmas}
In this section we will prove the key lemmas, Lemma~\ref{l.distortion1} and Lemma~\ref{l.remainderintervals}, in the 
proof of Proposition~\ref{p.main}. As seen in Section~\ref{ss.proofpropmain}, Lemma~\ref{l.distortion1} makes it possible 
to compare partition elements on the parameter space to partition elements on the phase space, and 
Lemma~\ref{l.remainderintervals} provides us with a good estimate of exceptional partition elements in the phase 
space with too small images.
We establish first a distortion lemma. 

\begin{Lem}
\label{l.distortion}
If the one-parameter family $T_a$, $a\in\I$, satisfies condition~(III), then there exists a constant $C_3\ge1$ 
such that we have the following distortion estimate.  Let $n\ge1$ and $a_1,a_2\in\I$ such that 
$a_1\le a_2$ and $a_2-a_1\le1/n$. For $\om\in\P_j(a_1)$, $1\le j\le n$, we have 
$$
\frac{1}{C_3}\le\left|\frac{T_{a_1}^j\,'(x)}{T_{a_2}^j\,'(x')}\right|\le C_3,
$$
for all $x\in\om$ and $x'\in\S_{a_1,a_2,j}(\om)$.
\end{Lem}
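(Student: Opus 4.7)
The plan is to reduce Lemma~\ref{l.distortion} to a classical telescoping distortion estimate, augmented by condition~(III) to handle the change of parameter from $a_1$ to $a_2$. Write $\om'=\S_{a_1,a_2,j}(\om)$, $y_i=T_{a_1}^i(x)$, $y_i'=T_{a_2}^i(x')$. Since $T_{a_1}^{j-i}$ is smooth on $T_{a_1}^i(\om)$ with $|T_{a_1}^{j-i}\,'|\ge\la^{j-i}$ and the final image lies in $[0,1]$, one obtains $|T_{a_1}^i(\om)|\le\la^{-(j-i)}$, and similarly $|T_{a_2}^i(\om')|\le\la^{-(j-i)}$.

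Taking logs of the chain rule yields
\begin{equation*}
\log\frac{|T_{a_1}^j\,'(x)|}{|T_{a_2}^j\,'(x')|}=\sum_{i=0}^{j-1}\bigl(\log|T_{a_1}\,'(y_i)|-\log|T_{a_2}\,'(y_i')|\bigr).
\end{equation*}
By the symbolic-dynamics clause~\eref{eq.drei1}, $y_i$ and $y_i'$ lie in branches of $T_{a_1}$ and $T_{a_2}$ respectively with the same index $k$; after extending each branch of $T_a\,'$ to a Lipschitz function on $[0,1]$ by McShane extension and invoking $C^{1,1}(L)$ together with property~(ii), each summand is bounded by $\tfrac{L}{\la}(|y_i-y_i'|+|a_1-a_2|)$. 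The parameter contribution totals at most $\tfrac{L}{\la}\,j|a_1-a_2|\le\tfrac{L}{\la}$, since $j\le n$ and $|a_1-a_2|\le 1/n$, so the remaining task is to bound $\sum_{i=0}^{j-1}|y_i-y_i'|$ uniformly in~$j$.

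For this I would combine the size bounds $|T_{a_1}^i(\om)|,|T_{a_2}^i(\om')|\le\la^{-(j-i)}$ with the distance bound $\dist(T_{a_1}^j(\om),T_{a_2}^j(\om'))\le C_2|a_1-a_2|$ from~\eref{eq.dreidist}, pulling the latter back to time $i$ through the smooth inverse branches of $T_{a_1}^{j-i}$ and $T_{a_2}^{j-i}$. Uniform expansion~\eref{eq.la} contracts distances by a factor $\la^{-(j-i)}$, so the time-$j$ alignment gap of $C_2|a_1-a_2|$ becomes an $O(\la^{-(j-i)}|a_1-a_2|)$ gap at time $i$, and together with the image sizes this gives $|y_i-y_i'|\le\const(\la^{-(j-i)}+|a_1-a_2|)$. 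The geometric series in $j-i$ then sums to a bounded constant, and exponentiation gives both inequalities of Lemma~\ref{l.distortion} with $C_3$ depending only on $L,\la,C_2$. The hardest part is this pull-back step: the inverse branches $(T_{a_1}^{j-i})^{-1}$ and $(T_{a_2}^{j-i})^{-1}$ are defined on slightly different intervals $T_{a_1}^j(\om)$ and $T_{a_2}^j(\om')$, so one must carefully track how the distance between these two intervals is transferred back under the two different maps while preserving the contraction factor $\la^{-(j-i)}$; the $C^{1,1}(L)$ regularity together with property~(ii) is what makes this comparison possible.
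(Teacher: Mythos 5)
Your overall strategy mirrors the paper's: use the symbolic-matching clause of condition~(III) to line up corresponding branches $B_{k_i}(a_1)$ and $B_{k_i}(a_2)$, then compare $T_{a_1}'$ and $T_{a_2}'$ term by term along the orbit, with the two ingredients being the image size bounds $|T_{a}^i(\cdot)|\le\la^{-(j-i)}$ and a pull-back of the alignment bound~\eref{eq.dreidist} from time~$j$ to time~$i$. Packaging this as a telescoping log sum rather than the paper's direct product (with the auxiliary preimages $r_i,s_i$) is a cosmetic difference. However, the step you yourself flag as the hardest one is where the gap is: you claim that pulling the time-$j$ alignment gap of $C_2|a_1-a_2|$ back through the inverse branches of $T_{a_1}^{j-i}$ and $T_{a_2}^{j-i}$ ``preserves the contraction factor $\la^{-(j-i)}$''. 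That is not true when the two maps differ. Each single inverse step introduces a fresh error of order $|a_1-a_2|$ (one factor from $a\mapsto T_a(x)$ being $\Lip(L)$, one from the Lipschitz drift of the branch boundaries), and these errors do not decay under further pull-back: one gets a recursion of the form $d_i\le\la^{-1}d_{i-1}+\text{const}\cdot|a_1-a_2|$, which saturates at $d_i=O(|a_1-a_2|/(\la-1))$ rather than $O(\la^{-(j-i)}|a_1-a_2|)$. The paper's Claim and its inductive bound~\eref{eq.distance1} deliver exactly this saturated $O(1/n)$ estimate, and that is what is needed.

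By a fortunate coincidence your final stated bound $|y_i-y_i'|\le\const(\la^{-(j-i)}+|a_1-a_2|)$ already carries a non-decaying $+|a_1-a_2|$ term, so the sum $\sum_{i<j}|y_i-y_i'|\le\frac{\const}{\la-1}+\const\,j|a_1-a_2|$ is still bounded (using $j\le n$, $|a_1-a_2|\le1/n$), and the conclusion is salvageable. But the mechanism you describe does not produce this bound; you need to run the recursive pull-back argument explicitly, and you also need to handle the short-time regime where $j$ is bounded and the overlap of the branches $B_{k_i}(a_1)\cap B_{k_i}(a_2)$ could be empty (the paper fixes $\tau$ with $2L/\tau<\delta_0$ and absorbs $j\le\tau$ into the constant $(\Lambda/\la)^\tau$). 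Your McShane extension is a reasonable substitute for the paper's auxiliary point $\tilde r_i$ in the overlap, but both devices still require that overlap to be controlled.
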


\begin{Rem}
If $a_1=a_2$ in Lemma~\ref{l.distortion}, then we get a standard distortion estimate for piecewise expanding 
$C^{1,1}(L)$ maps.
\end{Rem}

\begin{proof}
Fix $\tau\ge1$ such that $2L/\tau<\delta_0$. 
Taking constant $C_3$ in Lemma~\ref{l.distortion} greater than $(\Lambda/\la)^\tau$, for $n\le\tau$, 
the distortion estimate is trivially satisfied and we can assume that 
$\tau\le j\le n$. Observe that, by condition (III), there exist points $r_0\in T_{a_1}^j(\om)$ 
and $s_0\in T_{a_2}^j(\S_{a_1,a_2,j}(\om))$ such that $|r_0-s_0|\le 2C_2/n$.
For $1\le i\le j$, let
$$
r_i\in T_{a_1}^{j-i}(\om),\quad\text{and}\quad s_i\in T_{a_2}^{j-i}(\S_{a_1,a_2,j}(\om)),
$$
be the pre-images of $r_0$ and $s_0$, i.e., $T_{a_1}^i(r_i)=r_0$ and $T_{a_2}^i(s_i)=s_0$.  Note that, by (III), we have 
$\ind_{a_1}(r_i)=\ind_{a_2}(s_i)$. Let $k_i=\ind_{a_1}(r_i)$, and denote by $B_{k_i}(a)$ the (maximal) monotonicity 
interval $(b_{l-1}(a),b_l(a))$, $1\le l\le p_0$, for $T_a:[0,1]\to[0,1]$ which contains the domain $D_{k_i}(a)$. 
Recall that, by property~(i), $|B_{k_i}(a)|\ge\delta_0$. 

\begin{Claim}
The distance between $r_i$ and $s_i$, $1\le i\le j$, satisfies
\begin{equation}
\label{eq.distance}
|r_i-s_i|\le\frac{5L\Lambda+2C_2}{\la-1}\frac{1}{n}.
\end{equation}
\end{Claim}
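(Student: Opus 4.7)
The plan is to prove the claim by induction on $i$, starting from the given bound $|r_0 - s_0| \le 2C_2/n$ and establishing a one-step contraction of the form
\begin{equation*}
\la|r_i - s_i| \le |r_{i-1} - s_{i-1}| + \frac{E}{n},
\end{equation*}
for a constant $E$ of order $L\La$. Iterating this and summing the geometric series gives
\begin{equation*}
|r_i - s_i| \le \frac{|r_0 - s_0|}{\la^i} + \frac{E}{n}\sum_{k=1}^{i}\la^{-k} \le \frac{2C_2}{n} + \frac{E}{n(\la-1)},
\end{equation*}
so the whole task reduces to carrying out the inductive step with $E$ chosen small enough that the right-hand side fits inside $(5L\La + 2C_2)/(n(\la-1))$.

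For the inductive step I would use the identities $r_{i-1} = T_{a_1}(r_i)$ and $s_{i-1} = T_{a_2}(s_i)$, together with the symbolic matching of condition~(III), which places $r_i \in B_{k_i}(a_1)$ and $s_i \in B_{k_i}(a_2)$. In the generic subcase where both $r_i, s_i$ lie in the overlap $B_{k_i}(a_1) \cap B_{k_i}(a_2)$, a single triangle inequality yields
\begin{equation*}
|r_{i-1} - s_{i-1}| \ge |T_{a_1}(r_i) - T_{a_1}(s_i)| - |T_{a_1}(s_i) - T_{a_2}(s_i)| \ge \la|r_i - s_i| - L/n,
\end{equation*}
where the first estimate uses the uniform lower bound $\la$ on $|T_{a_1}'|$ inside a single monotonicity interval and the second is property~(ii). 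In the boundary-crossing subcase where, say, $s_i \notin B_{k_i}(a_1)$, I would project $s_i$ to the nearest point $s_i^* \in \overline{B_{k_i}(a_1)}$; since property~(i) ensures the partition points $b_k(a)$ are $\Lip(L)$, one has $|s_i - s_i^*| \le L|a_1 - a_2| \le L/n$. Applying the same triangle inequality to $r_i$ and $s_i^*$ in place of $s_i$ and accounting for $|T_{a_1}(s_i^*) - T_{a_1}(s_i)| \le \La \cdot L/n$ (on the relevant branch) together with the $L/n$-term from property~(ii) accumulates the necessary $L\La$-multiples in the error, which after symmetrically handling the case $r_i \notin B_{k_i}(a_2)$ can be absorbed into $E$ as in the target bound.

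The main obstacle is precisely this boundary-crossing subcase: when $s_i$ passes through some partition point as the parameter moves from $a_2$ to $a_1$, the map $a \mapsto T_a(s_i)$ is discontinuous and property~(ii) cannot be invoked directly at $s_i$. What saves the argument is the preparatory choice $2L/\tau < \delta_0$ and the reduction to $\tau \le j \le n$: these force $2L/n < \delta_0$, so the symmetric difference of $B_{k_i}(a_1)$ and $B_{k_i}(a_2)$ is strictly smaller than their common minimal length $\delta_0$. In particular the projection $s_i^*$ is well-defined, lies within $L/n$ of $s_i$, and sits in the same expansion regime as $r_i$. Tracking the constants carefully (the $\la$ and $\La$ factors arising from one application of $T_{a_1}$, together with the two $L/n$-errors) delivers the recurrence with $E \le 5L\La$, which closes the induction and produces the stated bound.
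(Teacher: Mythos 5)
Your overall strategy --- induction on $i$, a one-step contraction combined with a Lipschitz-in-parameter error, and then a geometric series --- is the same as the paper's. But the execution of the inductive step has a genuine gap, in both of your subcases, and it is exactly the issue you yourself flag as ``the main obstacle'' that is not in fact resolved. To invoke property~(ii) and conclude $|T_{a_1}(x)-T_{a_2}(x)|\le L/n$ you need $b_k(a)\neq x$ for \emph{every} $a\in[a_1,a_2]$, not merely at the two endpoints. In your ``generic'' subcase you only know $s_i\in B_{k_i}(a_1)\cap B_{k_i}(a_2)$; since the $b_k$'s need not be monotone in $a$, a partition point can still cross $s_i$ at some intermediate parameter, and then property~(ii) gives nothing at $s_i$. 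In your ``boundary-crossing'' subcase the situation is worse: projecting $s_i$ to the nearest point $s_i^*\in\overline{B_{k_i}(a_1)}$ places $s_i^*$ exactly at a partition point $b_k(a_1)$, where property~(ii) is unavailable on any parameter interval containing $a_1$, so the term $|T_{a_1}(s_i^*)-T_{a_2}(s_i^*)|$ cannot be estimated this way.

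The paper closes precisely this gap with one small extra device. Using property~(i) and $2L/n<\delta_0$, it shrinks $B_{k_i}(a_1)$ by $L/n$ at each end to get a \emph{nonempty} interval $B$ with $B\subset B_{k_i}(a)$ for every $a\in[a_1,a_2]$, then picks an auxiliary point $\tilde r_i\in B$ with $|r_i-\tilde r_i|\le 2L/n$ and routes the whole estimate through $\tilde r_i$: the contraction is taken under $T_{a_2}$ inside $B_{k_i}(a_2)$ (which contains both $\tilde r_i$ and $s_i$), the Lipschitz-in-$a$ bound is applied at $\tilde r_i$ (legitimate precisely because $\tilde r_i$ stays inside $B_{k_i}(a)$ for \emph{all} $a$), and replacing $r_i$ by $\tilde r_i$ under one application of $T_{a_1}$ costs only an error of order $L\Lambda/n$. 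That single auxiliary point, chosen to be stable under the whole parameter segment, is what your argument is missing; without it neither of your two subcases actually yields the $L/n$ bound from property~(ii). A minor further point: in your geometric series you bound $\la^{-i}|r_0-s_0|$ by $2C_2/n$, but the stated bound has $2C_2/(n(\la-1))$, which for $\la>2$ is smaller; since the Claim is for $i\ge1$ you should instead use $\la^{-i}\le\la^{-1}\le(\la-1)^{-1}$.
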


\begin{proof}
In order to show ~\eref{eq.distance}, we will show inductively that
\begin{equation}
\label{eq.distance1}
|r_i-s_i|\le\frac{L(1+2\la+2\Lambda)}{n}\sum_{l=1}^{i}\frac{1}{\la^l}+\frac{2C_2}{n\la^i},
\end{equation}
for $0\le i\le j$. This is obviously true for $i=0$, so assume that \eref{eq.distance1} holds for $i-1$ where  $1\le i\le j$. 
Recall that $a_2-a_1\le1/n$. Hence, by property~(i), there is an interval $B\subset B_{k_i}(a_1)\cap B_{k_i}(a_2)$ such that 
$|B_{k_i}(a_1)\setminus B|\le2L/n$ and $B\subset B_{k_i}(a)$, for all $a\in[a_1,a_2]$. 
(Since $|B_{k_i}(a_1)|\ge\delta_0$ and $2L/n<\delta_0$ the interval $B$ is non-empty.)
Take a point $\tilde{r}_i\in B$ such that $|r_i-\tilde{r}_i|\le2L/n$. Since $|\tilde{r}_i-s_i|\le\la^{-1}|T_{a_2}(\tilde{r}_i)-T_{a_2}(s_i)|$, 
we obtain
$$
|r_i-s_i|\le\frac{2L}n+\frac1\la|T_{a_2}(\tilde{r}_i)-T_{a_1}(\tilde{r}_i)|+\frac1\la|T_{a_1}(\tilde{r}_i)-s_{i-1}|.
$$
By property (ii) the map $a\mapsto T_a(\tilde{r}_i)$ is $\Lip(L)$ on $[a_1,a_2]$ which implies 
$|T_{a_2}(\tilde{r}_i)-T_{a_1}(\tilde{r}_i)|\le L/n$. Further, we have 
$$
|T_{a_1}(\tilde{r}_i)-s_{i-1}|\le|T_{a_1}(\tilde{r}_i)-r_{i-1}|+|r_{i-1}-s_{i-1}|\le\frac{2L\Lambda}n+|r_{i-1}-s_{i-1}|.
$$
Altogether, we obtain
$$
|r_i-s_i|\le\frac{|r_{i-1}-s_{i-1}|}\la+\frac{L(1+2\la+2\Lambda)}{\la n},
$$
and we easily deduce that \eref{eq.distance1} holds for $i$.
\end{proof}

For $1\le i\le j$, let $B\subset B_{k_i}(a_1)\cap B_{k_i}(a_2)$ and $\tilde{r}_i\in B$ as in the proof of \eref{eq.distance}. 
We obtain
\begin{align*}
|T_{a_1}'(r_i)-T_{a_2}'(s_i)|&\le|T_{a_1}'(r_i)-T_{a_1}'(\tilde{r}_i)|+|T_{a_1}'(\tilde{r}_i)-T_{a_2}'(\tilde{r}_i)|+|T_{a_2}'(\tilde{r}_i)-T_{a_2}'(s_i)|\\
&\le L|r_i-\tilde{r}_i|+L|a_1-a_2|+L|\tilde{r}_i-s_i|\le\frac{4L^2}n+\frac{L}n+L|r_i-s_i|,
\end{align*}
where we used property~(ii) for estimating the term $|T_{a_1}'(\tilde{r}_i)-T_{a_2}'(\tilde{r}_i)|$.
Using \eref{eq.distance}, we get that $|T_{a_1}'(r_i)-T_{a_2}'(s_i)|\le C'n^{-1}$ where 
$C'=L(4L+1+(5L\Lambda+2C_2)(\la-1)^{-1})$.
Altogether, we obtain
\begin{align}
\label{eq.distortion11}
\nonumber
\left|\frac{T_{a_1}^j\,'(x)}{T_{a_2}^j\,'(x')}\right|&\le
\prod_{i=1}^j\frac{|T_{a_1}'(T_{a_1}^{j-i}(x))|}{|T_{a_2}'(T_{a_2}^{j-i}(x'))|}
\le\prod_{i=1}^j
\frac{|T_{a_1}'(r_i)|+L|T_{a_1}^{j-i}(\om)|}{\max\{|T_{a_2}'(s_i)|-L|T_{a_2}^{j-i}(\S_{a_1,a_2,j}(\om))|,\la\}}\\
&\le\prod_{i=1}^j
\frac{|T_{a_2}'(s_i)|+C'n^{-1}+L\la^{-i}}
{\max\{|T_{a_2}'(s_i)|-L\la^{-i},\la\}}.
\end{align}
Since $j\le n$, the product in the last term of inequality 
\eref{eq.distortion11} is clearly bounded above by a constant independent on $n$. Hence, 
this shows the upper bound in the distortion estimate. The lower bound is shown in 
the same way. 
\end{proof}

\subsection{Proof of Lemma~\ref{l.distortion1}}
\label{ss.distortion1}
We define the map
$$
\S_J:\P_n|J\to\P_n(\a)
$$
as follows. Let $\om\in\P_n|J$ and $a\in\om$. By the definition of the partitions associated to 
the parameter interval, we have $x_j(a)\notin\{b_0(a),...,b_{p_0}(a)\}$, for all $0\le j<n$ (recall that $X(a)=x_0(a)$).  
Hence, there exists an element 
$\om(X(a))$ in the partition $\P_n(a)$ containing the point $X(a)$. We set
$$
\S_J(\om)=\S_{a,\a,n}(\om(X(a))),
$$
where $\S_{a,\a,n}:\P_n(a)\to\P_n(\a)$ is the map given by (III). 
Note that the element $\om'=\S_J(\om(X(a)))$ has the same combinatorics as $\om$, i.e., 
$$
\ind_{\a}(T_{\a}^j(\om'))=\ind(x_j(\om)),
$$ 
$0\le j<n$. 
Since there cannot be two elements in $\P_n(\a)$ with the same combinatorics, the element $\om'$ is independent on the 
choice of $a\in\om$. It follows that the map $\S_J$ is well-defined. 

From the first claim in the proof of Lemma~\ref{l.app1}, we get that the boundary points of $T_a^n(\om(X(a)))$ change continuously in 
$a\in\om$. Hence, since $x_n(a)$ is contained in $T_a^n(\om(X(a)))$ and since $x_n'(a)\neq0$, for all $a\in\om$, we get 
\begin{multline*}
|x_n(\om)|\le\lim_{a\to\om_L}\lim_{a'\to\om_R}\Big(|T_a^n(\om(X(a)))|+\dist(T_a^n(\om(X(a))),T_{a'}^n(\om(X(a'))))\\
+|T_{a'}^n(\om(X(a')))|\Big),
\end{multline*}
where $\om_L$ and $\om_R$ denote the left and right endpoint of $\om$, respectively.
By \eref{eq.drei2}, we get $|T_a^n(\om(X(a)))|\le C_2|T_{\a}^n(\S_J(\om))|$, for all $a\in\om$, and, by \eref{eq.dreidist}, 
we obtain 
$$
\lim_{a\to\om_L}\lim_{a'\to\om_R}\dist(T_a^n(\om(X(a))),T_{a'}^n(\om(X(a'))))\le C_2|\om|.
$$
From condition~(I), it follows that $|\om|\le C_0\la^{-n}|x_n(\om)|$. Thus, we deduce that
$$
|x_n(\om)|\le\frac{2C_2}{1-C_0C_2\la^{-n}}|T_{\a}^n(\S_J(\om))|,
$$
where in the last inequality we used once more \eref{eq.drei2}. By condition~(I) and Lemma~\ref{l.distortion}, we obtain that 
$|T_{\a}^n\,'(x)|\le C_0C_3|x_n'(a)|$, for all $a\in\om$ and $x\in\S_J(\om)$. We conclude that
$$
|\om|\le\frac{C_0C_2C_3(1+C_2)}{1-C_0C_2\la^{-n}}|\S_J(\om)|,
$$
which implies the estimate \eref{eq.sun} in Lemma~\ref{l.distortion1}.

In order to prove \eref{eq.sun2}, observe that $|a_J-a|\le|J|\le1/n$, for all $a\in\om$. Hence, by \eref{eq.distance}, we have
$$
\dist(T_a^j(\om(X(a))),T_{\a}^j(\S_J(\om)))\le\frac Cn,\quad\text{for all }0\le j\le n,
$$
where $C$ is the constant in the righthand side of \eref{eq.distance}. For $n$ sufficiently large 
we have $|T_a^j(\om(X(a)))|\le\la^{-(n-j)}\le Cn^{-1}$, if $j\le n-\sqrt{n}$. Thus, since $x_j(a)$ is contained in $T_a^j(\om(X(a)))$, we conclude
$$
\dist(x_j(\om),T_{\a}^j(\S_J(\om)))\le\frac{2C}n,\quad\text{for all }0\le j\le n-\sqrt{n}.
$$

In order to conclude the proof of 
Lemma~\ref{l.distortion1}, it is only left to show that the map $\S_J$ is at most $q$-to-one for some integer $q\ge1$. 
Let $l_0=l_0(C_0,\la)\ge0$ be so large that $|x_j'(a)|\ge L$ for all $j\ge l_0$ and parameter values $a\in\I$ for which 
the derivative is defined ($L$ is the 
Lipschitz constant introduced in Section~\ref{ss.prel}). If $\ome\in\P_{l_0}|J$, using that the partition points 
$b_0(a),...,b_{p_0}(a)$ are $\Lip(L)$ on $\I$, 
it is easy to show that the map $\S_J|_{\ome}:\P_n|\ome\to\P_n(\a)$ is one-to-one. Hence, setting $q=\#\{\om\in\P_{l_0}|\I\}$ 
we derive that the map $\S_J:\P_n|J\to\P_n(\a)$ is at most $q$-to-one.

\subsection{Proof of Lemma~\ref{l.remainderintervals}}
\label{ss.remainders}
Let $j\ge1$ and $a\in\I$. For each $\om'\in\P_j(a)$, we define the set
\begin{multline*}
E_{\om'}=\{\om\in\P_{j+[\sqrt{n}]}(a)|\om'\ ;\ \nexists\;\tilde{\om}\in\P_{j+k}(a)|\om',\\ 
   \tau\le k\le[\sqrt{n}],\ \text{s.t.}\ \tilde{\om}\supset\om\ \text{and}\ |T_a^{j+k}(\tilde{\om})|\ge\delta\}.
\end{multline*}
Observe that the choice of $\delta$ in the beginning of Section~\ref{ss.proofpropmain} 
implies that if $\om\in\P_t(a)$, $1\le t\le t_0$, then $|T_a^t(\om)|\ge\delta$.
From this we deduce that if $\hat{\om}\in\P_l(a)$, $l\ge1$, and $1\le t\le t_0$ then we have
$$
\#\{\om\in\P_{l+t}(a)|\hat{\om}\ ;\ |T_a^{l+t}(\om)|<\delta\}\le2.
$$
In other words only the elements in $\P_{l+t}(a)|\hat{\om}$ that are adjacent to a boundary point of $\hat{\om}$ can have a 
small image. By a repeated use of this fact and using that $\#\{\om\in\P_{j+\tau}(a)|\om'\}\le p_1^\tau$ 
(recall that $p_1$ is the number of elements in $\P_1(a)$ and, by property~(iii), $p_1$ does not depend on $a$), we derive 
$$
\#\{\om\in\P_{j+[\sqrt{n}]}(a)|E_{\om'}\}\le p_1^\tau\cdot2\cdot2^{([\sqrt{n}]-\tau)/t_0}\le2p_1^\tau\sqrt{\la}^{[\sqrt{n}]},
$$
where in the last inequality we used the definition of $t_0$. It follows that
\begin{equation}
\label{eq.gagaga}
|T_a^j(E_{\om'})|\le \frac{\#\{\om\in\P_{j+[\sqrt{n}]}(a)|E_{\om'}\}}{\la^{[\sqrt{n}]}}
               \le\frac{2p_1^\tau}{\sqrt{\la}^{[\sqrt{n}]}}=:\gamma_n.
\end{equation}
Observe that if we choose $j=j_i$, $0\le i\le h-1$, and $a=\a$ then the exceptional set $E_i$ in Lemma~\ref{l.remainderintervals} is contained in
$$
E=\bigcup_{\om'\in\P_j(a)}E_{\om'}.
$$
We have
$$
|E|=\sum_{\om'\in\P_j(a)}\int_{T_a^j(E_{\om'})}\frac{1}{|T_a^j\,'(x_y)|}dy,
$$
where $x_y=(T_a^j|_{\om'})^{-1}(y)$. Set
$$
\O_j(a)=\{b\ ;\ b\in\partial T_a^i(\om),\ 1\le i\le j,\ \om\in\P_j(a)\},
$$
and, for $\om\in\P_j(a)$, let $\Gamma(\om)=[b_{\om},b_{\om}+\gamma_n]\cap T_a^j(\om)$, where 
$b_{\om}\in\O_j(a)$ denotes the left boundary point of $T_a^j(\om)$. By \eref{eq.gagaga} and 
the distortion estimate in Lemma~\ref{l.distortion} (where $a_1=a_2=a$), we get
$$
|E|\le C_3\sum_{\om'\in\P_j(a)}\int_{\Gamma(\om')}\frac{1}{|T_a^j\,'(x_y)|}dy,
$$
Now, summing over all points in $\O_j(a)$ and moving the sum over the partition elements inside the integral, we derive that
$$
|E|\le C_3\sum_{b\in\O_j(a)}\int_{[b,b+\gamma_n]}\sum_{\begin{subarray}{c}
           x\in K(a) \\
           T_a^j(x)=y
           \end{subarray}}
           \frac{1}{|T_a^j\,'(x)|}dy\le C_3C_1^2\gamma_n\#\O_j(a),
$$
where in the last inequality we used \eref{eq.perronfrobenius}.
Observe that for each $b\in\O_j(a)$ there is a monotonicity 
domain $D\in\P_1(a)$ for $T_a|_{K(a)}$ and a partition point $c\in\partial D$ such that 
$$
b=\lim_{\begin{subarray}{c} x\to c\\ x\in D\end{subarray}}T_a^i(x),
$$
for some $1\le i\le j$. Thus, since $j\le n$, we have $\#\O_j(a)\le\#\O_n(a)\le n\cdot2p_1$. 
Recall that $\tilde{C}=3C_1^2C_3\delta^{-1}$ and $|\Omega_0|\ge n^{-1}$. 
Finally, in the case when $j=j_i$, $0\le i\le h-1$, and $a=\a$ we deduce that
$$
|E_i|\le|E|\le 2p_1C_3C_1^2n\gamma_n\le \frac{(\tilde{C}|B|)^h|\Omega_0|}{h},
$$
for $n\ge n_{h,B}$, where $n_{h,B}$ can obviously be chosen to grow only exponentially in $h$. 
This concludes the proof of Lemma~\ref{l.remainderintervals}.

\section{$\beta$-transformation}
\label{s.beta}
We apply Theorem~\ref{t.main} to a $C^{1,1}(L)$-version of
$\beta$-transformations. Let the map $T:[0,\infty)\to[0,1]$ be piecewise $C^{1,1}(L)$ 
and $0=b_0<b_1<...$ be the associated partition, where $b_k\to\infty$ as $k\to\infty$. We assume that:
\begin{itemize}
\item[a)]
$T$ is right continuous and $T(b_k)=0$, for each $k\ge0$.
\item[b)]
For each $a>1$,
$$
1<\inf_{x\in[0,1]}\partial_xT(a x)\quad\text{and}\quad\sup_{x\in[0,1]}\partial_xT(ax)<\infty.
$$
\end{itemize}
See Figure~\ref{f.beta}. 
We define the one-parameter family $T_a:[0,1]\to[0,1]$, $a>1$, by 
$T_a(x)=T(ax)$. 
There exists a unique a.c.i.p. $\mu_a$ for each $T_a$ as the following lemma asserts.

\begin{Lem}
\label{l.uniqueb}
For each $a>1$ there exists a unique a.c.i.p. $\mu_a$ for $T_a$. The support $K(a)$ is an interval adjacent to $0$ 
and the map $a\mapsto|K(a)|$, $a>1$, is piecewise constant where the set of discontinuity points is countable and nowhere dense. 
\end{Lem}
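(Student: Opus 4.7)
The plan is to construct an invariant interval $[0,\rho(a)]$ which will serve as the support $K(a)$, deduce uniqueness of $\mu_a$ from indecomposability of the dynamics on this interval, and finally show that $\rho(a)$ takes values in the countable set $\{T(b_k^-):k\ge 1\}$, from which the final statement follows.

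First I would set $\rho_n(a)=\sup T_a^n([0,1])$. Every branch of $T_a$ has the form $[b_{k-1}/a,b_k/a)$ with image the interval $[0,T(b_k^-))$ adjacent to $0$ (by right-continuity and $T(b_k)=0$), so one proves inductively that $T_a^n([0,1])=[0,\rho_n(a)]$. The nesting $T_a^{n+1}([0,1])\subseteq T_a^n([0,1])$ makes $\rho_n(a)$ non-increasing, and I define $\rho(a):=\lim_n\rho_n(a)$. Then $T_a([0,\rho(a)])=[0,\rho(a)]$ and $\bigcap_n T_a^n([0,1])=[0,\rho(a)]$, from which any $T_a$-invariant Borel set of positive Lebesgue measure lies in this intersection; in particular the support $K(a)$ of every a.c.i.p.\ is contained in $[0,\rho(a)]$.

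To upgrade this to $K(a)=[0,\rho(a)]$ with uniqueness, I would argue that $T_a$ restricted to $[0,\rho(a)]$ is dynamically indecomposable. Given an open $J\subseteq[0,\rho(a)]$, expansion $|T_a'|\ge\lambda>1$ from condition~(b) forces $J$ eventually to straddle a discontinuity $b_k/a$; the cut piece immediately to the right of $b_k/a$ produces, under one more iterate, an interval $[0,\eta]$ adjacent to $0$ (by right-continuity and $T_a(b_k/a)=0$). The monotone map $g(x):=\sup T_a([0,x])$ then satisfies $g(x)>x$ on $(0,\rho(a))$, since any putative fixed point $x^*\in(0,\rho(a))$ would give a proper invariant sub-interval $[0,x^*]$, but condition~(b) together with the fine branch structure (each full branch of $T$ accessible from $[0,x^*]$ has image of height $T(b_k^-)$) forces some $T(b_k^-)>x^*$ to be reached by the $T_a$-orbit of $[0,x^*]$, contradicting invariance. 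Hence iterates of $[0,\eta]$ grow monotonically to $[0,\rho(a)]$ and $T_a^n(J)$ becomes dense there; combined with Wong~\cite{wong} (finitely many ergodic a.c.i.p.'s, each supported on a union of intervals), this yields a unique $\mu_a$ with $K(a)=[0,\rho(a)]$.

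The final assertion hinges on the claim $\rho(a)=T(b_{k^*(a)}^-)$ for some $k^*(a)\ge 1$. From the recursion
\[
\rho_{n+1}(a)=\max\bigl(M_n(a),\,T(a\rho_n(a))\bigr),\qquad M_n(a)=\max\{T(b_k^-):b_k\le a\rho_n(a)\},
\]
suppose for contradiction that $\rho(a)=T(a\rho(a))>M_\infty(a)$. Then for all large $n$, $\rho_{n+1}(a)=T(a\rho_n(a))$, and condition~(b) applied to the branch of $T$ containing $a\rho(a)$ together with the mean value theorem gives
\[
|\rho_{n+1}(a)-\rho(a)|=|T(a\rho_n(a))-T(a\rho(a))|\ge\lambda\,|\rho_n(a)-\rho(a)|,
\]
contradicting $\rho_n(a)\to\rho(a)$. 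Thus $\rho(a)\in\{T(b_k^-):k\ge 1\}$. On any bounded parameter interval only finitely many such values occur (since $b_k\to\infty$), and for each value $v=T(b_{k^*}^-)$ the set $\{a:\rho(a)=v\}$ is cut out by finitely many strict inequalities $b_{k^*}\le av$ and $T(aw)\le v$ along the stabilized finite orbit of $v$; these persist under small perturbation of $a$ except at a locally finite set of saturation parameters. Hence the discontinuity set of $\rho$ is locally finite, countable, and nowhere dense.

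The main obstacle I anticipate is the indecomposability step, specifically ruling out non-trivial fixed points of $g$ in $(0,\rho(a))$: this is precisely what identifies $K(a)$ with $[0,\rho(a)]$ rather than with some smaller invariant sub-interval, and it relies delicately on condition~(b) holding uniformly for all $a>1$, which bounds the branch slopes from below in a way compatible with the branch partition. A secondary technical point is tracking the combinatorics of successive iterates of $J$ through discontinuities, to ensure that the cut-and-reset process systematically produces intervals adjacent to $0$ that then grow back to exhaust $[0,\rho(a)]$.
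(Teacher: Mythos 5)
Your construction of the invariant interval $[0,\rho(a)]$ via the decreasing nest $T_a^n([0,1])$ and your uniqueness argument (any invariant interval must eventually straddle a discontinuity and hence contain a neighborhood of $0$) are sound and follow essentially the same lines as the paper, which instead builds $K(a)$ as the closure of $\bigcup_j T_a^j(L)$ for a small interval $L$ adjacent to $0$. Both routes are fine.

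The gap is in your treatment of the discontinuity set. You claim that $\rho(a)\in\{T(b_k^-):k\ge1\}$ for \emph{every} $a$, and your purported contradiction is that the expansion estimate $|\rho_{n+1}(a)-\rho(a)|\ge\lambda\,|\rho_n(a)-\rho(a)|$ is incompatible with $\rho_n(a)\to\rho(a)$. But this estimate is \emph{not} a contradiction when $\rho_n(a)=\rho(a)$ for all large $n$ (it reduces to $0\ge 0$), and the nest can stabilize in finitely many steps. Your argument only rules out the case $\rho_n(a)>\rho(a)$ for all $n$; the finite-time stabilization case is not addressed, and it is exactly where the subtlety lives. (One can in fact patch this: if $\rho_N(a)=\rho(a)<\rho_{N-1}(a)$, then either $\rho(a)=M_{N-1}(a)$, so $\rho(a)\in\{T(b_k^-)\}$ anyway, or $\rho(a)=T(a\rho_{N-1}(a))=T(a\rho(a))$ with $a\rho_{N-1}(a),a\rho(a)$ in different branches, which forces $M_{N-1}(a)>\rho(a)$ and a contradiction. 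But this case analysis is missing from your write-up.) Beyond that, the passage from $\rho(a)\in\{T(b_k^-)\}$ to ``countable nowhere-dense discontinuity set'' is hand-waved (``saturation parameters'' is not an argument); the key input you do not use is that $\rho(a)$ is \emph{non-decreasing in $a$}, which follows from $T_a(L)\subset T_{a'}(L)$ for $a'>a$ and intervals $L$ adjacent to $0$. The paper instead argues directly: either $b(a)=T(b_k^-)$ with $T_a(b(a)^-)<b(a)$, which persists on a one-sided neighbourhood, or $b(a)$ is a fixed point of $T_a$; since fixed points are strictly decreasing in $a$ while $b(a)$ is non-decreasing, the latter occurs for a countable nowhere-dense set of parameters. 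You should either adopt that dichotomy or repair the finite-time case and explicitly invoke monotonicity of $\rho$.
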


The proof of Lemma~\ref{l.uniqueb} is given in Section~\ref{ss.eins1}. 
Henceforth, $\I\subsetneq(1,\infty)$ will always denote a closed interval 
on which $|K(a)|$ is constant as well as the number of discontinuities of $T_a$ inside $K(a)$ and $[0,1]$, 
i.e., the numbers $\#\{k\ge0\ ;\ b_k/a\in\inte(K(a))\}$ and $\#\{k\ge0\ ;\ b_k/a<1\}$ are constant on $\I$. For such an interval $\I$ it is now 
straightforward to check that the one-parameter family $T_a$, $a\in\I$, fits into the model described 
in Section~\ref{ss.prel} fulfilling properties (i)-(iii).
Now, we can state the main result of this section.

\begin{Thm}
\label{t.beta}
If for a $C^1$ map $X:\I\to[0,1]$ condition (I) is satisfied, then $X(a)$
is typical for $\mu_a$ for a.e. $a\in\I$.
\end{Thm}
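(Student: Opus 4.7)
My plan is to apply Theorem~\ref{t.main}: condition~(I) is hypothesized, and properties~(i)-(iii) hold on $\I$ by our choice of $\I$, so it suffices to verify conditions~(II) and~(III).

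For condition~(II), the uniform-in-$a$ upper bound on $\varphi_a$ follows from the Lasota-Yorke-type inequality in \cite{wong}: its constants depend only on $L$, $\la$, $\La$, and $p_0$, all of which are common to every $a\in\I$. For the uniform lower bound on $K(a)$, I use that $K(a)=[0,\kappa(a)]$ is an interval adjacent to $0$ (Lemma~\ref{l.uniqueb}) and that the combinatorial structure is constant on $\I$, which implies that $T_a$ is topologically mixing on $K(a)$ with a covering time uniformly bounded on $\I$; a standard iterate-and-Perron-Frobenius argument, combined with the just-established uniform upper bound, then furnishes $\varphi_a\ge 1/C_1$ on $K(a)$ with $C_1$ independent of $a\in\I$.

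For condition~(III), given $a_1<a_2$ in $\I$ and $j\ge1$, I define $\S_{a_1,a_2,j}(\om)$ to be the unique element of $\P_j(a_2)$ sharing the itinerary $(k_0,\ldots,k_{j-1})$ of $\om\in\P_j(a_1)$. Existence holds because the branches of $T_a|_{K(a)}$ deform $\Lip$-continuously in $a$ without merging or disappearing on $\I$, so every $T_{a_1}$-itinerary is realized at $a_2$; uniqueness is immediate from combinatorics. Identity \eref{eq.drei1} holds by construction. For \eref{eq.dreidist}, I would track the two boundary orbits of $\om$ and $\S(\om)$ through the shared itinerary: each boundary point of $\om_l$ is a preimage of a common cut $b_{k^\pm}/a_l$ at a common step $i^\pm<j$, and after step $i^\pm+1$ the corresponding boundary of $T_{a_l}^i(\om_l)$ is $T_{a_l}^{i-i^\pm-1}(y^\pm)$, where $y^\pm\in\{0,\lim_{x\to b_{k^\pm}^-}T(x)\}$ is independent of $a_l$. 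An induction identical in spirit to \eref{eq.distance1} from the proof of Lemma~\ref{l.distortion} then bounds the parameter discrepancy through $j$ iterations by a geometric series in $\la^{-1}$, yielding \eref{eq.dreidist} with a $j$-uniform constant. The size estimate \eref{eq.drei2} follows from the same tracking together with the distortion bound of Lemma~\ref{l.distortion}.

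The main obstacle is the $j$-uniform distance estimate \eref{eq.dreidist}: naive propagation of an $a$-perturbation through $j$ iterations would produce blow-up of order $\la^j|a_1-a_2|$. The rescuing mechanism is the shared combinatorics, which forces the two orbits into the same branch at each step, so the per-step discrepancy comes only from the Lipschitz dependence of $T_a(x)=T(ax)$ in $a$; at each cut the boundary is \emph{reset} to an $a$-independent value ($0$ or $\lim_{x\to b_k^-}T(x)$), and the accumulated errors form a convergent geometric series in $\la^{-1}$ exactly as in the proof of Lemma~\ref{l.distortion}.
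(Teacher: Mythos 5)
Your overall plan — invoke Theorem~\ref{t.main} after checking conditions~(II) and (III) — is the right one and matches the paper's. But there are two problems, one minor and one serious.

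For condition~(II), the claim that Wong's Lasota--Yorke bound gives an $a$--uniform upper bound for $\varphi_a$ with constants depending "only on $L$, $\la$, $\La$, $p_0$" is not correct: the variation bound on $\varphi_a$ also depends on $\delta(a)=\min\{|\om|:\om\in\P_\tau(a)\}$, the minimal length of monotonicity intervals of $T_a^\tau$, which is \emph{not} a priori uniformly bounded away from $0$ over $\I$ (the orbits of the cut points $b_k/a$ can pass arbitrarily close to other cut points as $a$ moves). This is precisely what the paper's Lemma~\ref{l.app1} is for: it covers $\I$, after discarding finitely many parameters, by subintervals on which $\delta(a)\ge\delta_0>0$, and only then is the uniform density bound available. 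Your lower-bound sketch is correct in spirit but is not actually carried out; the paper constructs a concrete $a$--uniform covering time by first showing $T_a^l(J(a))\supset[0,\vep)$, then exploiting exactness at the \emph{left} endpoint $a'$ of $\I$ together with the monotone inclusion $T_{a'}^{l'}([0,\vep))\subset T_a^{l'}([0,\vep))$ for $a\ge a'$.

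The genuine gap is in condition~(III). Your tracking-through-shared-combinatorics argument would indeed yield the distance estimate \eref{eq.dreidist} (it is the same induction as in the Claim inside Lemma~\ref{l.distortion}), but it does \emph{not} yield the size estimate \eref{eq.drei2}, and the appeal to "the same tracking together with the distortion bound" does not close that hole: the boundaries of $T_{a_1}^j(\om)$ and $T_{a_2}^j(\S(\om))$ could be within $C_2|a_1-a_2|$ of each other with $T_{a_2}^j(\S(\om))$ much shorter than $T_{a_1}^j(\om)$, in which case \eref{eq.dreidist} holds and \eref{eq.drei2} fails; distortion control of derivatives cannot prevent this because it says nothing about which of $|\om|,|\S(\om)|$ is larger. (There is also a circularity you'd need to resolve: Lemma~\ref{l.distortion} is proved under condition~(III), and can only be invoked at this stage once \eref{eq.dreidist} is independently established.) The missing idea — which your "reset to an $a$-independent value" observation almost reaches but does not supply — is an \emph{ordering}. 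In the $\beta$-setting every image $T_a^i(\om)$, $\om\in\P_i(a)$, is an interval adjacent to $0$, and the cut points $b_k/a$ are strictly decreasing in $a$; an induction over $j$ then gives the actual \emph{containment} $T_{a_1}^j(\om)\subset T_{a_2}^j(\S_{a_1,a_2,j}(\om))$ for $a_1\le a_2$. That is what the paper proves, and it makes both \eref{eq.dreidist} (distance $0$) and \eref{eq.drei2} (constant $1$) immediate, with no distortion bound and no tracking.
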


\begin{Rem}
As the family $T_a$ we could also consider other models as, e.g., $x\mapsto ag(x)\mod1$ where $g:[0,1]\to[0,1]$ is a 
$C^{1,1}(L)$ homeomorphism with a strict positive derivative. Even if this model is not included in the families 
described above, it would be easier to treat since, seen as a map from the circle into itself, it is 
non-continuous only in the point $0$ which, in particular, implies that $K(a)=[0,1]$.
\end{Rem}

By Theorem~\ref{t.main}, in order to proof Theorem~\ref{t.beta}, 
it is sufficient to check conditions (II) and (III). We will show that there is a large class of 
maps $Y$ for which we have almost sure typicality:

\begin{Cor}
\label{c.beta}
If $Y:(1,\infty)\to(0,1]$ is $C^1$ such that $Y'(a)\ge0$, then $Y(a)$
is typical for $\mu_a$ for a.e. $a>1$.
\end{Cor}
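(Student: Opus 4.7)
\medskip

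\noindent\textbf{Proof plan.}
The plan is to reduce Corollary~\ref{c.beta} to Theorem~\ref{t.beta} via Lemma~\ref{l.startcalc}. By Lemma~\ref{l.uniqueb}, the set of $a>1$ at which $|K(a)|$ or the number of discontinuities jumps is countable and nowhere dense, so its complement is a countable union of open intervals, each of which we further exhaust by closed subintervals $\I$ on which properties (i)--(iii) and conditions (II) and (III) have already been secured in Section~\ref{s.beta}. Fix one such $\I$. If we produce $j_0\ge0$ for which $X(a):=T_a^{j_0}(Y(a))$ satisfies the hypotheses of Lemma~\ref{l.startcalc} with $Y$ as the initial map, then condition~(I) holds for $X$, Theorem~\ref{t.beta} yields typicality of $X(a)$ for a.e.\ $a\in\I$, and since the orbit $\{T_a^i(X(a))\}_{i\ge0}$ is the $j_0$-th tail of $\{T_a^i(Y(a))\}_{i\ge0}$, typicality of $Y(a)$ follows. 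A countable union of full-measure sets exhausts $(1,\infty)$.

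The key computation exploits the factorization $T_a(x)=T(ax)$, which gives $\partial_a T_a(x)=xT'(ax)=(x/a)\,\partial_x T_a(x)\ge0$. Setting $y_j(a)=T_a^j(Y(a))$, at any $a$ where the iteration is smooth one has the recursion
$$
y_j'(a)=T_a'(y_{j-1}(a))\,y_{j-1}'(a)+y_{j-1}(a)\,T'(a\,y_{j-1}(a)).
$$
Since $Y'(a)\ge0$, $Y(a)>0$ and both terms on the right are non-negative, induction gives $y_j'(a)\ge0$ for all $j$. Expanding out and keeping only the $i=0$ contribution, we obtain
$$
y_j'(a)\ge T_a^{j-1}{}'(y_1(a))\,Y(a)\,T'(a\,Y(a))\ge\la^{j-1}\cdot\inf_{a\in\I}Y(a)\cdot\inf_{a\in\I}T'(a\,Y(a)),
$$
where the last two infima are strictly positive by compactness of $\I$, continuity of $Y$, and assumption~(b) on $T$ (which forces $T'$ to be bounded away from $0$ on $[0,\sup\I]$). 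Consequently $\inf_{a\in\I}|y_j'(a)|$ grows at least like $\la^{j-1}$, and for $j_0$ large enough the lower bound~\eref{eq.startcomp} of Lemma~\ref{l.startcalc} is satisfied.

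It remains to verify the two remaining hypotheses of Lemma~\ref{l.startcalc}: that $y_{j_0}$ is piecewise $C^1$ with only finitely many non-differentiability points, and that $y_{j_0}(a)\in K(a)$ for every $a\in\I$. The first is straightforward by induction: given that $y_j$ is piecewise $C^1$ with uniformly large derivative on $\I$ and that the discontinuity points $b_k/a$ are $\Lip(L)$ in $a$, only finitely many parameters can map under $y_{j+1}$ onto a discontinuity. The second, that the orbit of $Y(a)$ lands in $K(a)$ in a uniform time, is the main obstacle. Here we use that on $\I$ the interval $K(a)$ is forward-invariant and absorbing for $T_a$, together with the uniform piecewise-expanding structure, to produce a uniform entry time $j^*$ with $T_a^{j^*}([0,1])\subset K(a)$ for all $a\in\I$; this follows by a compactness-continuity argument, exploiting that $|K(a)|$ is constant on $\I$ and that the dynamics depends continuously on $a$ away from the discontinuity set excluded at the start. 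Taking $j_0$ larger than $j^*$ if necessary completes the verification and hence the proof.
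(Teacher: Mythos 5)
Your overall plan -- reducing Corollary~\ref{c.beta} to Theorem~\ref{t.beta} by checking the hypotheses of Lemma~\ref{l.startcalc}, using Lemma~\ref{l.uniqueb} to get a countable cover of $(1,\infty)$ by closed parameter intervals $\I$ on which (i)--(iii), (II), (III) hold, and verifying condition~(I) through the sign structure $Y'\ge0$ and $\partial_aT_a(x)=xT'(ax)\ge0$ in the recursion \eref{eq.startcalc1} -- is exactly the paper's route, and your derivative estimate $y_j'(a)\gtrsim\la^{j-1}$ (extracting the $i=1$ term of the sum) is the same as \eref{eq.betastartcalc}. The finiteness of the non-differentiability set of $y_j$ via the monotonicity of $a\mapsto b_k/a$ also matches the paper.

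The genuine gap is in your last step, the claim that there is a \emph{uniform} entry time $j^*$ with $T_a^{j^*}([0,1])\subset K(a)$ for all $a\in\I$, which you propose to establish by a compactness--continuity argument. This claim is not true in general. All branch images $T_a(B_k(a))$ are intervals adjacent to $0$, so $T_a^j([0,1])$ is a nested decreasing sequence of intervals $[0,c^{(j)})$, but this sequence need not reach $K(a)=[0,b(a)]$ in finitely many steps (nor even in the limit): if some branch has a nearly full image, or if there is a fixed/periodic point or Cantor-type repeller for $T_a$ in the gap $(b(a),1]$, then $c^{(j)}$ stays bounded away from $b(a)$. Forward invariance plus uniform expansion gives absorption only for Lebesgue-a.e.\ $x$ (this is precisely the content of \eref{eq.propertyK}), not for every $x$. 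Continuity in $a$ cannot repair this, since the obstruction is already present for a single fixed $a$.

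The paper closes this gap differently, and your proof needs this substitute. Rather than a uniform entry time on the phase space, one uses that $y_j'(a)$ grows like $\la^{j}$ (already established) together with property~\eref{eq.propertyK}: since $y_j$ is monotone with rapidly growing derivative, after possibly discarding a countable set of parameters one can refine the cover to closed subintervals $\tilde\I\subset\I$ on each of which there is a (possibly $\tilde\I$-dependent) integer $j\ge j_0$ such that $y_j$ is $C^1$ on $\tilde\I$, satisfies \eref{eq.startcomp}, and $y_j(a)\in K(a)$ for all $a\in\tilde\I$; then Lemma~\ref{l.startcalc} applies with $X=y_j$ on $\tilde\I$. In short: replace ``uniform entry from the whole phase interval'' with ``a.e.\ entry, exploited parameter-locally after a further countable subdivision.'' The rest of your argument then goes through unchanged.
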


\begin{Rem}
Observe that the map
$$
Y(a)\equiv\lim_{x\to b_k-}T(x),
$$
$a>1$, satisfies $Y(a)>0$ and $Y'(a)\ge0$, and, hence, Corollary~\ref{c.beta} can be applied
to these values which are important from a dynamical point of view. 
\end{Rem}

\subsection{Proof of Corollary~\ref{c.beta} and Lemma~\ref{l.uniqueb}}
\label{ss.eins1}
We prove first Lemma~\ref{l.uniqueb}.
For $a>1$ let $\mu_a$ be an a.c.i.p. for $T_a$ with support $K(a)$ and let $J\subset K(a)$ be an interval. 
Since $T_a$ is expanding there exists an integer $j\ge1$ such that $T_a^j:J\to[0,1]$ is not any longer continuous. 
It follows that $T_a^j(J)$ contains a neighborhood of $0$. If $T_a$ had more than one 
a.c.i.p. then, by \cite{wong}, there would exist two a.c.i.p.'s with disjoint supports (disregarding a finite number of 
points). This shows that the a.c.i.p. $\mu_a$ is unique and its support $K(a)$ contains an interval adjacent to $0$. 
Since the image by $T_a$ of an interval adjacent to $0$ is again an interval adjacent to $0$, using the ergodicity of 
$\mu_a$, one deduces that $K(a)$ is a single interval and $K(a)=\closure\{\cup_{j\ge0}T_a^j(L)\}$ where $L$ is 
a sufficiently small interval adjacent to $0$. Observe that for an arbitrary interval $L\subset[0,1]$ adjacent to $0$ 
one has $T_a(L)\subset T_{a'}(L)$ for all $a'>a$, which implies that the right end point $b(a)$ of $K(a)$ is non-decreasing. 
Now, we easily derive that there is $k\ge1$ such that $b_k/a<b(a)$ and 
$T_a(b_k/a-)=b(a)$. Since $T_a(b_k/a-)=T(b_k-)$ is constant in $a$, it follows that if $T_a(b(a)-)<b(a)$ then 
the support of the a.c.i.p. for $T_{a'}$ is equal to $K(a)$ for all $a'\in[a,\tilde{a}]$ where $a<\tilde{a}\le\infty$ 
is maximal such that $T_{a'}(b(a)-)\le b(a)$ for all $a'\in[a,\tilde{a}]$.
In the case when $T_a(b(a)-)=b(a)$ and $b(a)=b_k/a$, for some $k\ge1$, then since $T_{a'}(b(a))<b(a)$ for $a'>a$ sufficiently close to $a$, we 
derive that the support of the a.c.i.p. $T_{a'}$ is constantly equal to $K(a)$ for all $a'>a$ close to $a$. Thus, the only possible obstacle left is the set 
$\{a>1\ ;\ b(a)\ \text{is a fixed point for $T_a$}\}$. But using that the fixed points for $T_a$ are strictly decreasing in $a$ 
and $b(a)$ is non-decreasing in $a$,
we deduce that this set is countable and nowhere dense. This concludes the proof of Lemma~\ref{l.uniqueb}.

We proceed with the proof of Corollary~\ref{c.beta} which is an application of Lemma~\ref{l.startcalc}.
Disregarding countably many points we can cover $(1,\infty)$ by intervals $\I$ as described in the beginning of Section~\ref{s.beta}. 
Thus, in order to prove Corollary~\ref{c.beta}, it is sufficient to
verify the requirements of Lemma~\ref{l.startcalc} for the family $T_a$ together with the map $Y$ restricted to such a parameter interval $\I$. 
Recall that $\la>1$ stands for a uniform lower bound for the expansion in the family.
For $x\in[0,1]$, observe that by the definition of $T_a$, we have $\partial_aT_a(x)=T'(ax)x\ge0$, for all $a\in\I$ such that $x\neq b_k/a$, $k\ge0$.
Recall the formula \eref{eq.startcalc1} for the derivative of $y_j(a)=T_a^j(Y(a))$, $j\ge1$ (where we set $k=0$). 
Since $Y'(a)\ge0$ and $Y(a)>0$, for all $a\in\I$, since the points of discontinuity $b_k/a$ for the map $T_a$ are strictly decreasing, 
and since all the terms on the right-hand side of \eref{eq.startcalc1} are non-negative, we derive inductively that, 
for each $j\ge1$, the maps $y_j$ and $T_a^j$ are differentiable in $a$ and $Y(a)$, 
respectively, for all but finitely many $a\in\I$. Furthermore, since $\I$ is closed we have that $Y(a)$ is uniformly bounded away from $0$ and, 
from the term in the sum in \eref{eq.startcalc1} when $i=0$, we obtain
\begin{equation}
\label{eq.betastartcalc}
y_j'(a)\ge T_a^{j-1}\,'(Y(a))T'(Y(a))\inf_{a\in\I}Y(a)\ge\const\la^{j-1}.
\end{equation}
Thus, we find $j_0\ge0$ such that \eref{eq.startcomp} is satisfied. The only obstacle in applying Lemma~\ref{l.startcalc} 
might be that $y_{j_0}(a)\not\in K(a)$. 
However, by \eref{eq.betastartcalc} and property~\eref{eq.propertyK}, we derive that, 
by possibly disregarding a countable number of points, we can cover $\I$ by a 
countable number of intervals $\tilde{\I}\subset\I$ such that for each such interval $\tilde{\I}$ there is an integer $j\ge j_0$ such that 
$y_j|_{\tilde{\I}}$ is $C^1$ satisfying \eref{eq.startcomp} and $y_j(a)\in K(a)$ for all $a\in\tilde{\I}$. 
By Lemma~\ref{l.startcalc}, it follows that condition~(I) is satisfied for the map $X(a)=y_j(a)$, $a\in\tilde{\I}$. By 
Theorem~\ref{t.beta} this concludes the proof of Corollary~\ref{c.beta}.

\subsection{Condition (II)}
\label{ss.zweib}
The verification of condition (III) in the Section~\ref{ss.betacdrei} does not make use of condition~(II). Hence, by Lemma~\ref{l.app1}, 
we can without loss of generality assume that there is a constant $C=C(\I)\ge1$ such that 
for each $a\in\I$ the density $\varphi_a$ is bounded from above by $C$ and, further, there exists an interval 
$J(a)$ of length $C^{-1}$ such that $\varphi_a$ restricted 
to $J(a)$ is bounded from below by $C^{-1}$ (otherwise, disregarding a finite number of points, by 
Lemma~\ref{l.app1}, we can cover the interval $\I$ by a countable number of closed subintervals on each of which 
this is true and then proceed with these subintervals instead of $\I$). To conclude the verification of 
condition~(II) it is left to show that there exists a lower bound for $\varphi_a$ on the whole of $K(a)$.

To make the definition of the intervals $J_i(a)$ below work, we assume that the interval $J(a)$ is closed to the left and open to the right. 
Recall that, by property~(i) in Section~\ref{ss.prel}, we have $b_k/a-b_{k-1}/a\ge\delta_0$, $1\le k\le p_0$, for some
constant $\delta_0=\delta_0(\I)>0$. Let $\vep=\min\{(\la-1)/2C,\la\delta_0\}$ and
take $l\ge1$ so large that $\la^l/2C>1$. We claim that
$[0,\vep)\subset T_a^l(J(a))$. Let $J_0(a)=J(a)$ and assume that we
have defined the interval $J_{i-1}(a)\subset J(a)$, $i\ge1$, 
where $J_{i-1}(a)$ is a (not necessarily maximal) interval of monotonicity for $T_a^{i-1}$. 
If $[0,\vep)\subset T_a^i(J_{i-1}(a))$, we stop and do not define $J_i(a)$. If $[0,\vep)$ is not contained in 
$T_a^i(J_{i-1}(a))$ then, since $J_{i-1}(a)$ is a monotonicity interval for $T_a^{i-1}$ and by the 
definition of $\vep$, it follows that there can lie at most 
one partition point $b_k/a$ in the image $T_a^{i-1}(J_{i-1}(a))$. 
If there is no partition point in this image then we let 
$J_i(a)=J_{i-1}(a)$, which is in this case also a monotonicity interval for $T_a^i$. If there is a partition point 
$b_k/a\in T_a^{i-1}(J_{i-1}(a))$, then we define $J_i(a)\subset J_{i-1}(a)$ to be the 
interval of monotonicity for $T_a^i$ such that $T_a^{i-1}(J_i(a))=T_a^{i-1}(J_{i-1}(a))\cap[0,b_k/a)$. 
Note that $|T_a^{i-1}(J_{i-1}(a))\cap[b_k/a,1]|<\vep/\la$, since otherwise we would have 
$[0,\vep)\subset T_a^i(J_{i-1}(a))$. Assuming that $J_l(a)$ is defined, we obtain
\begin{align*}
|T_a^l(J_l(a))|&\ge\la(|T_a^{l-1}(J_{l-1}(a))|-\vep/\la)\ge\la^l|J_0(a)|-\vep\frac{\la^l-1}{\la-1}\\
&\ge\la^l(1/C-1/2C)\ge\la^l/2C>1,
\end{align*}
where we used the definitions of $\vep$ and $l$. Since $T_a^l(J_l(a))\subset[0,1]$, 
this is a contradiction and it follows that the maximal integer $i\ge0$ such that $J_i(a)$ is defined 
is strictly smaller than $l$. Hence, $T_a^l(J(a))$ contains $[0,\vep)$ as claimed above. 

Let $a'\in\I$ be the left boundary point of $\I$ and denote by $b$ the right boundary point of $K(a')$ (which is 
constant in $a\in\I$). As observed in the proof of Lemma~\ref{l.uniqueb} there exists a partition point 
$b_k/a'$, $k\ge1$, such that $b_k/a'<b$ and $T_{a'}(b_k/a'-)=b$.
Since $T_{a'}$ is exact (see \cite{wagner}) we derive that there exists an integer $l'<\infty$ 
such that $b_k/a'\in T_{a'}^{l'}([0,\vep))$. Since $T_{a'}^{l'}([0,\vep))\subset T_a^{l'}([0,\vep))$ for all $a>a'$, it 
follows that $T_a^{l'+1}([0,\vep))=[0,b)$ for all $a\in\I$.
Altogether, we derive that for $j\ge l+l'+1$ (i.e., $j$ is independent on $a\in\I$) we have $K(a)=\closure\{T_a^j(J(a))\}$
for all $a\in\I$.
Now, by the Perron-Frobenius equality, it follows that
\begin{equation}
\label{eq.perron22}
\varphi_a(y)\ge\sum_{\begin{subarray}{c}
              x\in J(a)\\
              T_a^j(x)=y
              \end{subarray}}\frac{\varphi_a(x)}{|T_a^j\,'(x)|}
\ge\frac{1}{C\Lambda^j},
\end{equation}
for a.e. $y\in K(a)$, $a\in\I$ (recall that $\Lambda$ is an upper bound for the maximal expansion in the family). 
This concludes the proof of a lower bound for $\varphi_a$ on the whole of $K(a)$.

\subsection{Condition (III)}
\label{ss.betacdrei}
We verify condition (III) by induction over $j\ge1$. Let $a_1,a_2\in\I$ such that 
$a_1\le a_2$. Note that $\P_1(a)$ consists of the elements $(b_{k-1}(a),b_k(a))$, $1\le k<p_0$ and the 
element $(b_{p_0-1}(a),b)$ where $b$ is the right boundary point of $K(a)$ (which does not depend on $a\in\I$). 
Thus, if $1\le k<p_0$ then we clearly have $T_{a_1}((b_{k-1}/a_1,b_k/a_1))=T_{a_2}((b_{k-1}/a_2,b_k/a_2))$. 
Since $b_{p_0-1}/a<b\le b_{p_0}/a$, for all $a\in\I$, we derive that
$T_{a_1}((b_{p_0-1}/a_1,b))\subset T_{a_2}((b_{p_0-1}/a_2,b))$.  Hence, (III) holds for $j=1$. Assume that (III) holds 
for $j\ge1$. Let $\ome\in\P_j(a_1)$ and $\ome'=\S_{a_1,a_2,j}(\ome)$ the corresponding element in $\P_j(a_2)$. 
Note that the image by $T_a^i$, $i\ge1$, of an element in $\P_i(a)$ is always adjacent to $0$.
Since $T_{a_1}^j(\ome)\subset T_{a_2}^j(\ome')$ and the partition points $b_k/a$'s are decreasing, 
it follows immediately that for every element $\om\in\P_{j+1}(a_1)|\ome$ there is 
a unique element $\om'\in\P_{j+1}(a_2)|\ome'$ fulfilling
$\ind_{a_1}(T_{a_1}^i(\om))=\ind_{a_2}(T_{a_2}^i(\om'))$, $0\le i<j+1$, and 
$T_{a_1}^{j+1}(\om)\subset T_{a_2}^{j+1}(\om')$.
Defining $\S_{a_1,a_2,j+1}(\om)=\om'$ shows that (III) holds also for $j+1$.

\section{Piecewise expanding unimodal maps}
\label{s.unimodal}
Let $T$ be a piecewise expanding unimodal map as defined below and $T_a$, $a\in\i$, $\delta>0$, a one-parameter family of 
piecewise expanding unimodal maps through $T$, i.e., $T_0=T$. In this section, we will show that if the family is 
non-degenerate (or transversal) then there exists $0<\vep\le\delta$ such that 
for Lebesgue almost every parameter values $a\in\ii$ the turning point of $T_a$ is typical for the a.c.i.p. for $T_a$. 
We will also state a condition for other points than the turning point in order to get almost sure typicality.

Henceforth, let $K>0$ be a fixed real number. We call a map $T:[-K,1]\to\real$ a {\em piecewise expanding unimodal\/} map 
if it satisfies the following properties: 
\begin{itemize}
\item $T\in C^0([-K,1])$ and $T$ is $C^{1,1}(L)$ on the intervals $[-K,0]$ and $[0,1]$.
\item $\inf_{x\in[-K,0]}T'(x)>1\quad\text{and}\quad\sup_{x\in[0,1]}T'(x)<-1$.
\item $T(0)=1$,\quad $T(1)>-K\quad\text{and}\quad T^2(1)\ge T(1)$.
\end{itemize}
Observe that the interval $[T(1),1]$ is forward invariant for $T$. 
Since $T$ is $C^{1,1}(L)$ on $[-K,1]$ except at the point $0$, 
by \cite{wong}, there exists a unique a.c.i.p. $\mu$ for $T$. 
We denote the set of piecewise expanding unimodal maps by $\U$. 
We say that a map $T\in\U$ is {\em mixing} if it is topologically mixing on $[T(1),1]$. 
Observe that if $T$ is mixing then the support of $\mu$ is the whole interval $[T(1),1]$. 
In this section we will consider only maps $T\in\U$ which are mixing. 
A {\em one-parameter family through $T\in\U$} is a family $T_a\in\U$, $a\in\i$, $\delta>0$, satisfying:
\begin{itemize}
\item $T_0=T$. 
\item For all $x\in[-K,1]$ the map $a\mapsto T_a(x)$ is $C^1(\i)$. 
\item The maps $a\mapsto T_a(x)$ and $a\mapsto\partial_xT_a(x)$ are $\Lip(L)$ on $\i$.
\end{itemize}

\begin{Rem}
Let $T_a\in\U$, $a\in\i$, be a one-parameter family of mixing piecewise expanding maps 
through a map $T\in\U$ and $\psi_a$ 
the affine map from $[T_a(1),1]$ onto $[0,1]$ with, say, positive derivative. 
Since $T_a(1)$ is Lipschitz in $a$ and since the length of the invariant interval $[T_a(1),1]$ is 
bounded from below by $\la>1$ (recall that $\la$ is a lower bound for the expansion in the family; cf. \eref{eq.la}) 
and from above by $1+K$, it is obvious that the family 
$\psi_a\circ T_a\circ\psi_a^{-1}:[0,1]\to[0,1]$, $a\in\i$, satisfies properties~(i)-(iii) in 
Section~\ref{ss.prel}. Henceforth, we will consider the family $T_a:[T_a(1),1]\to[T_a(1),1]$, $a\in\i$, and not the family 
defined on the unit interval which is affinely conjugated to it. The partitions in Section~\ref{ss.partitions} 
are defined in an analogous way for the family $T_a$.
\end{Rem}

To each one-parameter family $T_a\in\U$, $a\in\i$, through a map $T\in\U$ we associate a number $\Lambda_0\ge0$ 
given by
$$
\Lambda_0=\frac{\sup_{x\in[T(1),1]}\left|\partial_aT_a(x)|_{a=0}\right|}{\la-1}.
$$
This number serves as a threshold in order to get almost sure typicality. It stays in direct correspondence to the threshold given 
in Lemma~\ref{l.startcalc} in order to verify condition~(I). The main result of this section is the following.

\begin{Thm}
\label{t.main1}
Let $T_a\in\U$, $a\in\i$, $\delta>0$, be a one-parameter family of mixing unimodal maps
through a map $T\in\U$. If there exists $j_0\ge3$ such that $|D_aT_a^{j_0}(0)|_{a=0}|>\Lambda_0$, 
then there exists $0<\vep\le\delta$ such that $0$ is typical for $\mu_a$ for almost every $a\in\ii$. 
\end{Thm}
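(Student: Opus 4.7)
The plan is to apply Theorem~\ref{t.main} to the family $T_a:[T_a(1),1]\to[T_a(1),1]$, $a\in\ii$, for a suitably small $\vep\le\delta$ with the candidate map $X(a)=T_a^{j_1}(0)$ for some $j_1\ge j_0$ to be chosen. Since each $T_a$ is mixing we have $K(a)=[T_a(1),1]$, which varies Lipschitz continuously in $a$, so after the affine conjugation onto $[0,1]$ recalled in the remark preceding the theorem, properties~(i)--(iii) of Section~\ref{ss.prel} hold automatically. The proof then reduces to verifying conditions~(I)--(III) on $\ii$.

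For condition~(I) I would invoke Lemma~\ref{l.startcalc} with the constant map $Y(a)\equiv 0$. By hypothesis $|y_{j_0}'(0)|>\Lambda_0$, and in a neighborhood of $a=0$ the first $j_0-1$ iterates of the turning point under $T_0$ avoid $0$ itself, so the map $a\mapsto y_{j_0}'(a)$ is continuous near $0$; consequently $|y_{j_0}'(a)|>\Lambda_0$ for $a$ in some small $\ii=[0,\vep]$. To upgrade this to the slightly stronger inequality $|y_{j_1}'(a)|\ge M/(\la-1)+2L$ of \eref{eq.startcomp}, where $M:=\sup_{a\in\ii}\sup_{x\in K(a)}|\partial_aT_a(x)|$, I iterate once further. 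Applying \eref{eq.startcalc1} with $k=j_0$ and the uniform expansion bound gives
\[
|y_{j_1}'(a)|\ \ge\ \la^{j_1-j_0}\Bigl(|y_{j_0}'(a)|-\frac{M}{\la-1}\Bigr),
\]
which, after possibly shrinking $\vep$ so that $M/(\la-1)$ is close enough to $\Lambda_0$, can be made to exceed $M/(\la-1)+2L$ by choosing $j_1$ large. Since $y_{j_1}(a)\in K(a)$ is automatic for $j_1\ge 2$, Lemma~\ref{l.startcalc} delivers condition~(I) for $X(a)=y_{j_1}(a)$.

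Condition~(II) demands uniform upper and lower bounds on the invariant densities $\varphi_a$. The upper bound follows from the standard Lasota--Yorke estimate for piecewise expanding $C^{1,1}(L)$ maps, uniform in $a\in\ii$ because $\la$, $\La$, $L$, and the combinatorial data are uniform. For the lower bound I would mimic Section~\ref{ss.zweib}: mixing plus uniform expansion force any sufficiently short subinterval of $[T_a(1),1]$ to be iterated onto the whole invariant interval in a bounded number of steps, uniformly in $a\in\ii$, after which the Perron--Frobenius equality yields a uniform positive lower bound. Condition~(III) is the technical crux of the unimodal setting: for $a_1\le a_2$ in $\ii$, define $\S_{a_1,a_2,j}(\om)$ by matching $\om\in\P_j(a_1)$ with the unique element of $\P_j(a_2)$ sharing its symbol sequence; \eref{eq.drei1} is then automatic, while \eref{eq.dreidist} and \eref{eq.drei2} follow from Lipschitz control of $a\mapsto T_a(x)$ together with a telescoping estimate modelled on Lemma~\ref{l.distortion}. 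The order structure of a unimodal map with a single turning point, combined with the non-degeneracy encoded by $|D_aT_a^{j_0}(0)|_{a=0}|>\Lambda_0$, ensures that the matching is well-defined.

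With (I)--(III) in hand, Theorem~\ref{t.main} gives that $X(a)=T_a^{j_1}(0)$ is typical for $\mu_a$ for a.e. $a\in\ii$. Removing the first $j_1$ terms from a Birkhoff average does not change its weak-$*$ limit, so the turning point $0$ itself is typical for a.e. $a\in\ii$, which is the desired conclusion. I expect the main obstacle to be condition~(III): constructing the combinatorial matching $\S_{a_1,a_2,j}$ and controlling the quantitative closeness of the images uniformly in $j$ is where transversality really earns its keep, and this step should absorb the bulk of the technical work in the section.
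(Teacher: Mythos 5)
Your overall plan coincides with the paper's: conjugate to the unit interval using the remark before the theorem, feed $X(a)=T_a^{j_0}(0)$ (or a later iterate) into Theorem~\ref{t.main}, verify (I)--(III), and then observe that dropping finitely many initial Birkhoff terms is harmless. Two points need attention, one minor and one serious.

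The minor one concerns condition~(I). You assert that near $a=0$ the first $j_0-1$ iterates of the turning point avoid $0$. That fails exactly when $0$ is periodic for $T$ with prime period $p<j_0$, which the hypothesis $j_0\ge3$ is tailor-made to permit. The paper handles that case separately in Section~\ref{ss.unieins}: using \eref{eq.deg1} it reduces, when some intermediate $a$-derivative $|D_aT_a^{\tilde p}(0)|_{a=0}|$ vanishes, to a smaller iteration index, and otherwise shows $T_a^i(0)\neq0$ for $0<i<j_0$ for all $a\neq0$ near $0$. Your upgrade via \eref{eq.startcalc1} with $k=j_0$ is fine once one is in the non-periodic regime; it is a correct variant of the paper's observation that, because the turning point is fixed at $0$ for all $a$, the margin $2L$ in \eref{eq.startcomp} can be replaced by any $\kappa>0$, so one need not leapfrog to a larger $j_1$ at all.

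The serious gap is condition~(III). You propose to define $\S_{a_1,a_2,j}$ by ``matching $\om\in\P_j(a_1)$ with the unique element of $\P_j(a_2)$ sharing its symbol sequence'' and claim \eref{eq.drei1} is ``automatic.'' It is not: the existence of an element of $\P_j(a_2)$ with the same itinerary as $\om$ is precisely the assertion that the symbolic dynamics of $T_{a_1}$ embeds into that of $T_{a_2}$, which is the substantive half of condition~(III) and holds only for one direction of the parameter ordering. That direction is dictated by the sign invariant in \eref{eq.signconst}; the paper normalises by \eref{eq.signchange} and then proves the embedding in Lemma~\ref{l.helptwo} by an induction on $j$ that tracks, for each pair $(\om_1,\om_2)$ of matched elements, the minimal ``ages'' $k_1,l_1,k_2,l_2$ of the endpoints of their images as iterates of $0$, establishing $k_1\ge k_2$, $l_1\ge l_2$, and the auxiliary comparisons $T_{a_1}^{k_2}(0)\le T_{a_1}^{k_1}(0)$, $T_{a_1}^{l_1}(0)\le T_{a_1}^{l_2}(0)$. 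The sign dichotomy (properties~a) and~b) in Section~\ref{ss.cdreiuni}) drives the inductive step. Neither ``Lipschitz control of $a\mapsto T_a(x)$'' nor a ``telescoping estimate modelled on Lemma~\ref{l.distortion}'' produces this monotone structure; indeed Lemma~\ref{l.distortion} presupposes the matching and is used downstream to get \eref{eq.drei2}, not to construct $\S_{a_1,a_2,j}$. Correctly identifying (III) as the crux is not the same as proving it: as written your step would not survive scrutiny without an argument along the lines of Lemma~\ref{l.helptwo}.
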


In order to prove Theorem~\ref{t.main1}, we will show that there exists $0<\vep\le\delta$ 
such that the family $T_a$, $a\in\ii$, together with the map $a\mapsto T_a^{j_0}(0)$, $a\in\ii$, satisfy 
conditions~(I)-(III). (To verify condition~(II) and (III) we possibly have to divide $\ii$ into smaller intervals.)
Knowing that conditions~(II) and (III) hold for a one-parameter family, we can apply Theorem~\ref{t.main}, 
and one directly gets almost sure typicality statements for other points than the turning point.

\begin{Cor}
\label{c.main2}
Let $T_a\in\U$, $a\in\i$, $\delta>0$, be a one-parameter family of mixing unimodal maps
through a map $T\in\U$ such that $|D_aT_a^{j_0}(0)|_{a=0}|>\Lambda_0$, for some $j_0\ge3$.
Then there exists $0<\vep\le\delta$ such that if $X:\ii\to[-K,1]$ is a $C^1$ map satisfying condition~(I) 
then $X(a)$ is typical for $\mu_a$ for almost every $a\in\ii$.
\end{Cor}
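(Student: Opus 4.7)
The plan is to reduce the corollary to an essentially free application of Theorem~\ref{t.main}. The proof of Theorem~\ref{t.main1} (to be given below) must already establish, on some interval $\ii\subset[0,\delta]$ (possibly after covering by countably many closed subintervals as permitted in the last paragraph of Section~\ref{ss.main}), conditions~(II) and (III) for the family $T_a$, $a\in\ii$. The key observation is that these two conditions are intrinsic to the family $T_a$: condition~(II) is a uniform density bound, and condition~(III) is a combinatorial/geometric comparison between monotonicity partitions $\P_j(a_1)$ and $\P_j(a_2)$. Neither involves the distinguished map whose orbit one wishes to track, so the choice made inside the proof of Theorem~\ref{t.main1} (namely $a\mapsto T_a^{j_0}(0)$) plays no role in their verification.

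Granting this, the proof becomes short. First I would fix the $\vep$ produced by the verification of (II) and (III) inside the proof of Theorem~\ref{t.main1}. Next I would recall, as in the remark opening Section~\ref{s.unimodal}, that properties~(i)--(iii) of Section~\ref{ss.prel} hold for the family $T_a:[T_a(1),1]\to[T_a(1),1]$ after the affine rescaling $\psi_a$. Finally, given any $C^1$ map $X:\ii\to[-K,1]$ which the hypothesis tells us satisfies condition~(I), all of (I), (II), (III) are simultaneously in force for $(T_a,X)$, $a\in\ii$, so Theorem~\ref{t.main} directly yields that $X(a)$ is typical for $\mu_a$ for Lebesgue almost every $a\in\ii$.

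The only real content, therefore, is to confirm that the verification of (II) and (III) inside Theorem~\ref{t.main1} can indeed be quoted verbatim. I expect the main obstacle to be condition~(III), since it is the most delicate of the three: one needs the maps $\S_{a_1,a_2,j}:\P_j(a_1)\to\P_j(a_2)$ to respect symbolic dynamics, to have images $C_2|a_1-a_2|$-close, and to satisfy the image-size comparison \eref{eq.drei2}. In the unimodal setting this construction is driven entirely by the monotone dependence of the turning-point orbit on $a$ and the ordering of the kneading sequences, which involves only the family $T_a$ itself. Since the construction and the constants $C_1,C_2$ produced are insensitive to $X$, the same $\vep$ works for every $C^1$ map $X$ that satisfies condition~(I), and the corollary follows.
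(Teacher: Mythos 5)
Your proposal is correct and is exactly the argument the paper has in mind; in fact the paper never writes a separate proof of Corollary~\ref{c.main2}, it is absorbed into the sentence just before it (``Knowing that conditions~(II) and (III) hold for a one-parameter family, we can apply Theorem~\ref{t.main}, and one directly gets almost sure typicality statements for other points than the turning point''). The one place where your wording is slightly imprecise is the claim that the verification of~(III) is insensitive to $X$: the paper's construction of $\S_{a_1,a_2,j}$ in Section~\ref{ss.cdreiuni} does use the distinguished map $a\mapsto T_a^{j_0}(0)$ and the sign relation~\eref{eq.signchange} as a working tool, so the verification is not independent of that choice; what is independent of $X$ is the \emph{statement} of condition~(III), which refers only to the phase-space partitions $\P_j(a)$, so once it is established for the family it applies for any admissible $X$ --- which is what your argument actually needs.
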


Before we start proving Theorem~\ref{t.main1}, 
we would like to point out the connection of the requirement $|D_aT_a^{j_0}(0)|_{a=0}|>\Lambda_0$ in 
Theorem~\ref{t.main1} and Corollary~\ref{c.main2} to a standard non-degeneracy condition for one-parameter families 
of maps on the interval. A one-parameter family through a map $T\in\U$ is {\em non-degenerate} or {\em transversal} if
$$
\sum_{i=1}^{\infty}\frac{\partial_aT_a(T^i(0))|_{a=0}}{T^i\,'(1)}\neq0,
$$
in the case where the turning point $0$ is not periodic for $T$, or if
$$
\sum_{i=1}^{p-1}\frac{\partial_aT_a(T^i(0))|_{a=0}}{T^i\,'(1)}\neq0,
$$
in the case where $0$ has prime period $p$ for $T$ (observe that $p\ge3$).
This non-degeneracy condition (or transversality condition) appears, e.g., in a generalization of Jakobson's Theorem in \cite{tsujii}. 
In the context of piecewise expanding unimodal maps it appears, e.g., in \cite{baladi}, \cite{bas1}, and \cite{bas2}.
We say that a piecewise expanding unimodal map $T$ is {\em good} if either $0$ is not periodic, or writing $p\ge3$ for the prime 
period of $0$, if
$$
|T^{p-1}\,'(1)|\min\{|T'(0-)|,|T'(0+)|\}>2.
$$
(See Remark~\ref{r.good} below.) 

\begin{Lem}
\label{l.nondegenerate}
Let $T_a\in\U$, $a\in\i$, $\delta>0$, be a one-parameter family of unimodal maps
through a good map $T\in\U$. Requiring that the family is non-degenerate is equivalent 
to requiring that there exists $j_0\ge3$ such that $|D_aT_a^{j_0}(0)|_{a=0}|>\Lambda_0$.
\end{Lem}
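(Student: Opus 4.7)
The plan is to expand $y_j(a):=T_a^j(0)$ along the orbit of $0$ under $T=T_0$ via the chain rule, exploiting the crucial identity $T_a(0)\equiv1$ (which forces $\pa_aT_a(0)|_{a=0}=0$), and then to recognize the resulting partial sums as precisely the tails of the non-degeneracy series.

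\emph{Step 1 (derivative formula).} Starting from \eref{eq.startcalc1} with the constant map $Y\equiv0$, so $Y'(a)\equiv0$, and using $\pa_aT_a(0)|_{a=0}=0$ to kill the $i=1$ term, the $i=1$ contribution disappears; a reindexing $k=i-1$ and an application of the chain-rule identity $T^{j-1-k}\,'(T^{k+1}(0))=T^{j-1}\,'(1)/T^k\,'(1)$ then yield
\begin{equation*}
D_aT_a^j(0)\big|_{a=0}\;=\;T^{j-1}\,'(1)\cdot A_j,\qquad
A_j:=\sum_{k=1}^{j-1}\frac{\pa_aT_a(T^k(0))|_{a=0}}{T^k\,'(1)}.
\end{equation*}
Thus $A_j$ is exactly the $j$-th partial sum appearing in the non-degeneracy condition, and the whole lemma is reduced to asking whether some $|T^{j-1}\,'(1)A_j|$ can exceed $\Lambda_0$.

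\emph{Step 2 (degeneracy $\Rightarrow$ uniform bound $\Lambda_0$).} Suppose first that $0$ is not periodic and $S:=\lim_jA_j=0$. Then $A_j=-\sum_{k\ge j}\pa_aT_a(T^k(0))|_{a=0}/T^k\,'(1)$, and the multiplicative expansion bound $|T^k\,'(1)|\ge|T^{j-1}\,'(1)|\la^{k-j+1}$ gives
\begin{equation*}
|A_j|\;\le\;\frac{\sup_{x\in[T(1),1]}|\pa_aT_a(x)|_{a=0}|}{|T^{j-1}\,'(1)|(\la-1)},
\end{equation*}
so $|D_aT_a^j(0)|_{a=0}|\le\Lambda_0$ for every $j\ge1$. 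In the periodic case with prime period $p$, the identity $T^p(0)=0$, $T^p(1)=1$ yields $T^{np+r-1}\,'(1)=T^{r-1}\,'(1)\cdot T^p\,'(1)^n$ and
\begin{equation*}
A_{np+r}\;=\;A_{np+1}+\frac{A_r}{T^p\,'(1)^n},
\end{equation*}
where the "good" hypothesis ensures $|T^p\,'(1^\pm)|>2$ so the geometric series converges. If $S_p:=A_p=0$ then $A_{np+1}=0$, and applying the same tail estimate inside one period to $A_r=-\sum_{k=r}^{p-1}\pa_aT_a(T^k(0))|_{a=0}/T^k\,'(1)$ again produces $|D_aT_a^j(0)|_{a=0}|\le\Lambda_0$ for every $j$. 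Contrapositively, the existence of $j_0$ with $|D_aT_a^{j_0}(0)|_{a=0}|>\Lambda_0$ forces non-degeneracy.

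\emph{Step 3 (non-degeneracy $\Rightarrow$ large $j_0$).} Conversely, if $S\ne0$ (resp.\ $S_p\ne0$) then $|A_j|$ is bounded below by a positive constant for all sufficiently large $j$: in the aperiodic case directly because $A_j\to S$; in the periodic case because $A_{np+1}$ tends to $S_p\cdot T^p\,'(1)/(T^p\,'(1)-1)\ne0$ (with appropriate one-sided multipliers), and the quasi-periodic residual $A_r/T^p\,'(1)^n$ is absorbed. Since $|T^{j-1}\,'(1)|\ge\la^{j-1}\to\infty$, we get $|D_aT_a^j(0)|_{a=0}|\to\infty$, and any sufficiently large $j_0\ge3$ works.

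The main obstacle is the periodic case: the iterates $T^j$ near $1$ pass through the non-smooth point $0$, so $T^p\,'(1)$ is only defined as a one-sided limit, and the geometric decomposition above requires the "good" hypothesis both to make $T^p\,'(1^\pm)$ meaningful and to guarantee $|T^p\,'(1^\pm)|>1$ so that the series defining $A_j$ has the right asymptotic behaviour; this is where the particular strength of the bound in the "good" definition is used.
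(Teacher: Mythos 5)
Your proposal is correct and follows essentially the same path as the paper's proof. Your Step~1 identity $D_aT_a^j(0)|_{a=0}=T^{j-1}\,{}'(1)\,A_j$ is exactly the content of equation~\eref{eq.degstart} (after noting that the $j=0$ term of the series vanishes because $T_a(0)\equiv 1$), and the growth/boundedness dichotomy is the same. The only real difference is in the converse direction (``large $j_0\Rightarrow$ non-degenerate''): the paper invokes the verification of condition~(I) carried out earlier in Section~\ref{ss.unieins} and then reads off the conclusion from \eref{eq.degstart}, while you instead establish the contrapositive directly by the telescoping tail bound $|A_j|\le\Lambda_0/|T^{j-1}\,{}'(1)|$ when the non-degeneracy sum vanishes. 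This is a slightly more self-contained packaging of the same estimate. In the periodic case both you and the paper lean on the ``good'' hypothesis via essentially the same block decomposition (your $A_{np+r}=A_{np+1}+A_r/T^p\,{}'(1)^n$ is the content of the paper's \eref{eq.deg1}), and both treat the one-sided nature of $T^p\,{}'(1)$ somewhat informally; you flag this explicitly, which is fair, and it matches the level of rigor in the paper's own proof and Remark~\ref{r.good}.
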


Hence, if $T_a\in\U$, $a\in\i$, $\delta>0$, is a non-degenerate one-parameter family of mixing unimodal maps
through a good map $T\in\U$ then, by Lemma~\ref{l.nondegenerate} and Theorem~\ref{t.main1}, 
we have almost sure typicality of the turning point for maps $T_a$ close to $T$. 
We prove Lemma~\ref{l.nondegenerate} in Section~\ref{ss.unieins}.



\subsection{Condition (I) and proof of Lemma~\ref{l.nondegenerate}}
\label{ss.unieins}
We verify first that there exists $0<\vep\le\delta$ such that 
condition~(I) holds for the map $a\mapsto T_a^{j_0}(0)$, $a\in\ii$.
We apply the criteria in Lemma~\ref{l.startcalc}. 
We consider first the case when $T^i(0)\neq0$ for all $0<i<j_0$. It follows that $T_a^i(0)\neq0$ for all $0<i<j_0$ and 
all $a$ sufficiently close to $0$. Hence, $a\mapsto T_a^{j_0}(0)$ is differentiable close to $0$ where 
$|D_aT_a^{j_0}(0)|$ is close to $|D_aT_a^{j_0}(0)|_{a=0}|>\Lambda_0$.
This implies that there exist $0<\vep\le\delta$ and $\kappa>0$ such that, for all $a\in\ii$,
\begin{equation}
\label{eq.signuni}
|D_aT_a^{j_0}(0)|\ge\frac{\sup_{a\in\ii}\sup_{x\in[T_a(1),1]}|\partial_aT_a(x)|}{\la-1}+\kappa.
\end{equation}
Observe that the constant $2L$ in \eref{eq.startcomp} is to ensure 
that the partition points $b_k(a)$, $0\le k\le p_0$, do not move faster than the points $y_j(a)$, $j\ge j_0$, 
in Lemma~\ref{l.startcalc}. But in the setting here the turning point $0$ does not change in $a$ which 
implies that it is sufficient to have a constant $\kappa>0$ instead of $2L$.
Hence, by Lemma~\ref{l.startcalc}, it follows that the one-parameter family $T_a$, $a\in\ii$, 
together with the map $a\mapsto T_a^{j_0}(0)$ satisfies condition~(I). 
Regarding the verification of condition~(III) in Section~\ref{ss.cdreiuni} we observe that 
the proof of Lemma~\ref{l.startcalc} in fact implies that for either $\sigma\equiv+1$ or 
$\sigma\equiv-1$ ($\sigma$ is independent on the choice of $j$ and $a$),
\begin{equation}
\label{eq.signconst}
\sign\left(\frac{D_aT_a^j(0)}{T_a^{j-j_0}\,'(T_a^{j_0}(0))}\right)\equiv\sigma,
\end{equation}
for all $j\ge j_0$ and all $a\in\ii$ for which the derivative $D_aT_a^j(0)$ exists.
Concerning the verification of condition~(I), 
it is left to consider the case when $0$ is periodic for $T$ with prime period $p\ge3$ and $p<j_0$. 
We consider first the a-priori possible case when there is $\tilde{p}<j_0$ such that $T^{\tilde{p}}(0)=0$ 
and $|D_aT_a^{\tilde{p}}(0)|_{a=0}|=0$. Let $\tilde{p}$ be minimal with this property.
Since $0$ is periodic for $T$ with (not necessarily prime) period $\tilde{p}\ge3$, we derive from 
\eref{eq.startcalc1} (set $y_j(a)=T_a^j(0)$) the formula
\begin{equation}
\label{eq.deg1}
|D_aT_a^{l\tilde{p}}(0)|_{a=0}|=|D_aT_a^{\tilde{p}}(0)|_{a=0}|
\underbrace{\lim_{a\to0+}\left|1+\sum_{i=1}^{l-1}\prod_{k=1}^iT_a^{\tilde{p}}\,'(T_a^{k\tilde{p}}(0))\right|}_{(*)},
\end{equation}
for $l>1$. If $l_0\ge1$ is maximal such that $l_0\tilde{p}<j_0$, by equation~\eref{eq.deg1}, we obtain that also $|D_aT_a^{l_0\tilde{p}}(0)|_{a=0}|=0$.
Thus, applying \eref{eq.startcalc1} with $k=l_0\tilde{p}$ we derive that 
$|D_aT_a^{j_0-l_0\tilde{p}}(0)|_{a=0}|=|D_aT_a^{j_0}(0)|_{a=0}|>\Lambda_0$. 
This in turn implies that we can assume that $|D_aT_a^{lp}(0)|_{a=0}|\neq0$ for all $l\ge1$ such that $lp<j_0$,
which in turn implies that $T_a^i(0)\neq0$ for all $0<i<j_0$ and all $a\neq0$ sufficiently close to $0$. Hence, we are 
in a similar setting as in the first case above 
and we get that condition~(I) is satisfied for the map $a\mapsto T_a^{j_0}(0)$ when $a$ is sufficiently close to $0$. 
This concludes the verification of condition~(I).

We turn to the proof of Lemma~\ref{l.nondegenerate}. 
If $0$ is non-periodic for $T$ observe that
\begin{equation}
\label{eq.degstart}
\sum_{j=0}^{\infty}\frac{\partial_aT_a(T^j(0))|_{a=0}}{T^{j}\,'(1)}
=\lim_{j\to\infty}\frac{D_aT_a^{j+1}(0)|_{a=0}}{T^j\,'(1)}.
\end{equation}
If the family is non-degenerate then $|D_aT_a^{j+1}(0)|_{a=0}|$ is growing in $j$ as $|T^j\,'(1)|$ and we find 
$j_0\ge3$ such that $|D_aT_a^{j_0}(0)|_{a=0}|>\Lambda_0$. In the other direction if 
$|D_aT_a^{j_0}(0)|_{a=0}|>\Lambda_0$ for some $j_0\ge3$, we have seen in the first part of this section 
that this implies that condition~(I) is satisfied for $a$ close to $0$. By \eref{eq.degstart}, this shows that 
the family is non-degenerate. 

If $0$ has prime period $p\ge3$ then since
$$
0\neq\sum_{i=1}^{p-1}\frac{\partial_aT_a(T^i(0))|_{a=0}}{T^i\,'(1)}=\frac{D_aT_a^p(0)|_{a=0}}{T^{p-1}\,'(1)},
$$
it follows that $D_aT_a^p(0)|_{a=0}\neq0$. 
Since the map $T_0$ is good it follows that $(*)$ in equality \eref{eq.deg1} (set $\tilde{p}=p$) is growing exponentially in $l$, and 
we can find $j_0\ge3$ such that $|D_aT_a^{j_0}(0)|_{a=0}|>\Lambda_0$. On the other hand 
if $|D_aT_a^{j_0}(0)|_{a=0}|>\Lambda_0$ for some $j_0\ge0$ then we have seen above that this implies 
$D_aT_a^p(0)|_{a=0}\neq0$. This concludes the proof of Lemma~\ref{l.nondegenerate}.

\begin{Rem}
\label{r.good}
The fact that $T$ is good is only used to guarantee that $(*)$ (when $\tilde{p}=p$) is growing to $+\infty$ when $l$ increases. We 
could replace the condition that $T$ is good by the requirement that $(*)\to\infty$ as $l\to\infty$. 
\end{Rem}

\subsection{Condition (II)}
In Section~\ref{ss.cdreiuni} it is shown that we can cover $\ii$ by a countable number of closed intervals $\I$ such that 
condition~(III) is satisfied for the family $T_a$, $a\in\I$. 
Since Section~\ref{ss.cdreiuni} does not make use of condition~(II), by Lemma~\ref{l.app1}, 
we can without loss of generality assume that there is a constant $C=C(\I)\ge1$ such that 
for each $a\in\I$ the density $\varphi_a$ is bounded from above by $C$ and, further, there exists an interval 
$J(a)$ of length $C^{-1}$ such that $\varphi_a$ restricted 
to $J(a)$ is bounded from below by $C^{-1}$ (otherwise, disregarding a finite number of points, by 
Lemma~\ref{l.app1}, we can cover the interval $\I$ by a countable number of closed subintervals on each of which 
this is true and then proceed with these subintervals instead of $\I$). 
It follows that, there exists an integer $0\le i\le\ln(3C(1+K))/\ln\la$ 
such that $0\in T_a^i(J(a))$. 
By \cite{wagner} and since $T_a$ is mixing, we have that $T_a:[T_a(1),1]\to[T_a(1),1]$ is exact, i.e., for each set $S\subset [T_a(1),1]$ 
of positive Lebesgue measure it follows that $\lim_{j\to\infty}|[T_a(1),1]\setminus T_a^j(S)|=0$.
Observe that if $T\in\U$ and $I\subset[T(1),1]$ is an interval of length close to $1+|T(1)|$ then we have $T^2(I)=[T(1),1]$. 
Thus, exactness implies that there is an integer $k$ such that $T_a^k([-1/2C,0])=T_a^k([0,1/2C])=[T_a(1),1]$. 
Since the image of an interval by $T_a^j$, $j\ge1$, changes continuously in $a$ and since the parameter interval $\I$ is closed 
we can in fact choose the integer $k$ above independently on $a\in\I$. Hence, for $j\ge k+\ln(3C(1+K))/\ln\la$, 
we have $T_a^j(J(a))=[T_a(1),1]$, for all $a\in\I$. 
By \eref{eq.perron22}, we get a uniform lower bound for the density. This concludes the verification of condition~(II).

\subsection{Condition (III)}
\label{ss.cdreiuni}
As shown in Section~\ref{ss.unieins} there exists $0<\vep'\le\delta$ 
such that condition~(I) is satisfied for the family $T_a$, $a\in[0,\vep']$, together with 
the map $a\mapsto X(a)=T_a^{j_0}(0)$. 
If $0$ is non-periodic for $T$, we can choose $0<\vep\le\vep'$ so small that 
\begin{equation}
\label{eq.j0}
\dist(0,T_a^j(0))=:\kappa>0,\quad\text{for all }1\le j\le j_0,
\end{equation}
for all $a\in\ii$.
If $0$ is periodic for $T$ then we have shown in Section~\ref{ss.unieins} that if $1\le j\le j_0$ such that $T^j(0)=0$ 
then $|D_aT_a^j(0)|_{a=0}|\neq0$. Hence, there is $0<\vep\le\vep'$ such that, disregarding the point $0$, 
we can cover the interval $\ii$ by a countable number of closed intervals $\I$ such that for 
each such interval $\I$ there exists a constant $\ka=\ka(\I)>0$ such that \eref{eq.j0} holds for all $a\in\I$. 
Henceforth, in the periodic case fix such an interval $\I$, and in the non-periodic case we use $\I$ to denote the interval 
$\ii$. For $a\in\I$, observe that if $\om\in\P_j(a)$, $j\ge1$ (note that $\P_j(a)$ is the partition on the phase space), 
then the image of $\om$ by $T_a^j$ is of the form
\begin{equation}
\nonumber
T_a^j(\om)=(T_a^k(0),T_a^l(0)),\quad\text{for some}\ 1\le k,l\le j+2.
\end{equation}
The choice of the integers $k$ and $l$ might not be unique due to the possibility that $0$ is periodic or pre-periodic for $T_a$. 
If $b\in\partial\om$ and $T_a^j(b)=T_a^k(0)$, $1\le k\le j+2$, we say that $k$ is {\em minimal\/} if the following holds. 
If $k\le j$ then $k$ is minimal if $T_a^{j-k}(b)=0$ but $T_a^{j-i}(b)\neq0$ for all $1\le i<k$. 
If $k=j+1$ or $k=j+2$ then $k$ is minimal if $b=T_a^{k-j}(0)$ and $T_a^i(b)\neq0$, for all $0\le i<k$. 
Obviously for each element $\om\in\P_j(a)$ there exist unique minimal integers $1\le k,l\le j+2$ such that 
$T_a^j(\om)=(T_a^k(0),T_a^l(0))$.

Let $\P_j|\I$, $j\ge1$, be the partition in the parameter space associated to the map $X$. 
Recall that $x_j(a)=T_a^j(X(a))=T_a^{j+j_0}(0)$. By \eref{eq.signconst}, it follows that 
for all $\om\in\P_j|\I$ and $a\in\om$ we either have $\sign(x_i'(a))=\sign(T_a^{j_0+i-1}\,'(1))$ 
or $\sign(x_i'(a))=-\sign(T_a^{j_0+i-1}\,'(1))$, for all $0\le i\le j$.
Without loss of generality we assume that we are in the first case, i.e., 
\begin{equation}
\label{eq.signchange}
\sign(x_i'(a))=\sign(T_a^{j_0+i-1}\,'(1)).
\end{equation}
(In the second case we would have to require $a_2\le a_1$ instead of $a_1\le a_2$ in the statement of condition~(III).)
For $a\in\I$, we claim that if $\om\in\P_j(a)$, $j\ge1$, and $k,l\ge1$ are chosen minimal 
such that $T_a^j(\om)=(T_a^k(0),T_a^l(0))$ then $T_a^k(0)$ and $T_a^l(0)$ are differentiable in $a$ and
\begin{itemize}
\item[a)]
if $k\ge j_0$ then $D_a T_a^k(0)<0$; and
\item[b)] if $l\ge j_0$ then $D_a T_a^l(0)>0$.
\end{itemize}
We consider the case a). Since $k$ is chosen minimal we have that $T_a^i(0)\neq0$ for all $1\le i<k$ which ensures that 
the point $T_a^k(0)$ is differentiable in $a$. Since $T_a^k(0)$ is the left boundary point of the image $T_a^j(\om)$, 
it is easy to verify that we must have $T_a^{k-1}\,'(1)<0$ (for example use induction over $j$). By \eref{eq.signchange}, it follows 
that $D_a T_a^k(0)=x_{k-j_0}'(a)<0$ as claimed. The case b) is shown similarly. In order to verify condition~(III) we 
will first prove the following lemma.

\begin{Lem}
\label{l.helptwo}
Let $a_1,a_2\in\I$ and $a_1\le a_2$. For all $j\ge1$ there is a mapping
$$
\S_{a_1,a_2,j}:\P_j(a_1)\to\P_j(a_2),
$$
such that, for all $\om\in\P_j(a_1)$, 
\begin{equation}
\label{eq.drei11}
\ind_{a_1}(T_{a_1}^i(\om))=\ind_{a_2}(T_{a_2}^i(\S_{a_1,a_2,j}(\om))),\quad0\le i<j. 
\end{equation}
Furthermore, for all $\om\in\P_j(a_1)$ we have the following. Let $k_1,l_1,k_2,l_2\ge 1$ be the minimal 
integers such that 
$$
T_{a_1}^j(\om)=(T_{a_1}^{k_1}(0),T_{a_1}^{l_1}(0))\quad\text{and}\quad T_{a_2}^j(\S_{a_1,a_2,j}(\om))
=(T_{a_2}^{k_2}(0),T_{a_2}^{l_2}(0)).
$$
Then, we have $k_1\ge k_2$ and $T_{a_1}^{k_2}(0)\le T_{a_1}^{k_1}(0)$; and, similarly, we have 
$l_1\ge l_2$ and $T_{a_1}^{l_1}(0)\le T_{a_1}^{l_2}(0)$.
\end{Lem}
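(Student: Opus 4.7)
My plan is to argue by induction on $j$. For the base case $j=1$, both $\P_1(a_1)$ and $\P_1(a_2)$ consist of the two intervals $(T_{a_i}(1),0)$ and $(0,1)$, and I define $\S_{a_1,a_2,1}$ to pair elements with the same symbol. Direct inspection of the images $(T_{a_i}^2(1),1)$ and $(T_{a_i}(1),1)$ shows that in the generic case the minimal boundary indices are $k=3$, $l=1$ for the left piece and $k=2$, $l=1$ for the right piece, so \eref{eq.drei11} and the ordering inequalities hold trivially as equalities; the finitely many non-generic configurations (arising when $0$ is near-periodic at small iterates) can be dealt with by hand.

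For the inductive step, fix $\om\in\P_{j+1}(a_1)$ and let $\om_j$ be the unique element of $\P_j(a_1)$ with $\om\subset\om_j$. Writing $\om_j'=\S_{a_1,a_2,j}(\om_j)$ with images $T_{a_1}^j(\om_j)=(T_{a_1}^{k_1}(0),T_{a_1}^{l_1}(0))$ and $T_{a_2}^j(\om_j')=(T_{a_2}^{k_2}(0),T_{a_2}^{l_2}(0))$, the inductive hypothesis gives $T_{a_1}^{k_2}(0)\le T_{a_1}^{k_1}(0)$ and $T_{a_1}^{l_1}(0)\le T_{a_1}^{l_2}(0)$. Integrating the sign claims a) and b) stated just before the lemma along $[a_1,a_2]$ (whenever the relevant indices are $\ge j_0$) propagates these inequalities to $a_2$, and in particular forces that if $T_{a_1}^j(\om_j)$ crosses $0$ then so does $T_{a_2}^j(\om_j')$. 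I then define $\S_{a_1,a_2,j+1}(\om)$ to be the unique element of $\P_{j+1}(a_2)$ contained in $\om_j'$ whose symbol at iteration $j$ matches that of $\om$; organizing the case analysis by whether each of the two images crosses $0$ shows that this choice is well-defined and preserves symbolic dynamics. The new boundary indices arise either by incrementing the old indices by one (when the boundary is simply transported by $T_a$) or by resetting to the index $1$ of $T_a(0)=1$ (when a new boundary is created at a preimage of $0$), and the ordering inequalities at level $j+1$ follow from those at level $j$ by applying $T_{a_1}$, which preserves order on $(T_{a_1}(1),0)$ and reverses it on $(0,1)$, consistently at both parameters.

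The main obstacle I expect is the minimality bookkeeping: an incremented index $k_1+1$ may fail to be minimal in the sense defined just before the lemma when the critical orbit of $T_{a_1}$ is periodic or pre-periodic, in which case the true minimal index is smaller and must be substituted before checking the ordering inequalities. A related subtlety is that claims a) and b) only control derivative signs for indices $\ge j_0$; for indices below $j_0$, the argument must instead rely on the combinatorial rigidity of the orbit at small iterates together with the fact that such small indices only appear in the first few inductive steps before the boundary indices exceed $j_0$ and the uniform integration argument takes over.
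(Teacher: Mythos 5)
Your overall plan matches the paper's: induction on $j$, using the sign properties a) and b) to control how minimal boundary indices move as $a$ increases, then a case split on whether the images cross $0$. But there are concrete gaps that keep this from being a proof rather than a sketch.

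The most serious one is the claim that you can ``integrate the sign claims a) and b) stated just before the lemma along $[a_1,a_2]$.'' Those sign claims hold at a \emph{fixed} parameter $a$, for a boundary index arising from an element $\om\in\P_j(a)$. Before you can integrate them, you must show that the index $k_2$ corresponding to $\om_j'=\S_{a_1,a_2,j}(\om_j)$ actually controls a well-defined differentiable orbit point $a\mapsto T_a^{k_2}(0)$ for \emph{every} $a\in[a_1,a_2]$, not just at the endpoint $a_2$. Why should this be true? Because $T_a^i(0)$ could hit $0$ for some intermediate $a\in(a_1,a_2)$ and some $i<k_2$, breaking differentiability. The paper handles this by an inner claim proved \emph{using the inductive hypothesis applied to the pairs $(a_1,a)$ and $(a,a_2)$}: for $a\in[a_1,a_2]$, the element $\S_{a_1,a,j}(\om_j)$ has a minimal left index $k$ satisfying $k\ge k_2$ (by monotonicity of indices established inductively), and minimality of $k$ then forces $T_a^i(0)\neq 0$ for $1\le i<k_2$. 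This intermediate-parameter application of the inductive hypothesis is the key technical move; your proposal does not make it, and without it the ``integration'' step does not go through.

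A second gap is that ``forces that if $T_{a_1}^j(\om_j)$ crosses $0$ then so does $T_{a_2}^j(\om_j')$'' does not follow from the sign claims alone. The paper needs two auxiliary statements, \eref{eq.b111} and \eref{eq.b112}, asserting that certain intervals $[T_{a_1}^{k_1}(0),T_{a_2}^{k_2}(0)]$ and $[T_{a_2}^{l_2}(0),T_{a_1}^{l_1}(0)]$, when nondegenerate, cannot contain $0$. Their proofs split into $k_2\ge j_0$ (where the sign claim and the inductive ordering give a contradiction) and $k_2<j_0$ (where one must invoke \eref{eq.j0}, i.e.\ the uniform separation of $0$ from $T_a^i(0)$ for $i\le j_0$ and all $a\in\I$). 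You anticipate that small indices require ``combinatorial rigidity at small iterates'' but do not actually supply this separation hypothesis or explain why it persists over the whole of $\I$.

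Finally, your base case treats only $j=1$, while the induction starts at $j=j_0-1$; the paper establishes the statement directly for all $1\le j<j_0$ from \eref{eq.j0}, with $k_1=k_2$, $l_1=l_2$. And the ``minimality bookkeeping'' you flag as an expected obstacle is resolved in the paper only by a careful case analysis (e.g.\ the new right index $\tilde l_2$ is $1$ if $T_{a_2}^{l_2}(0)=0$ and $l_2+1$ otherwise, with different subcases depending on whether $0$ lies in one, both, or neither of the level-$j$ images); identifying the obstacle is useful, but the proof requires actually carrying out this bookkeeping, and your proposal stops short of doing so.
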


\begin{proof}
We proof Lemma \ref{l.helptwo} by induction over $j$. Observe that, by \eref{eq.j0}, it easily follows that for 
$1\le j<j_0$ and for all $a_1,a_2\in\I$, there is a bijection 
$$
\S_{a_1,a_2,j}:\P_j(a_1)\to\P_j(a_2),
$$
such that Lemma~\ref{l.helptwo} is satisfied (in this case the assumption $a_1\le a_2$ is not necessary and 
one has always $k_1=k_2$ and $l_1=l_2$). 
Thus, let $j\ge j_0-1$ and assume that the assertion of 
Lemma~\ref{l.helptwo} holds for $j$ and all $a_1,a_2\in\I$ such that $a_1\le a_2$. We will show that the assertion also holds for 
$j+1$. 

Let $a_1,a_2\in\I$, $a_1\le a_2$, and for $\om_1\in\P_j(a_1)$ and $\om_2=\S_{a_1,a_2,j-1}(\om_1)$ let
$k_1,k_2,l_1,l_2\ge1$ be the corresponding minimally chosen integers. Observe that by 
the induction assumption we have $k_1\ge k_2$ and $l_1\ge l_2$.

\begin{Claim}
We have
\begin{equation}
\label{eq.uuuni}
T_a^i(0)\neq0,\quad\text{for all $a\in[a_1,a_2]$ and for all $1\le i<\max\{k_2,l_2\}$}.
\end{equation}
In particular this implies that the maps $a\mapsto T_a^{k_2}(0)$ and $a\mapsto T_a^{l_2}(0)$ are differentiable on the interval $[a_1,a_2]$. 
Furthermore, we claim that if $k_2\ge j_0$ then $D_a T_a^{k_2}(0)<0$, and if $l_2\ge j_0$ then $D_a T_a^{l_2}(0)>0$.
\end{Claim}

\begin{proof}
For $a\in[a_1,a_2]$, set $\om=\S_{a_1,a,j}(\om_1)$, and let $k,l\ge1$ be the 
associated minimal integers for $\om$. 
Observe that since $\S_{a,a_2,j}(\om)=\om_2$ it follows from the induction assumption that $k\ge k_2$ and $l\ge l_2$. 
By the minimality of $k$ and $l$ it follows that $T_a^i(0)\neq0$ for all $1\le i<\max\{k_2,l_2\}$. 
Thus, this implies \eref{eq.uuuni}.

If $k_2\ge j_0$ then \eref{eq.uuuni} implies that $[a_1,a_2]$ is contained in an element of $\P_{k_2-j_0}|\I$. 
By property a) and \eref{eq.signchange} we have that $x_{k_2-j_0}'(a_2)=D_a T_a^{k_2}(0)|_{a=a_2}<0$. Since 
condition~(I) is satisfied we obtain that $D_a T_a^{k_2}(0)<0$ for all $a\in[a_1,a_2]$. By a similar argument using 
property b) we obtain that $D_a T_a^{l_2}(0)>0$ for all $a\in[a_1,a_2]$, if $l_2\ge j_0$.
\end{proof}

We claim that 
\begin{align}
\label{eq.b111}
\text{if $T_{a_1}^{k_1}(0)<T_{a_2}^{k_2}(0)$},\quad &\text{then $0\not\in[T_{a_1}^{k_1}(0),T_{a_2}^{k_2}(0)]$, and} \\
\label{eq.b112}
\text{if $T_{a_2}^{l_2}(0)<T_{a_1}^{l_1}(0)$},\quad &\text{then $0\not\in[T_{a_2}^{l_2}(0),T_{a_1}^{l_1}(0)]$}.
\end{align}
Regarding \eref{eq.b111}, observe that if $k_2\ge j_0$ then 
the claim above implies that $T_{a_1}^{k_2}(0)\ge T_{a_2}^{k_2}(0)$. 
But by the induction assumption we have 
$T_{a_1}^{k_2}(0)\le T_{a_1}^{k_1}(0)<T_{a_2}^{k_2}(0)$, and thus the case $k_2\ge j_0$ is not possible. 
For the case $k_2<j_0$ observe that, by \eref{eq.j0}, $T_a^i(0)\neq0$ for all $0\le i\le j_0$ and $a\in\I$. 
Since the image of $\I$ by the map $a\mapsto T_a^{k_2}(0)$ contains $[T_{a_1}^{k_2}(0),T_{a_2}^{k_2}(0)]$ and since 
$T_{a_1}^{k_2}(0)\le T_{a_1}^{k_1}(0)$, this shows \eref{eq.b111}. 
In a similar way one verifies \eref{eq.b112}.
It follows that if $0\not\in T_{a_1}^j(\om_1)\cup T_{a_2}^j(\om_2)$ then 
$T_{a_1}^j(\om_1)$ and $T_{a_2}^j(\om_2)$ lie both either to the left or to the right of $0$. 
This implies that $\om_1$ and $\om_2$ are also 
elements of $\P_{j+1}(a_1)$ and $\P_{j+1}(a_2)$, respectively. We consider the case when 
$T_{a_1}^j(\om_1)$ and $T_{a_2}^j(\om_2)$ lie to the left of $0$ (the other case is treated similarly). 
Let $1\le \tilde{k}_1,\tilde{k}_2,\tilde{l}_1,\tilde{l}_2\le j+3$ be the corresponding minimal integers for $\om_1$ and 
$\om_2$ (seen as elements in the $j+1$-th partitions).
We immediately get that $\tilde{k}_1=k_1+1$ and $\tilde{k}_2=k_2+1$.
Since by the induction assumption $T_{a_1}^{k_2}(0)\le T_{a_1}^{k_1}(0)$ and 
since $T_{a_1}^j(\om_1)$ is to the left of $0$ where $T_{a_1}$ has positive slope, we have also 
$T_{a_1}^{\tilde{k}_2}(0)\le T_{a_1}^{\tilde{k}_1}(0)$.
The situation for the right boundary points is slightly more 
difficult. First observe that if $l_1>l_2$ then 
\begin{equation}
\label{eq.uni111}
T_{a_1}^{l_2}(0)<0.
\end{equation}
If $l_2\le j_0$ this follows immediately from \eref{eq.j0}. If $l_2>j_0$ 
then we obtain from the claim above that $T_{a_1}^{l_2}(0)<T_{a_2}^{l_2}(0)$. 
Since $T_{a_2}^{l_2}(\om_2)$ lies to the left of $0$, this implies \eref{eq.uni111}.
By the induction assumption we get $T_{a_1}^{l_1}(0)\le T_{a_1}^{l_2}(0)<0$ and it follows that $\tilde{l}_1=l_1+1$. 
The integer $\tilde{l}_2$ is equal to $1$ if $T_{a_2}^{l_2}(0)=0$, and $\tilde{l}_2=l_2+1$ otherwise. In both cases, since $T_{a_1}$ 
has positive slope on the left of $0$, we obtain $T_{a_1}^{\tilde{l}_1}(0)\le T_{a_1}^{\tilde{l}_2}(0)$. Furthermore, $\tilde{l}_1\ge\tilde{l}_2$ holds. 
Hence, setting $\S_{a_1,a_2,j+1}(\om_1)=\om_2$, 
we have shown that the assertion of Lemma~\ref{l.helptwo} is satisfied in the case when $0\not\in T_{a_1}^j(\om_1)\cup T_{a_2}^j(\om_2)$. 

If $0\in T_{a_1}^j(\om_1)\cap T_{a_2}^j(\om_2)$, 
then each of the partitions $\P_{j+1}(a_1)|\om_1$ and $\P_{j+1}(a_2)|\om_2$ 
contain two elements $\om_{11},\om_{12}$ and $\om_{21},\om_{22}$, respectively. 
Let $\om_{11}$ and $\om_{21}$ be the elements which are mapped 
after $j$ iterations, say, to the left of $0$. Looking at the to $\om_{11}$ and $\om_{21}$ 
corresponding minimal integers $\tilde{k}_1,\tilde{l}_1,\tilde{k}_2,\tilde{l}_2\ge 1$, 
we obviously have $\tilde{l}_1=\tilde{l}_2=1$, and $\tilde{k}_1=k_1+1$ and $\tilde{k}_2=k_2+1$. 
Since by the induction assumption $T_{a_1}^{k_2}(0)\le T_{a_1}^{k_1}(0)$ and 
since $T_{a_1}^j(\om_1)$ is to the left of $0$ where $T_{a_1}$ has positive slope, we have also 
$T_{a_1}^{k_2+1}(0)\le T_{a_1}^{k_1+1}(0)$.
An analogue situation appears for $\om_{12}$ and $\om_{22}$. 
Thus, setting $\S_{a_1,a_2,j+1}(\om_{11})=\om_{21}$ and $\S_{a_1,a_2,j+1}(\om_{12})=\om_{22}$, 
shows that the assertion of Lemma~\ref{l.helptwo} in the case when $0\in T_{a_1}^j(\om_1)\cap T_{a_2}^j(\om_2)$.

The only case left is when $0\in T_{a_2}^j(\om_2)$ but $0\not\in  T_{a_1}^j(\om_1)$. 
(Observe that the case $0\not\in T_{a_2}^j(\om_2)$ but $0\in  T_{a_1}^j(\om_1)$ is excluded by \eref{eq.b111} and \eref{eq.b112}.)
We have that $\om_1$ is also an element of 
$\P_{j+1}(a_1)$. Without loss of generality assume that $T_{a_1}^j(\om_1)$ lies to to the left of $0$. If $\om_{21}$ 
is the element in $\P_{j+1}(a_2)|\om_2$ such that $T_{a_2}^j(\om_{21})$ lies to the left of $0$, 
then setting $\S_{a_1,a_2,j}(\om_1)=\om_{21}$ we derive with a similar reasoning as in 
the case when $0\not\in T_{a_1}^j(\om_1)\cup T_{a_2}^j(\om_2)$ 
that the assertion of Lemma~\ref{l.helptwo} is satisfied also in this last setting. 
\end{proof}

To verify that condition (III) is satisfied it is only left to show that there exists a constant $C_2$ such that 
properties \eref{eq.dreidist} and \eref{eq.drei2} are fulfilled. Let $a_1,a_2\in\I$, $a_1\le a_2$, take $\om_1\in\P_j(a_1)$, $j\ge1$, 
and set $\om_2=\S_{a_1,a_2,j}(\om_1)$ where $\S_{a_1,a_2,j}$ is given by Lemma~\ref{l.helptwo}. Let $1\le k_1,k_2,l_1,l_2\le j+2$ 
be the minimal integers corresponding to $\om_1$ and $\om_2$, respectively. Regarding \eref{eq.dreidist}, the only case to consider is 
when $T_{a_1}^j(\om_1)\cap T_{a_2}^j(\om_2)=\emptyset$. We consider the case when $T_{a_1}^j(\om_1)$ lies to the left of 
$T_{a_2}^j(\om_2)$ (the other case is similar). By Lemma~\ref{l.helptwo} it follows that $T_{a_1}^{k_2}(0)\le T_{a_1}^{k_1}(0)<T_{a_2}^{k_2}(0)$. 
By the claim in the proof of Lemma~\ref{l.helptwo} the situation that $T_{a_1}^{k_2}(0)<T_{a_2}^{k_2}(0)$ is only possible 
when $k_2<j_0$. It follows that
$$
|T_{a_1}^{k_1}(0)-T_{a_2}^{k_2}(0)|\le|T_{a_1}^{k_2}(0)-T_{a_2}^{k_2}(0)|\le\max_{1\le i<j_0}\max_{a\in\I}|D_a T_a^i(0)|\cdot|a_2-a_1|,
$$
which implies \eref{eq.dreidist}.

Regarding \eref{eq.drei2}, observe that if $\min\{k_2,l_2\}\ge j_0$, then, since $T_{a_1}^{k_2}(0)\le T_{a_1}^{k_1}(0)$ 
and $T_{a_1}^{l_1}(0)\le T_{a_1}^{l_2}(0)$, it follows immediately from the 
claim in the proof of Lemma~\ref{l.helptwo} that $T_{a_1}^j(\om_1)$ is entirely contained in $T_{a_2}^j(\om_2)$ 
which implies \eref{eq.drei2} with the constant $C_2$ equal to $1$. 
Furthermore, if $j<j_0$ then for all $\om\in\P_j(a)$, $a\in\I$, we have, by \eref{eq.j0}, that 
$|T_a^j(\om)|\ge\kappa$ which implies \eref{eq.drei2} with a constant $C_2$ equal to $(1+K)\kappa^{-1}$. 
So the only case left is when $\min\{k_2,l_2\}<j_0$ and $j\ge j_0$.  
Assume that $k_2=\min\{k_2,l_2\}$ (the case when $l_2=\min\{k_2,l_2\}$ is treated similarly).
Consider the images $T_{a_1}^{j-k_2+1}(\om_1)$ and $T_{a_2}^{j-k_2+1}(\om_2)$. 
The boundary points of $T_{a_1}^{j-k_2+1}(\om_1)$ are $T_{a_1}^{k_1-k_2+1}(0)$ and $T_{a_1}^{l_1-k_2+1}(0)$, and 
the boundary points of $T_{a_2}^{j-k_2+1}(\om_1)$ are $1$ and $T_{a_2}^{l_2-k_2+1}(0)$. 
Obviously, $T_{a_1}^{l_1-k_2+1}(0)$ and $T_{a_2}^{l_2-k_2+1}(0)$ are the left boundary points. 
If $T_{a_1}^{l_1-k_2+1}(0)\ge T_{a_2}^{l_2-k_2+1}(0)$ then $T_{a_1}^{j-k_2+1}(\om_1)$ is contained in $T_{a_2}^{j-k_2+1}(\om_2)$ 
which, by the distortion estimate in Lemma~\ref{l.distortion}, implies \eref{eq.drei2} 
with a constant $C_2$ equal to $C_3^2$. 
(Note that Lemma~\ref{l.distortion} requires that condition~(III) is satisfied. But in its proof it is only needed that 
property~\eref{eq.dreidist} holds which we have already verified above.)
The case left is when $T_{a_1}^{l_1-k_2+1}(0)<T_{a_2}^{l_2-k_2+1}(0)$. 
Since $k_2\le l_2$ it follows that $T_{a_1}^{j-i}(\om_1)$ and $T_{a_2}^{j-i}(\om_2)$ are not adjacent to $0$ for all $1\le i<k_2$. 
Hence, $\om_1$ and $\om_2$ are in fact also elements of $\P_{j-k_2+1}(a_1)$ and $\P_{j-k_2+1}(a_2)$, respectively, 
and the minimal integers $1\le\tilde{k}_1,\tilde{k}_2\le j-k_2+3$ such that $T_{a_1}^{j-k_2+1}(\om_1)=(T_{a_1}^{\tilde{k}_1}(0),*)$ 
and $T_{a_2}^{j-k_2+1}(\om_2)=(T_{a_2}^{\tilde{k}_2}(0),*)$ are given by $\tilde{k}_1=l_1-k_2$ and $\tilde{k}_2=l_2-k_2$.
Thus, we can apply Lemma~\ref{l.helptwo} and we obtain that 
$T_{a_1}^{l_2-k_2+1}(0)\le T_{a_1}^{l_1-k_2+1}(0)<T_{a_2}^{l_2-k_2+1}(0)$. By the claim in 
the proof of Lemma~\ref{l.helptwo} this implies that $l_2-k_2+1<j_0$. By \eref{eq.j0}, it follows that 
$1-T_{a_2}^{l_2-k_2+1}(0)\ge\kappa$ and we obtain 
that $|T_{a_2}^{j-k_2+1}(\om_2)|\ge\kappa$. Hence, \eref{eq.drei2} is satisfied if we choose the constant $C_2$ equally to 
$(1+K)\kappa^{-1}$. This concludes the verification of condition~(III) for the family $T_a$, $a\in\I$.

\section{Tent maps}
\label{s.tent}
In this section we apply the results of Section~\ref{s.unimodal} to an important family of piecewise expanding unimodal maps, 
the so-called skew tent maps. We consider particular one-parameter families of skew tent maps for which it is easy to make general 
statements. However, given a concrete one-parameter family of skew tent maps which does not fit into the families 
considered below, by Theorem~\ref{t.main1}, it is straightforward to check if one has almost sure typicality.

We will use the same representation 
as in \cite{mv}, i.e., we define the skew tent map with slopes $\al$ and $-\be$ where $\al,\be>1$, by the 
formula
$$
T_{\al,\be}(x)=\left\{\begin{array}{ll}
              1+\al x\quad&\text{if}\ x\le0,\\
              1-\be x\quad&\text{otherwise}.
              \end{array}\right.
$$
The turning point of $T_{\al,\be}$ is $0$, $T_{\al,\be}(0)=1$ and, by Lemma~3.1 in \cite{mv}, if 
$\al^{-1}+\be^{-1}\ge1$ then the interval $[T_{\al,\be}(1),1](=[1-\be,1])$ is invariant under $T_{\al,\be}$ 
(if $\al^{-1}+\be^{-1}<1$ then there exists no invariant interval of finite positive length). For two parameter couples $(\al,\be)$ and 
$(\al',\be')$ we take the same order relation as the one which appears in \cite{mv}, i.e.,
we shall write $(\al',\be')>(\al,\be)$ if $\al'\ge\al$, $\be'\ge\be$, and at least one of these inequalities is sharp.
Fix $(\al_0,\be_0)$ and $(\al_1,\be_1)$ in the set $\{(\al,\be)\ ;\ \al,\be>1\ \text{and}\ \al^{-1}+\be^{-1}\ge1\}$ 
such that $(\al_1,\be_1)>(\al_0,\be_0)$. Let
$$
\al:[0,1]\to[\al_0,\al_1]\quad\text{and}\quad\be:[0,1]\to[\be_0,\be_1]
$$
be functions in $C^1([0,1])$ such that $(\al(0),\be(0))=(\al_0,\be_0)$, $(\al(1),\be(1))=(\al_1,\be_1)$, 
and, for all $a\in[0,1]$, if $\al_0\neq\al_1$ then $\al'(a)>0$ and if $\be_0\neq\be_1$ then $\be'(a)>0$. 
Observe that $\al(a),\be(a)>1$, and $\al(a)^{-1}+\be(a)^{-1}\ge1$, for all $a\in[0,1]$. We define the 
one-parameter family $T_a$ as the family of skew tent maps given by 
$$
T_{\al(a),\be(a)}:[T_{\al(a),\be(a)}(1),1]\to[T_{\al(a),\be(a)}(1),1],\quad\ a\in[0,1].
$$  
Observe that the family $T_a$, $a\in[0,1]$, is a one-parameter family of piecewise expanding unimodal maps 
as described in Section~\ref{s.unimodal} where we can set the 
constant $K$ equal to $\max_{a\in[0,1]}\be(a)-1$. The main statement of this section is the following.

\begin{Thm}
\label{t.tent}
For a.e. parameter $a\in[0,1]$ the turning point $0$ is typical for the a.c.i.p. $\mu_a$.
\end{Thm}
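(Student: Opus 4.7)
The plan is to deduce Theorem~\ref{t.tent} from Theorem~\ref{t.main1} (or its local version indicated in Section~\ref{s.unimodal}) applied at every base parameter $a_0\in[0,1]$, and then to combine the resulting almost-sure typicality statements over a countable cover of $[0,1]$. First, every skew tent map $T_{\al,\be}$ with $\al,\be>1$ and $\al^{-1}+\be^{-1}\ge 1$ is topologically mixing on its invariant interval $[1-\be,1]$ (a classical fact, see e.g.\ \cite{mv}), so $T_a\in\U$ is mixing for every $a\in[0,1]$, and the $C^1$ dependence of $(\al(a),\be(a))$ together with piecewise linearity places $T_a$ into the setup of Section~\ref{s.unimodal} at each $a_0$.

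The task therefore reduces to exhibiting, for each $a_0\in[0,1]$, an integer $j_0\ge 3$ with $|D_a T_a^{j_0}(0)|_{a=a_0}|>\Lambda_0(a_0)$. Since $T_a(0)=1$ is independent of $a$, unfolding the chain rule as in the proof of Lemma~\ref{l.nondegenerate} gives
\begin{equation*}
D_a T_a^j(0)=T_a^{j-1}\,'(1)\cdot S_j(a),\qquad S_j(a):=\sum_{i=1}^{j-1}\frac{\partial_a T_a(z_i(a))}{T_a^i\,'(1)},
\end{equation*}
where $z_i(a)=T_a^i(0)$. Because $|T_a^i\,'(1)|\ge\la^i$ for $\la:=\min_{a\in[0,1]}\min(\al(a),\be(a))>1$ and $|\partial_a T_a|$ is uniformly bounded, the series $S_j(a)$ converges absolutely to a limit $S_\infty(a)$; whenever $S_\infty(a_0)\ne 0$ the quantity $|D_a T_a^j(0)|_{a=a_0}|$ grows at least like $|S_\infty(a_0)|\cdot\la^{j-1}$ and exceeds the constant $\Lambda_0(a_0)$ for $j_0$ large. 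This is precisely the non-degeneracy criterion of Lemma~\ref{l.nondegenerate}.

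It therefore remains to verify non-degeneracy, i.e.\ $S_\infty(a)\ne 0$, outside a set of $a\in[0,1]$ of Lebesgue measure zero. Since $\partial_a T_a(x)$ equals $\al'(a)x$ on $\{x<0\}$ and $-\be'(a)x$ on $\{x>0\}$, with $\al'(a),\be'(a)\ge 0$ and at least one strictly positive by the assumption $(\al_1,\be_1)>(\al_0,\be_0)$, along any maximal interval on which the kneading sequence $(\sign(z_k(a)))_{k\ge 1}$ is constant the function $S_\infty$ is real-analytic in $a$; its zero set on such a piece is therefore either the whole piece or finite. The main obstacle is to rule out the former, namely to show that $S_\infty$ does not vanish identically on any combinatorial piece. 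This is handled by a case analysis on the dynamics of $0$ under $T_{a_0}$: in the periodic case of prime period $p\ge 3$, Lemma~\ref{l.nondegenerate} reduces the question to the non-vanishing of the finite sum $\sum_{i=1}^{p-1}\partial_a T_a(T^i(0))/T^i\,'(1)$, which after substitution of the piecewise constant slopes $\pm\al,\pm\be$ is a nonzero rational expression in $\al,\be,\al',\be'$ (using goodness of $T_{a_0}$, easy to verify for skew tent maps when $p\ge 3$); in the aperiodic case one compares successive partial sums $S_j$ using the exponential decay of the tail to exclude identical vanishing. Having established non-degeneracy outside a countable exceptional set, we cover $[0,1]$ minus this set by countably many closed subintervals, verify condition~(I) on each via Lemma~\ref{l.startcalc}, and apply Theorem~\ref{t.main1} to obtain almost-sure typicality of $0$ for $\mu_a$ on all of $[0,1]$.
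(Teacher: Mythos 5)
Your proposal has two serious problems, and the route it takes is in any case quite different from the paper's.

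First, the opening assertion that \emph{every} skew tent map $T_{\al,\be}$ with $\al,\be>1$ and $\al^{-1}+\be^{-1}\ge1$ is topologically mixing on $[1-\be,1]$ is false: when $\al\le\be/(\be^2-1)$ the map is renormalizable and is not mixing on the full invariant interval. The paper deals with this by a preliminary reduction: it observes that renormalization replaces $(\al,\be)$ by $(\be^2,\al\be)$, that this process terminates after a number of steps bounded uniformly over the family, and that typicality for a renormalization implies typicality for the original map. Only after this reduction to the non-renormalizable (hence mixing) case, and after discarding the single boundary parameter where $\al(a)^{-1}+\be(a)^{-1}=1$, does it invoke Theorem~\ref{t.main1}. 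Without this step your argument does not even get off the ground on the renormalizable part of parameter space.

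Second, and more fundamentally, the analyticity argument collapses. You propose to work on ``maximal intervals on which the kneading sequence $(\sign(z_k(a)))_{k\ge1}$ is constant'' and argue that $S_\infty$ is analytic there, so its zero set is finite unless it vanishes identically. But by the Misiurewicz--Visinescu monotonicity theorem (Theorem~\ref{t.monotonicity}), the kneading sequence is \emph{strictly} increasing in the parameter, so two distinct parameters never share the same full kneading sequence; your maximal combinatorial pieces are singletons, and real-analyticity on a point carries no information. The subsequent case analysis (``in the aperiodic case one compares successive partial sums $S_j$ \dots to exclude identical vanishing'') is therefore vacuous, and the countable exceptional set you hope to produce is never actually constructed. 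The paper avoids this entirely: rather than trying to show $S_\infty(a)\ne0$ for a.e.\ $a$ by a soft argument, it cites the quantitative computation of Misiurewicz and Visinescu (reproduced as Proposition~\ref{p.mv}), which gives a \emph{uniform} exponential lower bound $|\partial_\al T_{\al(a),\be(a)}^j(0)|,\,|\partial_\be T_{\al(a),\be(a)}^j(0)|\ge\kappa\,\be_0^{[(j-3)/m_1]}$ together with the sign coincidence~\eref{eq.mv2}, valid at every $a$ for which the derivative exists. Combined with $\al'(a),\be'(a)\ge0$, not both zero, this yields a single $j_0$ for which \eref{eq.tentto} holds at all but finitely many parameters, so no case distinction between periodic and aperiodic combinatorics is needed. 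That quantitative estimate is the actual content of the proof, and your proposal gives no substitute for it.
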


For other points than the turning point, i.e., given a $C^1$ function $Y:[0,1]\to\real$ (such that $Y(a)\in[T_a(1),1]$), 
it is sufficient to check condition~(I) in order to obtain a.s. typicality for $Y$.

\begin{Cor}
\label{c.tent1}
If the one-parameter family $T_a$, $a\in[0,1]$, with the associated map $a\mapsto Y(a)$ 
satisfies condition (I), then $Y(a)$ is typical for $\mu_a$, for a.e. $a\in[0,1]$.
\end{Cor}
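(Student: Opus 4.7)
The plan is to apply Theorem~\ref{t.main} directly to the family $T_a$, $a\in[0,1]$, together with the map $Y$, simply borrowing the verification of conditions~(II) and (III) already performed in the proof of Theorem~\ref{t.tent}. The hypothesis of Corollary~\ref{c.tent1} supplies condition~(I) for $Y$ for free, so the only thing to check is that the remaining two conditions, established while proving Theorem~\ref{t.tent}, transfer verbatim to the present setting.

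The key point is that conditions~(II) and (III) refer only to the family $T_a$ and its a.c.i.p.'s $\mu_a$; the auxiliary map $X$ in the statement of condition~(I) plays no role in them. Indeed, inspection of the verification of (III) in Section~\ref{ss.cdreiuni} (notably the construction of the maps $\S_{a_1,a_2,j}$ via Lemma~\ref{l.helptwo}) shows that it is formulated purely in terms of the phase-space dynamics of $T_a$ and the position of the turning point $0$, with no reference to the choice $X(a)=T_a^{j_0}(0)$ made when proving Theorem~\ref{t.tent}; the same remark applies to the verification of (II). Consequently, the proof of Theorem~\ref{t.tent} produces, after discarding an at most countable exceptional set of parameters, a covering of $[0,1]$ by closed subintervals $\I$ on each of which the family $T_a$, $a\in\I$, satisfies both conditions~(II) and (III), with constants depending on $\I$.

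First I would fix such a subinterval $\I$ from this countable cover. By the standing hypothesis, condition~(I) holds for the pair $(T_a,Y)$ on $\I$, and the previous paragraph supplies (II) and (III). Theorem~\ref{t.main} then yields that $Y(a)$ is typical for $\mu_a$ for a.e.\ $a\in\I$. Taking the countable union of these full-measure subsets over the cover gives the conclusion for a.e.\ $a\in[0,1]$. The only delicate point to double-check, and the one I would verify most carefully, is precisely the claim emphasized above: that the arguments of Section~\ref{ss.cdreiuni} apply to any family in the skew tent class, independently of which map is used as $X$; everything else is a routine application of results already proved.
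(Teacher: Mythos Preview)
Your approach is correct and is exactly how the paper derives the corollary (it is presented as an immediate consequence of Corollary~\ref{c.main2}, which rests on precisely the observation you make: conditions~(II) and (III) concern only the family $T_a$, so once verified they combine with condition~(I) for any map). One clarification on the point you flagged for careful checking: the verification of (III) in Section~\ref{ss.cdreiuni} does in fact use the specific choice $X(a)=T_a^{j_0}(0)$ --- condition~(I) for this $X$ is what produces the sign property \eqref{eq.signchange}, hence properties a) and b) feeding into Lemma~\ref{l.helptwo} --- but this is harmless, since what matters is that the \emph{statement} of condition~(III) involves only the family $T_a$, so once it is established (by whatever auxiliary tools) it may be paired with condition~(I) for any map $Y$.
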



The following comments show that we can restrict ourself to non-renormalizable skew tent maps. If 
\begin{equation}
\label{eq.unirenorm}
\al\le\be/(\be^2-1),
\end{equation} 
then $T_{\al,\be}$ is renormalizable, see, e.g., \cite{mv}. More precisely if \eref{eq.unirenorm} holds, then 
$T_{\al,\be}^2(1)$ is greater or equal than the unique fixed point in $(0,1)$ and $T_{\al,\be}^2$ restricted either 
to the interval $[T_{\al,\be}(1),T_{\al,\be}^3(1)]$ or to the interval $[T_{\al,\be}^2(1),1]$ is affinely conjugated 
to $T_{\be^2,\al\be}$ restricted to the interval $[T_{\be^2,\al\be}(1),1]$. Observe that the new slopes 
$\al'=\be^2$ and $-\be'=-\al\be$ 
still satisfy $\al',\be'>1$ and $(\al')^{-1}+(\be')^{-1}\ge1$ (the latter inequality follows by \eref{eq.unirenorm}). 
Since the function $\be\mapsto\be/(\be^2-1)$ is decreasing for $\be>1$, 
we have that if $T_0$ is not renormalizable then $T_a$ is also not renormalizable for $a\in[0,1]$. 
Now, assume for the moment that $T_a$ is renormalizable for each $a\in[0,1]$ and consider the one-parameter 
family defined by $\tilde{T}_a=T_{\be(a)^2,\al(a)\be(a)}$. Note that if we show typicality of the turning point for the 
family $\tilde{T}_a$, for a.e. $a\in[0,1]$, this implies a.s. typicality of the turning point 
for the original family $T_a$. Furthermore, if condition~(I) is satisfied for the family $T_a$ together with a map $Y$ as in 
Corollary~\ref{c.tent1}, 
then it is easy to check that condition~(I) is also satisfied for the family $\tilde{T}_a$ together 
with the map $\tilde{Y}$ which is the map for the conjugated system corresponding to $Y$.
Since the $a$-derivative of $\al(a)\be(a)$ is positive and 
the $a$-derivative of $\be(a)^2$ is non-negative, the new one-parameter family $\tilde{T}_a$ 
fits into the family of skew tent maps described in the beginning of this section. 
Furthermore, it is known that for each $a\in[0,1]$, $T_a$ is 
at most a finite number of times renormalizable where this number is bounded above by a constant only 
dependent on $(\al_0,\be_0)$ and not on the parameter $a$ (this can easily be derived by looking, e.g., at the 
topological entropy of $T_a$, see \cite{mv} page 137). 
Altogether, we derive that in order to prove Theorem~\ref{t.tent} (and therewith also Corollary~\ref{c.tent1}) we can 
without loss of generality restrict ourself to 
the case when $T_a$, $a\in[0,1]$, is not renormalizable, i.e., we assume that 
\begin{equation}
\label{eq.nonrenorm}
\al_0>\be_0/(\be_0^2-1).
\end{equation}

Observe that it is only possible for the parameter 
$a=1$ to satisfy the equality $\al(a)^{-1}+\be(a)^{-1}=1$. Thus, since we are only interested 
in Lebesgue almost every parameter we can neglect skew tent maps whose slopes satisfy $\al^{-1}+\be^{-1}=1$, i.e., 
we assume that
\begin{equation}
\label{eq.notlog2}
\al_1^{-1}+\be_1^{-1}>1.
\end{equation}

\subsection{Proof of Theorem~\ref{t.tent}}
\label{ss.eins}
Since we can restrict ourself to skew tent maps which are non-renormalizable, this immediately implies that these maps are 
mixing. Hence, we can apply Theorem~\ref{t.main1} and in order to prove Theorem~\ref{t.tent} 
it is sufficient to show that there exists an iteration $j_0\ge3$ such that 
\begin{equation}
\label{eq.tentto}
|D_aT_a^{j_0}(0)|>\frac{\sup_{a\in[0,1]}\sup_{x\in[T_a(1),1]}\left|\partial_aT_a(x)\right|}{\la-1},
\end{equation}
for almost all $a\in[0,1]$. The main computation needed for the verification of \eref{eq.tentto} is already done in a paper 
by Misiurewicz and Visinescu \cite{mv} (see Lemma~3.3 and~3.4 therein; and Proposition~\ref{p.mv} below), 
where they show monotonicity of the 
kneading sequence for skew tent maps. (A reader not familiar with the basic notions and facts of kneading theory 
can find them in \cite{ce}.) The main result in \cite{mv} (see Theorem~A therein) is the following.

\begin{Thm}
\label{t.monotonicity}
Let $(\al,\be)$ and $(\al',\be')$ be in the set $\{(\al,\be)\ ;\ \al,\be>1\ \text{and}\ \al^{-1}+\be^{-1}\ge1\}$. 
If $(\al',\be')>(\al,\be)$ then the kneading sequence of $T_{\al',\be'}$ is strictly greater than the 
kneading sequence of $T_{\al,\be}$.
\end{Thm}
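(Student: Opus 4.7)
The plan is to reduce to $C^1$ one-parameter monotone deformations in the slope plane: any $(\al',\be')>(\al,\be)$ can be joined to $(\al,\be)$ by a path $t\mapsto(\al(t),\be(t))$, $t\in[0,1]$, with $\dot\al(t),\dot\be(t)\ge 0$ and $\dot\al(t)+\dot\be(t)>0$ throughout. Writing $x_n(t)=T_{\al(t),\be(t)}^n(1)$, the kneading sequence $K(T_t)=K_0K_1\ldots$ is just the sign sequence of $(x_n(t))$, with $K_n\in\{L,R\}$ according as $x_n(t)\le 0$ or $x_n(t)>0$, and is locally constant in $t$ except at the discrete parameter set where some $x_n$ vanishes. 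It therefore suffices to show that at every vanishing event the sequence flips \emph{upward} in the twisted kneading order, and that at least one such event occurs along the path.

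The core of the proof is an inductive sign estimate. Define
$$
\epsilon_n(t)=\prod_{i=0}^{n-1}\sigma(x_i(t)),\qquad \sigma(y)=+1\ \text{if}\ y\le 0,\ \sigma(y)=-1\ \text{if}\ y>0,
$$
so that $\epsilon_n(t)$ is exactly the parity of the $R$-symbols in $K_0\ldots K_{n-1}$, which is what governs the twisted kneading order at position $n$. I would prove
$$
\epsilon_n(t)\,\dot x_n(t)\ge 0
$$
by induction on $n$, at every $t$ for which $x_0(t),\ldots,x_{n-1}(t)\ne 0$. The base case is direct ($\dot x_1=-\dot\be(t)\le 0$ paired with $\epsilon_1=-1$). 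For the inductive step, split on $\sigma(x_n)$: on the orientation-reversing branch ($x_n>0$, $x_{n+1}=1-\be x_n$) the chain rule gives a factor $-\be$ in $\dot x_{n+1}$ while $\epsilon_{n+1}=-\epsilon_n$, so the two sign reversals cancel and the inductive hypothesis is transported; on the orientation-preserving branch ($x_n\le 0$, $x_{n+1}=1+\al x_n$) the derivative picks up an extra summand $\dot\al(t)\,x_n$, whose contribution must be controlled. That last point reduces to an auxiliary parallel induction showing that $\epsilon_n(t)\,x_n(t)\ge 0$ \emph{whenever} $x_n(t)\le 0$ (equivalently, $\epsilon_n=-1$ whenever $K_n=L$); this is the combinatorial point where the constraint $\al^{-1}+\be^{-1}\ge 1$, i.e.\ $\al\be\ge\al+\be$, is indispensable, as it is what forbids the symbol patterns (for instance $RLRL\ldots$) that would otherwise break the sign relation.

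Once $\epsilon_n(t)\,\dot x_n(t)\ge 0$ is established, at each $t_0$ at which $x_n(t_0)=0$ transversally the symbol $K_n$ flips in precisely the direction that is upward in the twisted kneading order, so $K(T_t)$ is non-decreasing along the path. Non-transversal and simultaneous vanishings form an at most countable exceptional set along $[0,1]$ and are handled by one-sided continuity of the kneading invariant $t\mapsto K(T_t)$. Strictness over the entire path comes from the observation that there must occur at least one transversal vanishing: otherwise all itineraries $(K_n)$ would be constant in $t$, and back-solving the piecewise linear recursion $x_{n+1}=1\pm(\al\text{ or }\be)\,x_n$ along this fixed itinerary would force $(\al(0),\be(0))=(\al(1),\be(1))$, contradicting $\dot\al+\dot\be>0$. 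The main obstacle I anticipate is the auxiliary sign lemma $\epsilon_n x_n\ge 0$ on the $L$-branch: it is the combinatorial heart of the argument, the only place where the region $\{\al^{-1}+\be^{-1}\ge 1\}$ is genuinely used, and it is what ultimately makes the parity-weighted derivative argument close up cleanly.
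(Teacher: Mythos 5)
The paper does not prove Theorem~\ref{t.monotonicity}; it simply cites it as Theorem~A of Misiurewicz--Visinescu \cite{mv}, whose proof proceeds through quantitative kneading-theoretic estimates on $\partial_\al T^j_{\al,\be}(0)$ and $\partial_\be T^j_{\al,\be}(0)$ (their Lemmas~3.3--3.4, quoted in the present paper as Proposition~\ref{p.mv}). Your proposal is therefore a genuinely different, self-contained route; unfortunately it has a gap at the heart of the induction.

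The problem is on the orientation-reversing branch. Writing $T(x;t)=1-\be(t)x$ for $x>0$, the $t$-derivative is
$$
\dot x_{n+1}=-\be\,\dot x_n-\dot\be\,x_n,
$$
so there \emph{is} an extra summand $-\dot\be\,x_n$, entirely analogous to the one you flag on the $L$-branch; you cannot claim this case is handled by mere sign cancellation. Multiplying by $\epsilon_{n+1}=-\epsilon_n$ gives
$$
\epsilon_{n+1}\dot x_{n+1}=\be\,\epsilon_n\dot x_n+\dot\be\,\epsilon_n x_n,
$$
so to close the induction you now need $\epsilon_n x_n\ge 0$ also in the regime $x_n>0$, i.e.\ $\epsilon_n=+1$ whenever $K_n=R$. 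Your auxiliary lemma is stated only for $x_n\le 0$, and the $R$-side version is simply \emph{false}: take $\al=2$, $\be=1.2$ (so $\al^{-1}+\be^{-1}=4/3\ge 1$). Then $x_0=1$, $x_1=-0.2$, $x_2=0.6>0$, and $\epsilon_2=\sigma(x_0)\sigma(x_1)=(-1)(+1)=-1$, so $\epsilon_2 x_2<0$. More dramatically, $\al=\be=\sqrt2$ gives $x_2=\sqrt2-1>0$ with $\epsilon_2=-1$ already at $n=2$. In these examples the ``bad'' term $\dot\be\,\epsilon_n x_n$ is genuinely negative, and the target inequality $\epsilon_{n+1}\dot x_{n+1}\ge 0$ only survives because the ``good'' term $\be\,\epsilon_n\dot x_n$ happens to dominate it in magnitude. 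Establishing that dominance is exactly what the quantitative lower bounds of \cite{mv} on $|\partial_\al T^j(0)|$ and $|\partial_\be T^j(0)|$ are for; a purely combinatorial parity argument of the type you propose does not close.

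So the plan is stuck at two points: (a) the $R$-branch derivative recursion is mis-stated (the $-\dot\be\,x_n$ term is dropped), and (b) the sign lemma that would be needed to absorb that term fails. Repairing this requires replacing your parallel ``$\epsilon_n x_n\ge 0$'' induction by a genuinely quantitative estimate of the form $|\epsilon_n\dot x_n|\gtrsim\text{(geometric growth)}$, which is essentially what \cite{mv} do. I'd suggest either carrying along such a lower bound in the induction, or simply deferring to the Misiurewicz--Visinescu result as the paper does.

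Two smaller remarks. First, the exceptional-set discussion at the end (non-transversal or simultaneous vanishings) is not merely a countability issue: without a priori regularity of the monotone limits you cannot invoke ``one-sided continuity of the kneading invariant'' so freely, and this should be argued, e.g.\ by showing that on any interval free of transversal vanishings the whole itinerary of $1$ is locally constant. Second, your strictness argument (no transversal vanishing forces a constant path) is sound in spirit but should be turned into an actual computation: a constant itinerary makes $x_n(t)$ an affine function of $(\al(t),\be(t))$, and comparing $x_1$ and $x_2$ already yields $\be$ and $\al$ back.
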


Since the derivatives of $\al(a)$ and $\be(a)$ are non-negative and at least one of them is positive, we obtain 
strict monotonicity of the kneading sequence for our family $T_a$, $a\in[0,1]$.
Let $j\ge1$. Note that if $T_a^i(0)=0$ for some $1\le i<j$, then the kneading sequence of $T_a$ ends with $C$ and has length 
smaller than $j$. By the strict monotonicity of the kneading sequence, $T_a$ can have 
such a kneading sequence only for finitely many parameter values $a\in[0,1]$. Hence, for each $j\ge1$, 
there are only a finite number of $a$ values such that $T_a^j(0)$ is not differentiable in $a$.
To establish \eref{eq.tentto} we will use some derivative estimates 
given in \cite{mv}. To this end we will look at the kneading sequences of the $T_a$'s. 
Every kneading sequence of a map $T_a$ in our family starts with $RL$ and is smaller or equal than the 
sequence $RL^{\infty}$. In fact, by \eref{eq.notlog2} and by the monotonicity of the kneading 
sequence, the kneading sequence of $T_1(=T_{\al_1,\be_1})$ is strictly smaller than $RL^{\infty}$. 
Let $1\le m_1<\infty$ be the integer such that the kneading sequence of $T_1$ starts with 
$RL^{m_1}R$ or is equal to $RL^{m_1}C$. From \cite{mv} we derive the following result.

\begin{Prop}
\label{p.mv}
There exists a constant $\ka>0$ such that for all $a\in[0,1]$ for which $T_a^j(0)$, $j\ge3$, is differentiable in $a$ we have
\begin{equation}
\label{eq.mv1}
\left|\partial_{\al}T_{\al(a),\be(a)}^{j}(0)\right|,\ 
\left|\partial_{\be}T_{\al(a),\be(a)}^{j}(0)\right|\ge\ka\be_0^{\left[\frac{j-3}{m_1}\right]},
\end{equation}
and, furthermore,
\begin{equation}
\label{eq.mv2}
\sign(\partial_{\al}T_{\al(a),\be(a)}^{j}(0))=\sign(\partial_{\be}T_{\al(a),\be(a)}^{j}(0))
=\sign(T_a^{j-1}\,'(1)).
\end{equation}
\end{Prop}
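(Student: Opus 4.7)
The plan is to derive Proposition~\ref{p.mv} by combining the explicit derivative estimates for skew tent maps from Lemmas~3.3 and~3.4 of \cite{mv} with the kneading-sequence monotonicity in Theorem~\ref{t.monotonicity}. First I would apply the chain rule to write
\[
\partial_\alpha T^j_{\alpha,\beta}(0)=\sum_{i=1}^{j}T_{\alpha,\beta}^{j-i}{}'\bigl(T_{\alpha,\beta}^i(0)\bigr)\,\partial_\alpha T_{\alpha,\beta}\bigl(T_{\alpha,\beta}^{i-1}(0)\bigr),
\]
and analogously for $\partial_\beta$. Since $\partial_\alpha T_{\alpha,\beta}(x)=x$ when $x\le0$ and $0$ otherwise, while $\partial_\beta T_{\alpha,\beta}(x)=-x$ when $x>0$ and $0$ otherwise, only those indices $i$ for which $T_{\alpha,\beta}^{i-1}(0)$ lies in the corresponding branch produce a nonvanishing contribution. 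The essential input extracted from \cite{mv} is the \emph{sign-coherence} of the resulting sums: all nonzero summands share a common sign, which is precisely $\sign(T_{\alpha,\beta}^{j-1}{}'(1))$. This rules out cancellations and is the combinatorial content of Lemmas~3.3 and~3.4 of \cite{mv}, proved there by an induction tracking the relative positions of $T^i_{\alpha,\beta}(0)$ along the orbit of the turning point.

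Given sign-coherence, any single nonzero summand yields a lower bound on the absolute value of the derivative. Selecting the summand whose tail product $\prod_{l=i}^{j-1}T'_{\alpha,\beta}(T_{\alpha,\beta}^l(0))$ collects as many $-\beta$ factors as possible, one obtains $|\partial_\alpha T^j_{\alpha,\beta}(0)|\ge c\,\beta^{N_R}$, where $N_R$ counts the number of $R$'s in the first $j-1$ symbols of the kneading sequence $K(T_{\alpha,\beta})$; the same $N_R$ bounds $|\partial_\beta T^j_{\alpha,\beta}(0)|$ from below.

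The second step is a uniform lower bound on $N_R$ for $a\in[0,1]$. By Theorem~\ref{t.monotonicity} and the strict monotonicity of $(\alpha(a),\beta(a))$, we have $K(T_a)\le K(T_1)$ in the parity-lexicographic order, and $K(T_1)$ begins with $RL^{m_1}R$ or equals $RL^{m_1}C$. I claim that no block of consecutive $L$'s in $K(T_a)$ has length exceeding $m_1$: otherwise an appropriate shift $\sigma^iK(T_a)$ would start with $RL^{m_1+1}\ldots$, and since the prefix $RL^{m_1}$ contains an odd number of $R$'s the parity ordering reverses at the next symbol, giving $\sigma^iK(T_a)>K(T_1)$ and contradicting admissibility $\sigma^iK(T_a)\le K(T_a)\le K(T_1)$. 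Consequently the first $j-1$ symbols of $K(T_a)$ contain at least $[(j-3)/m_1]$ occurrences of $R$, each contributing a factor of at least $\beta(a)\ge\beta_0$; this yields the estimate \eref{eq.mv1} with $\kappa=\kappa(\alpha_0,\beta_0,m_1)>0$. The sign identity \eref{eq.mv2} is immediate from sign-coherence, since $\sign(T_a^{j-1}{}'(1))=(-1)^{N_R}$.

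The main obstacle will be the sign-coherence assertion, which is the combinatorial heart of Lemmas~3.3 and~3.4 of \cite{mv}; once it is granted, both the quantitative lower bound and the sign identification reduce to counting symbols in $K(T_a)$, which is controlled uniformly in $a$ by the monotonicity theorem.
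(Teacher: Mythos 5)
Your proposal attempts to reconstruct the content of Lemmas~3.3 and~3.4 of \cite{mv} from scratch, whereas the paper's proof simply cites those two lemmas after translating notation ($x_j$ in \cite{mv} corresponds to $T_a^{j+1}(0)$ here) and noting that Lemma~3.4 of \cite{mv} is stated only for $j-3\ge m$, so a small separate argument is needed for $3\le j<m+3$. Re-deriving the estimate is legitimate in principle, but your central technical claim --- ``sign-coherence,'' i.e.\ that every nonzero summand in the chain-rule expansion of $\partial_\alpha T^j_{\alpha,\beta}(0)$ has the same sign --- is false, and the rest of your argument collapses without it.

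Here is a counterexample. Take a skew tent map whose kneading sequence begins $RLLRLLR\ldots$ (such maps exist, e.g.\ just past the period-three locus $\alpha(\beta-1)=1$), and take $j=8$. In your decomposition
\[
\partial_\alpha T^8_{\alpha,\beta}(0)=\sum_{i=1}^{8}\Bigl[\,\prod_{l=i}^{7}T'_{\alpha,\beta}\bigl(T^l_{\alpha,\beta}(0)\bigr)\Bigr]\,\partial_\alpha T_{\alpha,\beta}\bigl(T^{i-1}_{\alpha,\beta}(0)\bigr),
\]
the nonzero summands are at $i\in\{3,4,6,7\}$. For $i=3$ the tail product over $l=3,\dots,7$ has sign pattern $(+)(-)(+)(+)(-)=+$, and multiplying by $\partial_\alpha T(T^2(0))=1-\beta<0$ gives a \emph{negative} summand. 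For $i=6$ the tail product over $l=6,7$ has sign $(+)(-)=-$, and multiplying by $T^5(0)<0$ gives a \emph{positive} summand. So the summands at $i=3$ and $i=6$ have opposite signs, and once cancellation is possible, ``selecting the summand with the most $-\beta$ factors'' does not lower-bound $|\partial_\alpha T^j(0)|$. Proposition~\ref{p.mv} is still true in this example --- the $i=3,4$ terms dominate because they carry more factors of $\alpha$ and $\beta$, and the total has sign $\sign(T^{7}\,'(1))=-1$ --- but establishing that requires exactly the kind of recursive, dominant-term bookkeeping carried out in \cite{mv}, not a term-by-term sign argument.

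The kneading-theoretic parts of your proposal are sound: the admissibility argument showing that $K(T_a)\le K(T_1)$ forces every maximal block of $L$'s in $K(T_a)$ to have length at most $m_1$, and the resulting lower bound of $[(j-3)/m_1]$ on the number of $R$'s among the first $j-1$ symbols, are both correct. What is missing is a cancellation-free mechanism to convert that symbol count into a derivative lower bound; that mechanism is precisely what \cite{mv}~Lemmas~3.3--3.4 supply, and what the paper imports directly.
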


\begin{proof}
The proof of Proposition~\ref{p.mv} follows from Lemma~3.3 and~3.4 in \cite{mv}. For this note that for 
each $a\in[0,1]$, the integer $m\ge1$ such that the kneading sequence of $T_a$ starts with 
$RL^mR$ or is equal to $RL^mC$ is smaller or equal than $m_1$. 
Observe also that $x_j$ in \cite{mv} corresponds to $T_a^{j+1}(0)$ in our setting. 
Actually, Lemma~3.4 in \cite{mv} is only formulated for the case when $j-3\ge m$. But considering Lemma~3.4 i) 
in \cite{mv} it is easy to deduce that Proposition~\ref{p.mv} also holds when $3\le j<m+3$.
\end{proof}

For $j\ge3$, we have 
\begin{equation}
\label{eq.mv3}
D_aT_a^j(0)=\al'(a)\partial_{\al}T_{\al(a),\be(a)}^j(0)+\be'(a)\partial_{\be}T_{\al(a),\be(a)}^j(0),
\end{equation}
for all $a$ for which $a\mapsto T_a^j(0)$ is differentiable.
Since $\al'(a),\be'(a)\ge0$ and at least one of the derivatives $\al'(a)$ and $\be'(a)$ is uniformly bounded away from $0$, 
by ~\eref{eq.mv1} and ~\eref{eq.mv2}, $\inf_{a\in[0,1]}|D_aT_a^j(0)|$ is growing exponentially in $j$. 
Thus, we can fix an integer $j_0\ge3$ such that \eref{eq.tentto} is satisfied which concludes the proof of Theorem~\ref{t.tent}.


\section{Markov partition preserving one-parameter families}
\label{s.markov}

Assume that we have a one-parameter family $T_a:[0,1]\to[0,1]$, $a\in\I$, as described 
in Section~\ref{ss.prel} with a partition $0\equiv b_0(a)<b_1(a)<...<b_{p_0}(a)\equiv1$ and 
satisfying properties (i)-(iii). We require additionally that the family 
$T_a$ fulfills the following Markov property. Set $B_k(a)=(b_{k-1}(a),b_k(a))$, $1\le k\le p_0$.

\begin{itemize}
\item[(M)]
For each $1\le k\le p_0$ the image $T_a(B_k(a))$, $a\in\I$, is a union of monotonicity intervals $B_l(a)$, $1\le l\le p_0$ 
(modulo a finite number of points).
\end{itemize}

\begin{Thm}
\label{t.markov}
If the one-parameter family $T_a$, $a\in\I$, satisfies the Markov property (M) and if for a $C^1$ map 
$X:\I\to[0,1]$ condition~(I) is fulfilled, then $X(a)$ is typical for $\mu_a$, for a.e. $a\in\I$.
\end{Thm}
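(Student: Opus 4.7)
The plan is to deduce Theorem~\ref{t.markov} from Theorem~\ref{t.main}: condition~(I) is part of the hypothesis, so the work is to verify conditions~(II) and (III). As in the treatment of $\beta$-transformations and unimodal maps, I would first, using Lemma~\ref{l.app1} together with properties (i)--(iii), cover $\I$ (up to countably many points) by closed subintervals on which the quantitative data (density bounds, Markov combinatorics) is uniform, and verify (II)--(III) on each such subinterval separately. In the sequel I assume we have fixed such a subinterval, still denoted $\I$.

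Condition~(III) is essentially forced by the Markov property (M). For $j\ge 1$ and any $\om\in\P_j(a)$, a straightforward induction on $j$ using (M) shows that the image $T_a^j(\om)$ coincides, modulo finitely many points, with a union of full monotonicity intervals $B_l(a)$, and this union is determined entirely by the symbolic history $(\ind_a(T_a^i(\om)))_{0\le i<j}$. Consequently, given $a_1\le a_2$ in $\I$ and $\om\in\P_j(a_1)$ there is a unique $\om'\in\P_j(a_2)$ with the same symbolic history, and we set $\S_{a_1,a_2,j}(\om):=\om'$. Property~\eref{eq.drei1} holds by construction. For \eref{eq.dreidist} and \eref{eq.drei2}, write
\[
T_{a_1}^j(\om)=\bigcup_{l\in S}B_l(a_1),\qquad T_{a_2}^j(\om')=\bigcup_{l\in S}B_l(a_2),
\]
with the \emph{same} index set $S\subset\{1,\ldots,p_0\}$. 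By (i) the boundary points $b_k(a)$ are $\Lip(L)$ in $a$, so the endpoints of $T_{a_1}^j(\om)$ and $T_{a_2}^j(\om')$ differ by at most $L|a_1-a_2|$, giving \eref{eq.dreidist} with $C_2=L$. For \eref{eq.drei2}, since each $|B_l(a)|\ge \delta_0>0$ and $a\mapsto |B_l(a)|$ is $\Lip(2L)$, the ratio $|B_l(a_1)|/|B_l(a_2)|$ is bounded uniformly by some constant depending only on $\delta_0$, $L$ and $|\I|$; summing over $l\in S$ yields \eref{eq.drei2}.

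Condition~(II) I would verify exactly as in Section~\ref{ss.zweib}. By Lemma~\ref{l.app1} we may assume a uniform upper bound $\varphi_a\le C$ and a lower bound $\varphi_a\ge C^{-1}$ on some interval $J(a)\subset K(a)$ of length $\ge C^{-1}$, uniformly in $a\in\I$. The expansion combined with the Markov structure (M) makes $T_a:K(a)\to K(a)$ exact (cf.~\cite{wagner}), so iterating $J(a)$ eventually covers all of $K(a)$; because the Markov combinatorics are constant on $\I$ and the partition points depend continuously on $a$, one can choose a single integer $j$ (independent of $a\in\I$) with $T_a^j(J(a))\supset K(a)$ for every $a\in\I$. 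Inserting this into the Perron--Frobenius identity exactly as in \eref{eq.perron22} gives a lower bound $\varphi_a\ge C^{-1}\Lambda^{-j}$ on $K(a)$, uniformly in $a$, which together with the given upper bound yields (II).

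With (I)--(III) in hand, Theorem~\ref{t.main} delivers the conclusion. The main technical obstacle is not the combinatorial statement (III), which is nearly automatic from (M), but rather the uniformity over $a$ in condition~(II)---specifically producing a \emph{single} iteration count $j$ that works for every parameter in the subinterval. This is where the Markov property is really used a second time: the constancy of the combinatorial type on each piece of the cover of $\I$ is what forces the iterate at which $J(a)$ saturates $K(a)$ to be locally constant in $a$.
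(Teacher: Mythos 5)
Your proposal is correct and follows essentially the same route as the paper: condition~(III) falls out of the Markov property~(M) with explicit constants $C_2$, and condition~(II) is obtained from Lemma~\ref{l.app1} plus iterating the interval $J(a)$ until it saturates $K(a)$, finally invoking Theorem~\ref{t.main}. The only slight divergence is that you appeal to exactness via \cite{wagner}, where the paper argues more directly from ergodicity that some iterate of a full Markov cell $B_l(a)$ has full measure in $K(a)$, but the underlying mechanism (constancy of the Markov combinatorics on each piece of the cover giving a uniform iterate count $j$) is the same.
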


\begin{Exa}
\label{ex.markov}
Let 
$$
\T_a(x)=\left\{\begin{array}{ll}
              \frac{x}{a}\quad&\text{if}\ x<a,\\
              \frac{x-a}{1-a}\quad&\text{otherwise},
              \end{array}\right.
$$
and $g:[0,1]\to[0,1]$ a $C^{1,1}(L)$ homeomorphism such that $\inf_xg'(x)>0$ and such that the set
$$
\I=\{a\in(0,1)\ ;\ \inf_x \T_a'(g(x))g'(x)>1\}
$$
is non-empty. Clearly, $\I$ is an (open) interval. We define the one-parameter family $T_a:[0,1]\to[0,1]$ as
$$
T_a(x)=\T_a(g(x)),\qquad a\in\I.
$$
By \cite{wong}, since $T_a$ has only one point of discontinuity, there exists a unique a.c.i.p. $\mu_a$. 
From the verification of condition~(II) in the proof of Theorem~\ref{t.markov}, it will follow that $\supp(\mu_a)=[0,1]$. 

\begin{Prop}
\label{p.markov}
If $Y:\I\to(0,1)$ is a $C^1$ map such that $Y'(a)\le0$, then
$Y(a)$ is typical for $\mu_a$, for a.e. parameter $a\in\I$.
\end{Prop}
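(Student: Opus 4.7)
The plan is to apply Theorem~\ref{t.markov}, which reduces the statement to verifying the Markov property~(M) together with condition~(I) for some $C^1$ map $X$. Property (M) is immediate: since $g$ is a homeomorphism with $g(0)=0$ and $g(1)=1$, both branches $T_a|_{(0,g^{-1}(a))}$ and $T_a|_{(g^{-1}(a),1)}$ map onto $(0,1)$, so each $T_a(B_k(a))$ equals $(0,1)$ modulo finite sets. In particular $K(a)=[0,1]$, making the requirement $y_{j_0}(a)\in K(a)$ of Lemma~\ref{l.startcalc} automatic; properties (i)-(iii) of Section~\ref{ss.prel} follow on any closed subinterval $\tilde\I\subset\I$ from the $C^{1,1}(L)$-assumption on $g$ together with $\inf g'>0$. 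I will verify condition~(I) by applying Lemma~\ref{l.startcalc} directly to $Y$, with $j_0=j_0(\tilde\I)$ to be chosen.

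The crucial sign analysis is that on the left branch $\partial_aT_a(x)=-g(x)/a^2\le0$ and on the right branch $\partial_aT_a(x)=(g(x)-1)/(1-a)^2\le0$, with equality only at $x=0$ and $x=1$ respectively. Since $T_a'(x)>0$ and $Y'(a)\le0$ by hypothesis, the recursion $y_j'(a)=T_a'(y_{j-1}(a))y_{j-1}'(a)+\partial_aT_a(y_{j-1}(a))$ combines non-positive quantities, so $y_j'(a)\le0$ inductively wherever defined. Unfolding the recursion and retaining only the $i=1$ summand gives
\begin{equation*}
|y_j'(a)|\ge\Bigl(\prod_{k=1}^{j-1}T_a'(y_k(a))\Bigr)|\partial_aT_a(Y(a))|\ge\la^{j-1}|\partial_aT_a(Y(a))|.
\end{equation*}

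The factor $|\partial_aT_a(Y(a))|$ vanishes only when $Y(a)\in\{0,1\}$, excluded by hypothesis, or when $Y(a)=g^{-1}(a)$; since $Y'\le0$ while $(g^{-1})'>0$ on $\I$, the function $a\mapsto Y(a)-g^{-1}(a)$ is strictly decreasing and therefore has at most one zero $a_0$. Disregarding $a_0$, I cover $\I$ by countably many closed subintervals $\tilde\I$ on each of which $|\partial_aT_a(Y(a))|\ge\delta(\tilde\I)>0$. Since $|\partial_aT_a(x)|$ is uniformly bounded on $\tilde\I\times[0,1]$, the right-hand side of~\eref{eq.startcomp} is finite; choosing $j_0$ so large that $\la^{j_0-1}\delta$ exceeds it secures the required lower bound on $|y_{j_0}'|$.

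It remains to check that $y_{j_0}$ is piecewise $C^1$ with finitely many pieces on $\tilde\I$: by induction on $j$, the new breakpoints of $y_{j+1}$ arise from solutions of $y_j(a)=g^{-1}(a)$, and since $y_j'\le0$ on each existing piece while $(g^{-1})'>0$, at most one such solution appears per piece, so the count at step $j_0$ is bounded by $2^{j_0}$. Lemma~\ref{l.startcalc} then delivers condition~(I) for $X(a)=T_a^{j_0}(Y(a))$, and Theorem~\ref{t.markov} yields almost sure typicality of $X$; since a finite forward iteration does not alter Birkhoff averages, $Y(a)$ is itself typical for $\mu_a$ for a.e. $a\in\tilde\I$, hence for a.e. $a\in\I$. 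The main work lies in the sign-monotonicity argument and the breakpoint bookkeeping, both of which hinge on $Y'\le0$ and would break down under $Y'>0$, as illustrated by the periodic counterexample $Y(a)=p_a$ mentioned after the proposition.
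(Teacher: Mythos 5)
Your proof is correct and follows essentially the same route as the paper: reduce to Theorem~\ref{t.markov}, verify condition~(I) via Lemma~\ref{l.startcalc}, and exploit the non-positivity of both $Y'$ and $\partial_aT_a$ to get the exponential lower bound on $|y_j'|$ from the $i=1$ summand and the finite count of breakpoints by pairing decreasing $y_j$ against the increasing discontinuity curve $a\mapsto g^{-1}(a)$. (Incidentally, your right-branch formula $\partial_aT_a(x)=(g(x)-1)/(1-a)^2$ is the correct one; the paper's displayed $-\tfrac{1+g(x)-2a}{(1-a)^2}$ is a small algebra slip with the same sign, so its argument is unaffected.)
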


\begin{proof}
Le $\tilde{\I}\subset\I$ be a closed interval. 
It follows that there exist constants $1<\la\le\La<\infty$ which are uniform lower and upper 
bounds for the expansion in the family $T_a$, $a\in\tilde{\I}$ (cf. \eref{eq.la}).
Since the map $g$ is $C^{1,1}(L)$ it follows that there exists $\tilde{L}\ge L$ such that 
$T_a$ is piecewise $C^{1,1}(\tilde{L})$ for all $a\in\tilde{\I}$. 
One easily checks that the one-parameter family $T_a$, $a\in\tilde{\I}$, fits into the model described in 
Section~\ref{ss.prel} satisfying properties (i)-(iii). Hence, we can apply 
Theorem~\ref{t.markov} to this family. Clearly, $T_a$ satisfies the Markov property (M). 
In order to show a.s. typicality, it is only left to verify condition (I). 
We will use the criteria in Lemma~\ref{l.startcalc}. We have
$$
\partial_aT_a(x)=\left\{\begin{array}{ll}
              -\frac{g(x)}{a^2}\quad&\text{if}\ g(x)<a,\\
              -\frac{1+g(x)-2a}{(1-a)^2}\quad&\text{otherwise},
              \end{array}\right.
$$
which is non-positive for all $a\in\tilde{\I}$. Since $\tilde{\I}$ is a closed interval, the image of $\tilde{\I}$ by $Y$ has 
positive distance to $0$ and $1$ and, hence, there exists $\kappa>0$ such that $\sup_{a\in\tilde{\I}}\partial_aT_a(Y(a))\le-\kappa$. 
Recall the formula 
\eref{eq.startcalc1} for the derivative of $y_j$ (set $k=0$). All the terms in the right-hand side of \eref{eq.startcalc} are non-positive 
and the term $T_a^{j-1}\,'(y_1(a))\partial_aT_a(Y(a))$ is decreasing faster than $-\kappa\la^{j-1}$. Thus, we find $j_0\ge0$ such that 
\eref{eq.startcalc1} is satisfied for all parameter values $a\in\tilde{\I}$ in which $y_{j_0}$ is differentiable. 
In order to apply Lemma~\ref{l.startcalc},
it is only left to show that the number of 
$a\in\tilde{\I}$ in which $y_{j_0}$ is not differentiable is finite.
Since $Y'(a)\le0$ and since the point of discontinuity $g^{-1}(a)$ of $T_a$ is strictly increasing in $a$, there can only 
be one point in the interval $\tilde{\I}$ in which $y_1$ is not differentiable. 
Since $y_j'(a)\le0$, $j\ge1$, we can repeat this reasoning and
it follows that there 
are only finitely many points in which $y_{j_0}$ is not differentiable. Since we can cover $\I$ by a countable 
number of such intervals $\tilde{\I}$, this concludes the proof of the Proposition~\ref{p.markov}. 
\end{proof}
\end{Exa}

We turn to the proof of Theorem~\ref{t.markov}.
\begin{proof}
In order to proof Theorem~\ref{t.markov}, it is sufficient to verify conditions~(II) and (III). 
We consider first condition~(II). As in the first paragraph in Section~\ref{ss.zweib}, by Lemma~\ref{l.app1}, we can without loss 
of generality assume that there is a constant $C=C(\I)\ge1$ such that
for each $a\in\I$ the density $\varphi_a$ is bounded from above by $C$
and, further, there exists an interval $J(a)$ of length $C^{-1}$ such that $\varphi_a$ restricted 
to $J(a)$ is bounded from below by $C^{-1}$. 
Since for each $a\in\I$ the expansion of $T_a$ is at least 
$\la$, we derive that there is an integer $i\ge1$ independent on $a$ such that the number of elements in 
$\P_i|J(a)$ is greater or equal than $3$. By (M), we derive that the image by $T_a^{i-1}$ of an element $\om\in\P_i|J(a)$, 
which is not adjacent to a boundary point of 
$J(a)$, is a monotonicity interval $B_l(a)$, $1\le l\le p_0$. By our assumption on the 
one-parameter family $T_a$, the measure $\mu_a$ is ergodic. It follows that there is an integer $i'\ge1$ such that 
$|K(a)\setminus T_a^{i'}(B_l(a))|=0$. (Obviously $i'$ can be chosen independently on $a\in\I$ and the monotonicity interval 
$B_l(a)\subset K(a)$.)
Thus, setting $j=i+i'$ for almost every $y\in K(a)$, there exists a point $x\in J(a)$ such 
that $x$ is mapped to $y$ after $j$ iterations, i.e., $T_a^j(x)=y$. Now, inequality~\eref{eq.perron22} provides 
us with a lower bound for the density. Note that from this argument follows 
that $\supp(\mu_a)=[0,1]$ in Example~\ref{ex.markov}.

To verify (III) we observe that, since in this Markov setting 
$K(a)$, $a\in\I$, is the union of monotonicity intervals $B_l(a)$, $1\le l\le p_0$, there exists even a bijection
$$
\S_{a_1,a_2,j}:\P_j(a_1)\to\P_j(a_2),
$$
for all $a_1,a_2\in\I$ and $j\ge1$, satisfying \eref{eq.drei1}. Since for each element $\om\in\P_j(a)$ the image $T_a^j(\om)$ 
is a union of monotonicity intervals $B_l(a)$, $1\le l\le p_0$, and since 
by property~(i) in Section~\ref{ss.prel} the boundary points of $B_l(a)$ are Lipschitz continuous in $a$ and $|B_l(a)|\ge\delta_0$,
we get that also \eref{eq.dreidist} and \eref{eq.drei2} are satisfied where we can take $C_2=\max\{L,\delta_0^{-1}\}$.
\end{proof}

\appendix
\section{}
\label{A:appendix}

\begin{Lem}
\label{l.app1}
Let $T_a:[0,1]\to[0,1]$, $a\in\I$, be a one-parameter family as described in Section~\ref{ss.prel}, satisfying 
properties (i)-(iii) and condition~(III). Disregarding a finite number of parameters in $\I$, we can cover $\I$ by a countable 
number of intervals $\tilde{\I}\subset\I$ such that on each interval $\tilde{\I}$ the following holds. 
There exists a constant $C=C(\tilde{\I})\ge1$ such that for each $a\in\tilde{\I}$ the density $\varphi_a$ of $\mu_a$ 
is bounded above by $C$ and, further, there exists an interval $J(a)\subset[0,1]$ of size $C^{-1}$ 
such that $\varphi_a$ restricted to $J(a)$ is bounded from below by $C^{-1}$.
\end{Lem}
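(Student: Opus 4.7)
The plan is to reduce to uniform spectral theory of the Perron--Frobenius operator $\mathcal{L}_a$ on combinatorially stable subintervals of $\I$. By condition~(III) together with properties~(i)--(iii), the symbolic structure of $T_a$ on $K(a)$ is locally constant away from a countable set of combinatorial transitions: the maps $\S_{a_1,a_2,j}$ preserve symbolic data, and both the partition points $b_k(a)$ and the endpoints of $K(a)$ vary Lipschitz continuously in $a$. Removing a finite set of degenerate parameters and subdividing further, I would cover $\I$ by countably many closed intervals $\tilde{\I}$ on each of which all the uniform dynamical constants ($\la$, $\La$, $L$, $p_0$, $p_1$, $\delta_0$), as well as a lower bound $\delta(\tilde{\I})>0$ on $\min\{|\om|:\om\in\P_{t_0}(a)\}$ for some fixed $t_0$, are preserved.

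On each $\tilde{\I}$ the iterated Lasota--Yorke inequality $\|\mathcal{L}_a^{t_0} f\|_{BV}\le\theta\|f\|_{BV}+M\|f\|_1$ holds with $0<\theta<1$ and $M<\infty$ depending only on the uniform quantities above. Iterating on $\mathbf{1}_{[0,1]}$ and passing to the Ces\`aro limit, which coincides with the unique a.c.i.p. density $\varphi_a$, gives a uniform bound $\|\varphi_a\|_\infty+\mathrm{Var}(\varphi_a)\le V:=M/(1-\theta)$; in particular the uniform upper bound $\|\varphi_a\|_\infty\le V$ on $\tilde{\I}$. For the existence of an interval $J(a)$ on which $\varphi_a$ is bounded below uniformly, I would use a pigeonhole--oscillation argument: partition $[0,1]$ into $K=\lceil 8V^2\rceil$ intervals of length $1/K$. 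Those on which $\mathrm{Var}(\varphi_a)>1/4$ number at most $4V$ and hence carry mass at most $4V\cdot V/K\le1/2$, so the remaining ``low-oscillation'' intervals carry total mass at least $1/2$; consequently at least one of them has mass $\ge 1/(2K)$, and so average density $\ge 1/2$. On this interval the essential oscillation of $\varphi_a$ is $\le 1/4$ and its average is $\ge 1/2$, whence $\varphi_a\ge 1/4$ almost everywhere; this interval serves as $J(a)$, and taking $C=\max\{V,K,4\}$ yields the claimed uniform constant.

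The main obstacle is the subdivision step: verifying that the combinatorial transition set in $\I$ is at most countable and that on each $\tilde{\I}$ the Lasota--Yorke constants can be chosen uniformly. Condition~(III) provides the combinatorial monotonicity needed to confine transitions to a locally finite set, while properties~(i)--(iii) uniformise the analytic constants across $\tilde{\I}$; without these the constants $\theta$ and $M$ in the Lasota--Yorke inequality could degenerate as $a$ varies. Once this uniformity is secured, the upper bound is a classical consequence of Lasota--Yorke theory and the pigeonhole argument for the lower bound is elementary.
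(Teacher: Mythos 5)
Your proposal follows essentially the same route as the paper: establish a Lasota--Yorke (bounded-variation) estimate on the invariant density with constants controlled by a lower bound $\delta(a)$ on the size of the $\tau$-th order partition elements, reduce to a countable cover of $\I$ on which $\delta(a)$ is uniformly bounded below, and finish with a pigeonhole argument that extracts an interval on which the density is bounded away from zero. Your oscillation/pigeonhole argument (low-variation blocks carry at least half the mass, one of them has average $\ge 1/2$ and oscillation $\le 1/4$) is the same in spirit as the paper's, just with different bookkeeping of constants. The one place where your proposal is materially thinner than the paper is the subdivision step, which you yourself flag as the main obstacle: you assert that combinatorial transitions occur on a countable set and that $\delta(\tilde I)>0$ can be secured locally, but do not prove it. The paper devotes an inductive Claim to this, showing via properties~(i)--(iii) that the boundary points of $T_a^j(\om(a))$ vary continuously in $a$ for elements with fixed symbolic itinerary, and then uses condition~(III) to propagate the existence of such elements to $a\ge a_0$ with $|T_a^j(\om(a))|/\Lambda^j$ as a continuous lower bound for $\delta(a)$ — with only finitely many discontinuities — before covering. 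Without some argument of that kind, the uniformity of your Lasota--Yorke constants $\theta,M$ (equivalently of $\delta(\tilde I)$) is unjustified; so you have correctly identified the crux but left it unproved.
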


\begin{proof}
For each $a\in\I$ it follows from \cite{wong} p.496 line 5 and \cite{lasota} p.484 line 6, that the variation over
the unit interval of the density $\varphi_a$ is bounded above by a constant 
$$
C_v(a)=\frac{3}{\delta(a)(\la^\tau-3)},
$$ 
where the integer $\tau\ge1$ is chosen so large that
$3/\la^\tau<1$ and the number $\delta(a)$ is given by 
$$
\delta(a)=\min\{|\om|\ ;\ \om\in\P_\tau(a)\}>0.
$$
(In \cite{wong} and \cite{lasota} $\delta(a)$ is the minimal size of the monotonicity intervals for the map $T_a^\tau:[0,1]\to[0,1]$. 
But since the elements of $\P_\tau(a)$ are monotonicity intervals for the map $T_a^\tau:K(a)\to K(a)$, the constant $C_v(a)$ 
is greater or equal than the corresponding constant in \cite{wong} and \cite{lasota}.)

\begin{Claim}
For $j\ge1$, let $(s_0,...,s_{j-1})$ be a sequence of symbols $s_i\in\{1,...,p_1\}$, 
$0\le i<j$. If $a_0\in\I$ is a parameter value such that there exists an element $\om(a_0)\in\P_j(a_0)$ 
satisfying
$$
\ind_{a_0}(T_{a_0}^i(\om(a_0)))=s_i,\qquad0\le i<j,
$$
then there is a neighborhood $U$ of $a_0$ in $\I$ such that for all $a\in U$ there is an element 
$\om(a)\in\P_j(a)$ having the same combinatorics as $\om(a_0)$, i.e., $\ind_a(T_a^i(\om(a)))=s_i$, 
$0\le i<j$. Furthermore, the boundary points of  $T_a^j(\om(a))$ depend continuously on $a\in U$.
\end{Claim}

\begin{proof}
We prove the claim by induction over $j\ge1$. We do not make use of condition~(III). 
For $j=1$ the elements in $\P_1(a)$ corresponding to the symbols $s_0\in\{1,...,p_1\}$ 
are the intervals $D_k(a)$, $1\le k\le p_1$. The boundary points of these intervals are, by property~(iii), 
continuous functions on $\I$. 
Using properties~(i) and (ii), one can easily show that the boundary points of $T_a(D_k(a))$ are continuous 
on $\I$. Now, assume that the statement holds for some $j\ge1$. Fix a sequence $(s_0,...,s_j)$ of symbols in 
$\{1,...,p_1\}$. Let $a_0\in\I$ be a parameter such that there exists an element $\om(a_0)\in\P_{j+1}(a_0)$ 
satisfying $\ind_{a_0}(T_{a_0}^i(\om(a_0)))=s_i$, for all $0\le i<j+1$ (if there is no such a parameter $a_0$ 
for which the element $\om(a_0)$ exists then there is nothing to show). 
Let $\ome(a_0)\in\P_j(a_0)$ be the element containing $\om(a_0)$. By the induction 
assumption there exists a neighborhood $V$ of $a_0$ in $\I$ such that for all $a\in V$ there is an element 
$\ome(a)\in\P_j(a)$ having the same combinatorics as $\ome(a_0)$ and the boundary points of $\ome(a)$ 
and $T_a^j(\ome(a))$ depend continuously on $a\in V$. Note that if $y(a_0)$ is a boundary point of 
$T_{a_0}^j(\om(a_0))$ then it is either equal to a partition point $b_k(a_0)$, $0\le k\le p_0$, or it is a boundary point of 
$T_{a_0}^j(\ome(a_0))$. By the continuity of the boundary points of $T_a^j(\ome(a))$ on $V$ and the continuity of 
$a\mapsto b_k(a)$, we deduce that there exists a neighborhood $U\subset V$ of $a_0$ in $\I$ such 
that for each $a\in U$ there exists an element $\om(a)\in\P_{j+1}(a)$ having the same combinatorics as $\om(a_0)$. 
Since the boundary points of $T_a^j(\om(a))$ are continuous on $U$, we can once more apply properties~(i) and (ii) to 
deduce that also the boundary points of $T_a^{j+1}(\om(a))$ are continuous on $U$. 
\end{proof}

Let $(s_0,...,s_{j-1})$ be a sequence of symbols $s_i\in\{1,...,p_1\}$. If for a parameter $a_0\in\I$ there exists 
an element $\om(a_0)\in\P_j(a_0)$ which corresponds to this sequence of symbols then, by condition~(III), 
for each $a\ge a_0$ there exists an element $\om(a)\in\P_j(a)$ corresponding to this sequence of symbols. 
Furthermore, $|T_a^j(\om(a))|/\Lambda^j$ is a lower bound for the size of this element which is by the claim above 
continuous in $a$. This implies that there is a map $\tilde{\delta}:\I\to(0,1]$ which is piecewise continuous with only 
a finite number of discontinuities and $\delta(a)\ge\tilde{\delta}(a)$.
Hence, disregarding a finite number of parameter values in $\I$, we can cover $\I$ by a countable number of intervals $\tilde{\I}\subset\I$ 
such that for each such interval $\tilde{\I}$ there is a constant $\delta_0=\delta_0(\tilde{\I})>0$ such that 
\begin{equation}
\label{eq.kaap}
\delta(a)\ge\delta_0,
\end{equation} 
for all $a\in\tilde{\I}$. It follows that that there is a constant $C_v=C_v(\tilde{\I})\ge1$ 
such that the variation of $\varphi_a$ is bounded from above by $C_v$ for all $a\in\tilde{\I}$. 
Since $\int_0^1\varphi_a(x)dx=1$, this immediately implies that $\varphi_a$ is bounded from above by $C_v+1$. 
To establish a lower bound on a subinterval of $K(a)$, we observe the following.

\begin{Claim}
If the variation over $[0,1]$ of a function $\varphi:[0,1]\to\real_+$
is bounded from above by a constant $C_v\ge1$, and if
$\int_0^1\varphi(x)dx=1$, then there exists an interval $J$ of length
$1/2C_v$ such that $\varphi(x)\ge1/3C_v$ for all $x\in J$.
\end{Claim}

\begin{proof}
Let $N=[2C_v]$, divide the unit interval into $N$ disjoint intervals
$J_1,...,J_N$ of length $1/N$, and, for $1\le l\le N$, set 
$m_l=\inf\{\varphi(x)\ ;\ x\in J_l\}$ and 
$M_l=\sup\{\varphi(x)\ ;\ x\in J_l\}$. Since $1=\int_0^1\varphi(x)dx\le\sum_{l=1}^NM_l/N$, it
follows that $N\le\sum_{l=1}^NM_l$. If $m_l<1/3C_v$, for all
$1\le l\le N$, it would follow that the variation of
$\varphi$ is strictly greater than $\sum_{l=1}^N(M_l-1/3C_v)\ge
N(1-1/3C_v)\ge C_v$, where the last inequality follows since
$C_v\ge1$. Hence, at least for one $1\le l\le N$, $m_l\ge1/3C_v$.
\end{proof}

Setting $C=3C_v$ this concludes the proof of Lemma~\ref{l.app1}.
\end{proof}

\section*{Acknowledgement}
I am grateful to M. Benedicks and K. Bjerkl\"ov for many fruitful discussions and for encouraging and supporting 
me during the writing of this paper. A part of this paper was written at the Institut Mittag-Leffler (Djursholm, Sweden).
I thank Tomas Persson for pointing out that 
in condition~(II) it is sufficient to require that the constant $C_2$ depends measurably on the parameter $a$ (cf. Remark~\ref{r.tomas}).

\end{document}